\documentclass[titlepage]{amsart}
\usepackage{amsaddr}


\newtheorem{pro}{Proposition}[section]
\newtheorem{teo}[pro]{Theorem}
\newtheorem{defi}[pro]{Definition}
\newtheorem{lem}[pro]{Lemma}
\newtheorem{cor}[pro]{Corollary}

\newtheorem{ex}[pro]{Example}

\newtheorem{prop}[pro]{Proposition}
\newtheorem{thm}[pro]{Theorem}
\newtheorem{defn}[pro]{Definition}

\newtheorem{example}[pro]{Example}


\newcommand{\Ext}{\mathrm{Ext}}
\newcommand{\Hom}{\mathrm{Hom}}

\newcommand{\A}{\mathcal{A}}
\newcommand{\B}{\mathcal{B}}

\newcommand{\C}{\mathcal{C}}

\newcommand{\T}{\mathcal{T}}
\newcommand{\U}{\mathcal{U}}
\newcommand{\V}{\mathcal{V}}
\newcommand{\X}{\mathcal{X}}
\newcommand{\Y}{\mathcal{Y}}
\newcommand{\Z}{\mathcal{Z}}

\newcommand{\pd}{\mathrm{pd}}

\newcommand{\id}{\mathrm{id}}

\newcommand{\resdim}{\mathrm{resdim}}

\newcommand{\coresdim}{\mathrm{coresdim}}
\newcommand{\add}{\mathrm{add}}
\newcommand{\smd}{\mathrm{smd}}
\newcommand{\Add}{\mathrm{Add}}

\newcommand{\modu}{\mathrm{mod}}

\newcommand{\Ker}{\mathrm{Ker}}
\newcommand{\Ima}{\mathrm{Im}}

\usepackage{latexsym,amssymb,amscd} 
\usepackage{amsmath}
\usepackage[all]{xypic}
\usepackage{amsthm}
\usepackage{xargs}[2008/03/08] 

\newcommandx\suc[5][usedefault, addprefix=\global, 1=N, 2=M, 3=K, 4=, 5=]{0\rightarrow#1\overset{#4}{\rightarrow}#2\overset{#5}{\rightarrow}#3\rightarrow0}
\newcommandx\p[2][usedefault, addprefix=\global, 1=\mathcal{A}, 2=\mathcal{B}]{\left(#1,#2\right)}
\newcommandx\pdr[2][usedefault, addprefix=\global, 1=\mathcal{A}, 2=M]{\mathrm{pd}{}_{#1}\left(#2\right)}
\newcommandx\idr[2][usedefault, addprefix=\global, 1=\mathcal{B}, 2=M]{\mathrm{id}{}_{#1}\left(#2\right)}
\newcommandx\Extx[4][usedefault, addprefix=\global, 1=i, 2=\mathcal{C}, 3=M, 4=X]{\mathrm{Ext}{}_{#2}^{#1}\left(#3,#4\right)}
\newcommandx\Injx[1][usedefault, addprefix=\global, 1=R]{\operatorname{Inj}\left(#1\right)}
\newcommandx\Projx[1][usedefault, addprefix=\global, 1=R]{\operatorname{Proj}\left(#1\right)}
\newcommandx\smdx[1][usedefault, addprefix=\global, 1=\mathcal{M}]{\operatorname{smd}\left(#1\right)}
\newcommandx\Kerx[1][usedefault, addprefix=\global, 1=M]{\operatorname{Ker}\left(#1\right)}
\newcommandx\Homx[3][usedefault, addprefix=\global, 1=\mathcal{C}, 2=M, 3=N]{\mathrm{Hom}{}_{#1}(#2,#3)}
\newcommandx\End[2][usedefault, addprefix=\global, 1=R, 2=M]{\mathrm{End}{}_{#1}(#2)}
\newcommandx\pdx[1][usedefault, addprefix=\global, 1=M]{\operatorname{pd}\left(#1\right)}
\newcommandx\Cok[1][usedefault, addprefix=\global, 1=M]{\operatorname{Coker}\left(#1\right)}
\newcommandx\Gen[1][usedefault, addprefix=\global, 1=M]{\operatorname{Fac}_{1}\left(#1\right)}
\newcommandx\Genn[2][usedefault, addprefix=\global, 1=M, 2=n]{\operatorname{Fac}_{#2}(#1)}
\newcommandx\Gennr[3][usedefault, addprefix=\global, 1=\mathcal{T}, 2=n, 3=\mathcal{X}]{\operatorname{Fac}_{#2}^{#3}(#1)}
\newcommandx\addx[1][usedefault, addprefix=\global, 1=\mathcal{M}]{\operatorname{add}\left(#1\right)}
\newcommandx\Modx[1][usedefault, addprefix=\global, 1=R]{\operatorname{Mod}\left(#1\right)}
\newcommandx\modd[1][usedefault, addprefix=\global, 1=R]{\operatorname{mod}\left(#1\right)}
\global\long\def\im#1{\operatorname{Im}\left(#1\right)}
\global\long\def\resdimr#1#2#3{\mathrm{resdim}{}_{#1}^{#3}\left(#2\right)}
\global\long\def\coresdimr#1#2#3{\mathrm{coresdim}{}_{#1}^{#3}\left(#2\right)}
\global\long\def\resdimx#1#2{\operatorname{resdim}_{#1}\left(#2\right)}
\global\long\def\coresdimx#1#2{\operatorname{coresdim}_{#1}\left(#2\right)}

\usepackage{tikz}
\usepackage{pgfplots}
\usetikzlibrary{matrix,arrows,positioning,calc}
\usepackage[all]{xy}
\usepackage{amssymb,amsmath}
\relpenalty=9999
\binoppenalty=9999
\usetikzlibrary{positioning}
\usetikzlibrary{backgrounds}
\usepackage{enumitem}
\setenumerate{ label= (\alph*)}
\setenumerate[2]{ label= (\alph{enumi}\arabic*)} 


\usepackage[Lenny]{fncychap}
\usepackage{amscd}

\usepackage{etoolbox}

\makeatletter
\patchcmd{\@maketitle}
{\if@titlepage \newpage \else}
{\if@titlepage
 \vspace{\baselineskip}
 \else}
{}{}
\makeatother

\begin{document}

\title[Relative Tilting Theory I]{Relative tilting theory in abelian categories I: Auslander-Buchweitz-Reiten approximations theory in subcategories and cotorsion-like pairs}

\author{Alejandro Argud\'in-Monroy}
\address{Centro de Ciencias Matem\'aticas, Campus Morelia, Universidad Nacional
Autónoma de M\'exico, Antigua Carretera a Pátzcuaro 8701, Colonia Ex Hacienda
San Jos\'e de la Huerta, Morelia, 58089 Michoac\'an, MEXICO.}

\author{Octavio Mendoza Hern\'andez}
\address{Instituto de Matem\'aticas, Universidad Nacional Aut\'onoma de M\'exico.  
Circuito Exterior, Ciudad Universitaria, 
C.P. 04510, CDMX,  MEXICO.}

\email[A1,A2]{argudin@ciencias.unam.mx, omendoza@matem.unam.mx}

\thanks{2020 {\it{Mathematics Subject Classification}}. Primary 18G20, 16E10. Secondary 18E10, 18G25.\\
\it{Key Words:}  Cotorsion pairs,  homological dimensions, Auslander-Buchweitz-Reiten theory.\\
\indent {\em Funding: } This work was supported by the Projects PAPIIT-Universidad Nacional Aut\'onoma de M\'exico IN100520 and IN100124. The first named author was supported by a postdoctoral fellowship from Programa
de Becas Posdoctorales en la UNAM (POSDOC), Direcci\'on General de Asuntos del Personal Acad\'emico,
Universidad Nacional Aut\'onoma de M\'exico.
 }

\begin{abstract} 
In this paper we introduce a special kind of relative (co)resolutions 
associated to a pair of classes of objects in an abelian category $\C.$ We will see
that, by studying  this relative (co)resolutions, we get a possible generalization of a part of the Auslander-Buchweitz approximation theory that is useful for developing $n$-$\X$-tilting theory in \cite{Argudin-Mendoza2}. With this goal, new
concepts as $\mathcal{X}$-complete and $\mathcal{X}$-hereditary
pairs are introduced as a generalization of complete and hereditary
cotorsion pairs. These pairs appear in a natural way in the study of the category of  representations of a quiver in an abelian category \cite{Argudin-Mendoza3}.
Our main results will include an existence theorem  for relative approximations, among other results related with closure properties of relative (co)resolution classes and relative homological dimensions which are essential in the development of $n$-$\X$-tilting theory in \cite{Argudin-Mendoza2}.
\end{abstract}  
\maketitle

\setcounter{tocdepth}{1}
\tableofcontents

\section{Introduction}

Broadly speaking, tilting modules over a ring were born with the purpose of answering
questions, about the ring we are working on, through the endomorphisms ring
of such tilting module. With this idea, tilting modules have become an important
tool in different areas like Representation theory,  homological algebra and theory of categories \cite{hugel2007handbook}.

In the last 40 years, tilting theory has been generalized in different
ways and contexts. Namely, it has been generalized from finitely generated
\cite{happel1982tilted} to infinitely generated modules \cite{Tiltinginfinitamentegenerado};
from finite \cite{miyashita} to infinite projective dimension \cite{positselski2019tilting};
from categories of modules to abstract categories \cite{positselskicorrespondence}.
All these generalizations give rise to a family of different tilting definitions with different
properties and objectives. 
\

In this long history of exploring and expanding tilting theory, we
would like to mention the contribution made by M. Auslander and
I. Reiten in \cite{auslandereiten, auslander1992homologically}.
They were interested in the study of covariantly and contravariantly
finite subcategories in the category $\modu(\Lambda)$ of finitely generated left $\Lambda$-modules, where $\Lambda$ is an
Artin algebra. One of the main results of such papers is that they
found a bijection between tilting $\Lambda$-modules and covariantly
finite categories: \emph{the Auslander-Reiten correspondence} \cite[Thm. 4.4]{auslander1992homologically}.
By using this bijection,  M. Auslander and I. Reiten showed the utility of the
tilting objects for studying covariantly finite subcategories and
vice versa. 
\

This manuscript is  the first of two  forthcoming papers and is devoted to develop certain foundational
aspects needed for the settle and the study of a relative tilting theory on abelian
categories following the same philosophy of M. Auslander and I. Reiten
in \cite{auslandereiten,auslander1992homologically}.
We shall be based mainly 
in
 the first part of the homological algebra presented by M.
Auslander and R.O. Buchweitz in \cite{Auslander-Buchweitz}.
We can describe such theory as the study of the (co)resolution
dimension over a class $\mathcal{X},$  the relative
projective (injective) dimension on $\mathcal{X},$ some closure properties of certain classes and the existence of $\X$-precovers or $\X$-preenvelopes.  In this paper,
we will introduce a subtle modification on the studied (co)resolutions
which will lead us to new definitions and results
on relative homological algebra that are useful for the development of $n$-$\X$-tilting theory in \cite{Argudin-Mendoza2} and the study of representations of quivers in abelian categories \cite{Argudin-Mendoza3}.
\

 In the second forcoming paper  \cite{Argudin-Mendoza2}, by using the fundaments developed in this first work, we set and study a  theory
of relative tilting classes in abelian categories. The idea of this theory is that, if we have a class $\X$ in an abelian category $\C$, then we can use the structure of $\C$ to define a tilting subcategory $\T$ in $\C$ which is related with $\X,$ that is the so-called $n$-$\X$-tilting subcategory. Notice that, in general, $\T$ has not to be contained in $\X$ and thus this leads us to study cotorsion-like pairs introduced in section 3 of this first paper.  We will show, in the second paper, 
that this work offers a unified framework of different previous notions
of tilting, ranging from Auslander-Solberg relative tilting modules
on Artin algebras to infinitely generated tilting modules on arbitrary
rings.  With this new approach, we will review Bazzoni's
tilting characterization, relative homological dimensions on the induced
tilting classes, and parametrise certain cotorsion-like pairs. As an example, we will show how the tilting theory
in exact categories built this way, coincides with tilting objects
in extriangulated categories introduced recently by 
B. Zhu
 and
  X. Zhuang
    \cite{zhu2019tilting}. It is worth mentioning that a relative tilting theory  was recently  presented by 
  P. Moradifar
     and
 S. Yassemi
       in \cite{moradifar2021infinitely} with the goal of  studying infinitely generated Gorenstein tilting objects. We believe that our work will be a complementary tool for this research line.
\

Let us describe briefly  the contents and main results of this first paper. Section
2 is devoted to introduce some categorical and homological preliminaries
in an arbitrary abelian category $\C$. In particular, we
shall recall the definition of homological dimensions relative to
a class $\X\subseteq\C$. Namely, for a class $\T\subseteq\C,$ $\pd_\X(\T)$ denotes the $\mathcal{X}$-projective dimension of $\T$ and $\id_\X(\T)$ denotes the $\mathcal{X}$-injective dimension of $\T.$ Furthermore, in
Proposition \ref{prop:M ortodonal cerrado por n-cocientes sii pdM=00003Dn}  we  characterize the classes $\mathcal{T}\subseteq\mathcal{C}$
such that $\pd_\X(\T)\leq n$ through a property
called \emph{closed by $n$-quotients in $\mathcal{X}$}. We also present the definition of $n$-$\X$-cluster tilting in the abelian category $\C,$ which is a generalization of the $n$-cluster tilting category given by O. Iyama in \cite{iyama2011cluster}, and study such categories from the point of view of $n$-quotients in 
$\mathcal{X}\subseteq\C.$
\

In Section 3, we present and study new notions that will help us to state
a relative Auslander-Reiten correspondence in the second forcoming paper, see 
 \cite[Cors. 3.49 and 3.50]{Argudin-Mendoza2}. 
Namely, we shall introduce
the notions of \emph{$\X$-complete} and \emph{$\X$-hereditary} pairs. These concepts are generalizations of the complete 
and hereditary cotorsion pairs with the difference that the above-mentioned
pairs do not need to be cotorsion pairs. We point out that $\X$-complete and $\X$-hereditary pairs appear in a natural way in \cite{Argudin-Mendoza2} and are of the form $({}^\perp(\T^\perp), \T^\perp)$ for $\T$ an $n$-$\X$-tilting subcategory of $\C.$ Notice that we do not know if $({}^\perp(\T^\perp), \T^\perp)$ is a cotorsion pair and a particular case when this occur can be seen in Corollary \ref{Tp=cp}. Other context where the $\X$-complete and $\X$-hereditary pairs appear in a natural way is in the study of quiver representations in abelian categories, see 
\cite[Thms. 5.16 and 5.17 and Cor. 5.18]{Argudin-Mendoza3}.
\

Section 4 is devoted to develop our main goals. We introduce the type of relative (co)resolutions that interests us
and that will be useful for the development of the theory related with some special
classes, their closure properties and the relationship between different relative homological dimensions. Namely, let
 $\X,\Y\subseteq\C$ be classes of objects in an abelian category $\mathcal{C}.$ We introduce, see Definition \ref{def: res cores},
$\Y_\X$-(co)resolutions and the respective (co)resolution dimension associated
to the pair $(\mathcal{X},\mathcal{Y})$. These relative (co)resolutions give rise to the relative coresolution classes 
$(\X,\Y)^\vee_{\infty}$ and $(\X,\Y)^\vee,$ and to the relative resolution classes $(\X,\Y)^\wedge_{\infty}$ and $(\X,\Y)^\wedge.$ We point out that these classes already appeared as particular cases in classical tilting theory and Gorenstein homological algebra. Of course, in these particular situations, some closure properties has been studied. In the general case, we studied in this paper their closure properties in 
Theorem \ref{thm:teo nuevo}, Theorem \ref{thm:el reemplazo } and Corollary \ref{cor:coronuevo}. These closure properties  of the aforementioned  relative classes play an important role in the study and development of $n$-$\X$-tilting theory in the second forthcoming paper \cite{Argudin-Mendoza2}. On the other hand, Theorem \ref{thm:(5.4homologia_relativa)-1} shows the existence of the main approximations that we will be using in the case of relative coresolution classes. This theorem is a possible generalization of the dual
result of \cite[Thm 1.1]{Auslander-Buchweitz} and will play an important role in the development of
$n$-$\X$-tilting theory in \cite{Argudin-Mendoza2}. For example, it is fundamental in the proof that the pair
$({}^\perp(\T^\perp), \T^\perp)$ is $\X$-complete if $\T$ is an $n$-$\X$-tilting class in $\C.$ Finally, since the aforementioned pair is $\X$-complete and $\X$-hereditary, it has sense to study the relationship between the different relative homological dimensions related to any $\X$-complete and $\X$-hereditary pair $(\A,\B)$ in $\C.$ The main results obtained in this direction are Proposition \ref{pro:(5.7homologia_relativa)} and Theorem \ref{thm:(5.8homologia_relativa)} that will play an important role in the development
of the $n$-$\X$-tilting theory in \cite{Argudin-Mendoza2}.
\

Let $\mathcal{X}$ and $\mathcal{T}$ be classes of objects in an abelian category $\C.$ In Section
5, we  study a new class of objects $C\in\mathcal{C}$ admitting
an exact sequence 
$$0\to K\to T_n\xrightarrow{f_n} T_{n-1}\to\cdots\to T_2\xrightarrow{f_2} T_1\xrightarrow{f_1}C\to 0,$$
with $T_{i}\in\mathcal{T}\cap\mathcal{X}$ and $\Ker(f_{i})\in\X$
$\forall i\in[1,n]$. This is a generalization of the class
of modules $n$-generated by a given module, which were introduced
by 
S. Bazzoni
 \cite{Bazzonintilting} and 
 J. Wei
  \cite{Wei}
as a tool in the characterization of tilting modules. The goal of this section is to review some basic properties of such class that will be
used in \cite{Argudin-Mendoza2} to characterize when a class $\T$ in $\C$ is  $n$-$\X$-tilting.
\

It is worth 
pointing
 out that we will be working in abstract abelian
categories without assuming the existence of enough projectives or
injectives. Furthermore, we will incorporate examples where our theory
can be exploited. Namely, we shall include examples on $n$-cluster
tilting categories, $\mathcal{FP}_{n}$ objects and relative Gorenstein
objects among others.
\section{Preliminaries }

\subsection{Notation }

Throughout the paper,  $\mathcal{C}$ denotes an abelian category. We will
use the Grothendieck's notation \cite{Ab} to distinguish abelian
categories with further structure. Namely, \textbf{AB3 } (if it has coproducts), \textbf{AB4 } (if it is AB3 and the coproduct functor
is exact) and \textbf{AB5 } (if it is AB3 and the direct limit functor is exact).

Given $X,Y\in\mathcal{C}$ and $n\geq0,$ we will consider the $n$-th
Yoneda extensions group $\Ext^n_{\C}(X,Y)$ \cite[Chap. VII]{mitchell}
 and the bi-functor $\Ext^n_{\C}(-,-):\C^{op}\times\mathcal{C}\to\mbox{Ab}.$ 
In particular, if $\mathcal{C}$ has enough projectives or injectives,
this bi-functor coincides with the $n$-derived functor of the Hom functor.
For any class of objects $\mathcal{X}\subseteq\mathcal{C}$ and any $i\geq 1,$ we define  the right $i$-th orthogonal class
$\X^{\perp_i}:=\{C\in\C\;:\;\Ext^i_{\C}(-,C)|_{\X}=0\}$ and the right orthogonal class $\X^{\perp}:=\cap_{i>0}\;\X^{\perp_i}$ 
of $\X.$ Dually, we have the left $i$-th orthogonal class ${}^{\perp_i}\X$ and the left orthogonal class ${}^{\perp}\X$ of $\X.$ 
\

With respect to inclusions of classes and objects, $\mathcal{M}\subseteq\mathcal{C}$  means
that $\mathcal{M}$ is a class of objects of $\mathcal{C},$ and $C\in\mathcal{C}$ means that $C$ is an object
of $\mathcal{C}$. In the case we are given another class of objects
$\mathcal{N\subseteq\mathcal{C}}$, then $\mathcal{N}\subseteq\mathcal{M}$
and $C\in\mathcal{M}$ have similar meanings. Finally, the term subcategory means full subcategory.
\

Associated with some $\mathcal{M}\subseteq\C,$ we have the following classes of objects in  $\C.$ Namely: the class 
$\smd(\mathcal{M})$ whose objects are all the direct summands of objects in $\mathcal{M};$ the class $\mathcal{M}^{\oplus}$ 
($\mathcal{M}^{\oplus_{<\infty}}$) of all the  (finite) coproducts of objects in $\mathcal{M};$ $\Add(\mathcal{M}):=\smd(\mathcal{M}^{\oplus})$ and $\add(\mathcal{M}):=\smd(\mathcal{M}^{\oplus_{<\infty}}).$ In case $\mathcal{M}=\{M\},$ 
we have $M^{\oplus}:=\mathcal{M}^{\oplus},$
$M^{\oplus_{<\infty}}:=\mathcal{M}^{\oplus_{<\infty}},$ $\smd(M):=\smd(\mathcal{M}),$
$\Add(M):=\Add(\mathcal{M}),$ $\add(M):=\add(\mathcal{M}),$ $M^{\bot}:=\mathcal{M^{\bot}},$
and $^{\bot}M:={}^{\bot}\mathcal{M}.$
\

Throughout this paper, we will work with a variety of concepts along
with its dual notions. We will omit writing down dual results and notions, but
we will be using both of them. 

\subsection{Relative homological dimensions}

We will be using the following known results  in homological algebra and its duals. 

\begin{teo} \cite[Chap. VI, Thm. 5.1]{mitchell} Let $0\to N\to M\to K\to 0$ be an exact
sequence in the abelian category $\C$. Then, for any $X\in\C,$  there is a long exact sequence of abelian groups induced
by $\Hom_\C(X,-)$
\begin{align*}
0 & \rightarrow\Hom_\C(X,N)\rightarrow\Hom_\C(X,M)\rightarrow\Hom_\C(X,K)\rightarrow\Ext^1_\C(X,N)\rightarrow\cdots\\
\cdots & \rightarrow\Ext^k_\C(X,N)\rightarrow\Ext^k_\C(X,M)\rightarrow\Ext^k_\C(X,K)\rightarrow\Ext^{k+1}_\C(X,N)\rightarrow\cdots\mbox{;}
\end{align*}
\end{teo}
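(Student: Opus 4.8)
The plan is to work directly with Yoneda's description of $\Ext^n_\C(X,Y)$ as equivalence classes of $n$-fold exact sequences $0\to Y\to E_n\to\cdots\to E_1\to X\to 0$, since we are not assuming enough projectives or injectives and therefore cannot compute these groups from a resolution. On such classes there are three natural operations: the pushforward $\alpha_*$ along a morphism $\alpha\colon Y\to Y'$ (pushout of the leftmost term), the pullback $\beta^*$ along $\beta\colon X'\to X$ (pullback of the rightmost term), and the Yoneda splice, which composes an $m$-fold extension of $Y$ by $Z$ with an $n$-fold extension of $X$ by $Y$ into an $(m+n)$-fold extension of $X$ by $Z$. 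I would begin by recording that all three are well defined on equivalence classes and that Baer sum makes each $\Ext^n_\C(X,Y)$ an abelian group.

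Next I would exhibit the maps of the sequence. Writing $i\colon N\to M$ and $p\colon M\to K$ for the given maps, the horizontal arrows $\Ext^k_\C(X,N)\to\Ext^k_\C(X,M)\to\Ext^k_\C(X,K)$ are the pushforwards $i_*$ and $p_*$. The connecting map $\Ext^k_\C(X,K)\to\Ext^{k+1}_\C(X,N)$ is left Yoneda multiplication by the class $\xi\in\Ext^1_\C(K,N)$ of $0\to N\to M\to K\to 0$: a representative $0\to K\to E_k\to\cdots\to E_1\to X\to 0$ is sent to $0\to N\to M\to E_k\to\cdots\to E_1\to X\to 0$, the map $M\to E_k$ being the composite of $p$ with $K\hookrightarrow E_k$. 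For the low-degree terms, left exactness of $\Hom_\C(X,-)$ gives exactness of $0\to\Hom_\C(X,N)\to\Hom_\C(X,M)\to\Hom_\C(X,K)$, and the first connecting map sends $f\colon X\to K$ to the pullback $f^*\xi\in\Ext^1_\C(X,N)$.

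The core of the argument is exactness at each interior node, for which I would use the criterion that a class in $\Ext^n_\C(X,-)$ vanishes exactly when a representing extension is equivalent to a splice of split short exact sequences. The composites vanish for formal reasons: $p_*i_*=(pi)_*=0$, while both composites involving the connecting map vanish because splicing $\xi$ onto a class pushed forward from $M$, or pushing a splice forward along $i$, makes $0\to N\to M\to K\to 0$ reappear as a split piece. For the reverse inclusions I would proceed node by node: at $\Ext^k_\C(X,M)$ a class whose $p_*$-image is trivial factors through $i$ by the universal property of the pushout; at $\Ext^k_\C(X,K)$ a class whose splice with $\xi$ is trivial lifts along $p$; and at $\Ext^{k+1}_\C(X,N)$ a class annihilated by $i_*$ is displayed as a splice by cutting off its first term. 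Each step is a pullback/pushout diagram chase relying on the universal properties and on the naturality of the three operations.

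I expect these exactness verifications at the $\Ext$ positions to be the main obstacle, since, lacking injective coresolutions, there is no single global dimension shift at our disposal. The device I would use to control them is the splice decomposition: any $k$-fold extension factors as a one-fold extension $0\to Z\to E_1\to X\to 0$ followed by a $(k-1)$-fold extension ending in $Z$, which lets one propagate exactness from low degrees using associativity and naturality of the Yoneda product. Carrying out this reduction while keeping careful track of the identifications induced by splicing is the only genuinely delicate point; the remaining verifications are routine.
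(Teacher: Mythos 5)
The paper offers no proof of this statement at all: it is imported verbatim from Mitchell's book, and the proof in that cited source is exactly the Yoneda-extension argument you outline (pushforward along $i$ and $p$, connecting map given by splicing with the class $\xi\in\Ext^1_\C(K,N)$ of the given sequence, vanishing of composites via the split pullback $p^*\xi$ and split pushout $i_*\xi$, and node-by-node exactness using the characterization of the zero class). Your plan therefore coincides with the source's approach and is correct as a roadmap, with the one caveat that the three ``reverse inclusion'' verifications you defer to pullback/pushout diagram chases --- especially showing that $i_*\eta=0$ forces $\eta$ to factor as $\xi\circ\theta$ --- are where essentially all of the work in the cited proof actually lives.
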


\begin{lem} [Shifting Lemma]\label{lem:recorrimiento de dimension} Let $0\to K  \to C_{n-1}\to\cdots\to C_{1}\to C_{0}\to A\to 0$ be an exact sequence in the abelian category $\C$ such that  $C_{i}\in{}^{\bot}Y$ $\forall i\in[0,n-1],$ where $Y\in\C.$ Then, 
$\Ext^k_\C(K,Y)\cong\Ext^{k+n}_\C(A,Y)\,\forall k\geq 1.$
\end{lem}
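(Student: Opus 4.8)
The plan is to reduce the statement to the case of a single short exact sequence and then iterate by dimension shifting. Set $K_0:=A$ and $K_n:=K$, and for $1\le i\le n-1$ let $K_i:=\Ima(C_i\to C_{i-1})$. Then the given exact sequence decomposes into the short exact sequences $0\to K_i\to C_{i-1}\to K_{i-1}\to 0$ for every $i\in[1,n]$, each having middle term $C_{i-1}\in{}^{\perp}Y$.

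First I would settle the one-step case, namely a short exact sequence $0\to K'\to C\to A'\to 0$ with $C\in{}^{\perp}Y$. Applying the contravariant long exact sequence of $\Ext$-groups associated to $\Ext^{\bullet}_\C(-,Y)$ (the dual of the Theorem quoted above), one obtains for each $k\geq 1$ the exact piece $\Ext^k_\C(C,Y)\to\Ext^k_\C(K',Y)\to\Ext^{k+1}_\C(A',Y)\to\Ext^{k+1}_\C(C,Y)$. Since $C\in{}^{\perp}Y$, both end terms $\Ext^k_\C(C,Y)$ and $\Ext^{k+1}_\C(C,Y)$ vanish for all $k\geq 1$, so the connecting morphism $\Ext^k_\C(K',Y)\to\Ext^{k+1}_\C(A',Y)$ is an isomorphism. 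This yields $\Ext^k_\C(K',Y)\cong\Ext^{k+1}_\C(A',Y)$ for all $k\geq 1$.

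Applying this to each short exact sequence $0\to K_i\to C_{i-1}\to K_{i-1}\to 0$ gives an isomorphism $\Ext^{k}_\C(K_i,Y)\cong\Ext^{k+1}_\C(K_{i-1},Y)$ valid for every $k\geq 1$. Chaining the $n$ resulting isomorphisms (each raising the cohomological degree by one, and always used at a degree $\geq 1$) produces $\Ext^k_\C(K,Y)=\Ext^k_\C(K_n,Y)\cong\Ext^{k+1}_\C(K_{n-1},Y)\cong\cdots\cong\Ext^{k+n}_\C(K_0,Y)=\Ext^{k+n}_\C(A,Y)$, which is the claimed isomorphism. Equivalently, the chaining may be organized as an induction on $n$, with the base case being the single short exact sequence treated above.

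The argument is formal dimension-shifting, so there is no essential obstacle; the only two points requiring care are invoking the correct \emph{contravariant} form of the long exact sequence (the quoted Theorem is stated in its covariant form with $\Hom_\C(X,-)$, and here its dual is needed), and verifying that every degree at which the one-step isomorphism is invoked stays $\geq 1$, which holds because $k\geq 1$ and the degree only increases along the chain.
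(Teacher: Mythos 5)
Your proof is correct and is essentially the argument the paper has in mind: the paper simply defers to \cite[Chapter VI, Lemma 6.2]{mitchell}, whose proof is exactly this decomposition into short exact sequences followed by degree-shifting via the contravariant long exact sequence of $\Ext$ with vanishing end terms. Your two points of care (using the dual, contravariant form of the quoted Theorem, and checking all degrees stay $\geq 1$) are both handled correctly, so nothing is missing.
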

\begin{proof}
It can be proved in a similar way as in \cite[Chap. VII, Lem. 6.3]{mitchell}.
\end{proof}

\begin{teo} \label{prop:extn coprods arb}\cite[Prop. 4.2]{argudin2019yoneda}
Let $\mathcal{C}$ be an AB4 category, $\{A_{i}\}_{i\in I}$
be a family of objects in $\C$ and $B\in\mathcal{C}.$ Then, for any $n\geq1,$
there is a natural isomorphism 
\[
\Psi_{n}:\Ext^n_\C(\bigoplus_{i\in I}\, A_i, B)\rightarrow\prod_{i\in I}\Ext^n_\C(A_i,B).
\]
\end{teo}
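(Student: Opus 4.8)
The plan is to build $\Psi_n$ from the universal property of the coproduct and then exhibit an explicit inverse assembled out of coproducts of Yoneda $n$-extensions, using the $\mathbf{Ab4}$ hypothesis precisely to keep these coproducts exact. Writing $\iota_j\colon A_j\to\bigoplus_{i\in I}A_i$ for the structural inclusions, contravariant functoriality of $\Ext^n_\C(-,B)$ yields group homomorphisms $\iota_j^{*}\colon\Ext^n_\C(\bigoplus_{i}A_i,B)\to\Ext^n_\C(A_j,B)$, and I set $\Psi_n:=(\iota_j^{*})_{j\in I}$. Being the map into the product determined by a family of homomorphisms, $\Psi_n$ is a homomorphism of abelian groups, and since each $\iota_j^{*}$ commutes with the pushout maps $g_{*}$ induced by any $g\colon B\to B'$ (bifunctoriality of Yoneda Ext), $\Psi_n$ is natural in $B$.

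For the inverse I would proceed as follows. Given a family $(\xi_i)_{i\in I}$ with $\xi_i\in\Ext^n_\C(A_i,B)$ represented by an exact sequence $0\to B\to X^i_{n-1}\to\cdots\to X^i_0\to A_i\to 0$, I form the coproduct of these sequences over $i$. By $\mathbf{Ab4}$ the coproduct functor is exact, so (splitting the $n$-fold sequences into short exact ones if one wishes) the sequence $0\to\bigoplus_i B\to\bigoplus_i X^i_{n-1}\to\cdots\to\bigoplus_i X^i_0\to\bigoplus_i A_i\to 0$ is again exact and hence represents a class in $\Ext^n_\C(\bigoplus_i A_i,\bigoplus_i B)$. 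Pushing this class out along the fold morphism $\nabla\colon\bigoplus_i B\to B$ (the map restricting to $\id_B$ on each summand) produces an element of $\Ext^n_\C(\bigoplus_i A_i,B)$, and I define $\Phi_n((\xi_i)_i)$ to be this element. Because the coproduct of objects is a functor it carries elementary morphisms of $n$-extensions to elementary morphisms, hence respects the zig-zag relation generating Yoneda equivalence; combined with the well-definedness of pushout on equivalence classes, this shows $\Phi_n$ is independent of the chosen representatives.

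It then remains to check $\Psi_n\circ\Phi_n=\id$ and $\Phi_n\circ\Psi_n=\id$. For the first, I would use two standard naturality facts for Yoneda Ext: pullback commutes with pushout, and every morphism of $n$-extensions that covers $\gamma$ on the right and $\alpha$ on the left realizes $\gamma^{*}(\text{target})=\alpha_{*}(\text{source})$. Applied to the canonical inclusion $\xi_j\to\bigoplus_i\xi_i$ (covering the coproduct inclusion $\iota_j^{B}\colon B\to\bigoplus_i B$ on the left and $\iota_j$ on the right) this gives $\iota_j^{*}\big(\bigoplus_i\xi_i\big)=(\iota_j^{B})_{*}\xi_j$; since $\nabla\circ\iota_j^{B}=\id_B$, one obtains $\iota_j^{*}\Phi_n((\xi_i)_i)=\nabla_{*}\,(\iota_j^{B})_{*}\,\xi_j=\xi_j$. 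For the second, starting from a representative $0\to B\to X_{n-1}\to\cdots\to X_0\to\bigoplus_i A_i\to 0$ of $\eta$, the canonical morphisms $\iota_i^{*}\eta\to\eta$ (covering $\iota_i$ on the right and $\id_B$ on the left) assemble, by the universal property of the coproduct, into a single morphism $\bigoplus_i\iota_i^{*}\eta\to\eta$ whose right-hand component is $\id_{\bigoplus_i A_i}$ (the map agreeing with each structural inclusion $\iota_i$) and whose left-hand component is $\nabla$; by the same pullback/pushout rule this exhibits $\eta=\nabla_{*}\big(\bigoplus_i\iota_i^{*}\eta\big)=\Phi_n(\Psi_n(\eta))$.

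The main obstacle is not the construction of the two maps but the bookkeeping at the level of Yoneda equivalence classes: one must check that forming coproducts of $n$-fold extensions is compatible with the equivalence relation generated by zig-zags of elementary morphisms, and that the two composites above are equalities of classes rather than merely of chosen representatives. The role of $\mathbf{Ab4}$ is isolated and essential—it is exactly what guarantees that the (possibly infinite) coproduct of exact sequences remains exact, so that $\Phi_n$ lands in $\Ext^n_\C(\bigoplus_i A_i,B)$ in the first place.
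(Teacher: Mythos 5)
The first thing to note is that the paper contains no proof of this statement: it is imported verbatim from \cite[Theorem 3.12]{argudin2019yoneda}, so there is no internal argument to compare against. Your overall strategy --- define $\Psi_{n}$ by restriction along the coproduct inclusions, and attempt to invert it by taking the coproduct of representing $n$-extensions (exact precisely because $\mathcal{C}$ is Ab4) followed by pushout along the codiagonal $\nabla\colon\bigoplus_{i}B\to B$ --- is exactly the strategy of that reference, and your two composite computations are sound: the rule $\gamma^{*}(\mathrm{target})=\alpha_{*}(\mathrm{source})$ applied to the inclusion $\xi_{j}\to\bigoplus_{i}\xi_{i}$ gives $\Psi_{n}\circ\Phi_{n}=\mathrm{id}$ on chosen representatives (which already yields surjectivity of $\Psi_{n}$ with no further care), and applied to the assembled morphism $\bigoplus_{i}\iota_{i}^{*}\eta\to\eta$ it gives $\Phi_{n}(\Psi_{n}(\eta))=\eta$, again on canonical representatives.

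The genuine gap sits exactly where you wave it off as ``bookkeeping.'' To get injectivity of $\Psi_{n}$ you must know that the coproduct operation descends to Yoneda congruence classes --- at minimum, that if every $\xi_{i}$ is congruent to the trivial $n$-extension then so is $\bigoplus_{i}\xi_{i}$, so that $\nabla_{*}\bigl(\bigoplus_{i}\iota_{i}^{*}\eta\bigr)=0$ whenever all $\iota_{i}^{*}\eta=0$. Your justification --- the coproduct functor carries elementary morphisms of $n$-extensions to elementary morphisms, hence respects the zig-zag relation --- only works if the zig-zags connecting $\xi_{i}$ to $\xi_{i}'$ can be chosen with a shape that is uniform over $i\in I$. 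For $n\geq 2$ and an infinite index set the lengths of these zig-zags could a priori be unbounded, and then there is no single zig-zag of coproducts to write down. Closing this requires a normal form for the Yoneda congruence (for instance, that congruent $n$-extensions are always linked by a single span or cospan of morphisms fixing both ends) together with its compatibility with infinite coproducts; establishing precisely this is the technical content of \cite{argudin2019yoneda}. So your outline reconstructs the right proof, but the step you defer is where the real work lives, and as written the argument for injectivity is incomplete.
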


For $\B,\A\subseteq\C$
and $C\in\C,$  we recall the following notions \cite{Auslander-Buchweitz}.  The \textbf{$\mathcal{A}$-projective}  dimension of $C$ is $\pd_\A(C):=\min\left\{ n\in\mathbb{N}\;:\;
\Ext^k_\C(C,-)|_\A=0\,\forall k>n\right\};$
the $\mathcal{A}$-projective dimension  of $\mathcal{B}$
is $\pd_\A(\B):=\sup\left\{ \pd_\A(B)\;:\; B\in\B\right\}.$ The \textbf{$\mathcal{A}$-injective}  dimension $\id_\A(C)$ of $C$ and  $\mathcal{A}$-injective dimension $\id_\A(\B)$ of $\mathcal{B}$
are defined dually. For a pair $(\mathcal{X},\omega)\subseteq\mathcal{C}^2,$ it is said that $\omega$ is a \textbf{relative cogenerator}  in $\mathcal{X}$ if $\omega\subseteq\mathcal{X}$ and any $X\in\mathcal{X}$ admits an
exact sequence $0\to X\to W\to X'\to 0$ in $\C,$ with $W\in\omega$ and $X'\in\mathcal{X}.$ The class $\omega$ is \textbf{$\mathcal{X}$-injective} if $\id_\X(\omega)=0.$ Dually, we have the notions of \textbf{relative generator}  in $\mathcal{X}$ and 
\textbf{$\mathcal{X}$-projective} class.

\subsection{Relative $n$-quotients and $n$-subobjects}

\begin{defi} Let $\C$ be an abelian category, $\Y\subseteq\X\subseteq\C$ and $n\geq1.$ 
\begin{itemize}
\item[(a)] $\Y$ is \textbf{closed by $n$-quotients in $\mathcal{X}$}
if for any exact sequence in $\C$
\[0\to A\to Y_{n}\xrightarrow{\varphi_{n}} Y_{n-1}\to\cdots\to Y_{1}\xrightarrow{\varphi_{1}}B\to 0, \]
with $Y_{i}\in\Y,$ $\Ker(\varphi_{i})\in\X$ $\forall i\in[1,n]$
and $B\in\X,$ we have that $B\in\Y.$
\item[(b)] $\Y$ is \textbf{closed by $n$-subobjects in $\mathcal{X}$}
if for any exact sequence  in $\C$
\[0\to A\to Y_{1}\xrightarrow{\varphi_{1}}Y_2\to\cdots\to Y_{n}\xrightarrow{\varphi_{n}}B\to 0,\]
with $Y_{i}\in\Y,$ $\Ima(\varphi_{i})\in\X$ $\forall i\in[1,n]$
and $A\in\X$, we have that $A\in\Y.$
\end{itemize}
\end{defi}

\begin{ex} \label{exa:fpn} Let $n$ be a positive  integer, and $\mathcal{C}$ be  an AB5 category admitting a set $\omega$
such that $\omega^{\oplus}$ is a relative generator in $\mathcal{C}.$
\begin{enumerate}
\item[(1)]   Denote by
$\mathcal{FP}_{n}$  the class of all the objects $X\in\mathcal{C}$
such that the functor $\Ext^i_\C(X,-)$ preserves direct limits $\forall i\in[0,n-1].$
By \cite[Lem. 2.11]{bravo2019locally}, $\mathcal{FP}_{n}$ is closed
by $(n+1)$-quotients in $\mathcal{C}$ if $\omega\subseteq\mathcal{F}\mathcal{P}_{1}.$
 
\item[(2)]  Denote by
 $\mathcal{C}_{n}$ the class of all the objects $X\in\mathcal{FP}_{n}$
such that every subobject $Y\subseteq X$ in the class $\mathcal{FP}_{n-1}$
is in fact in $\mathcal{FP}_{n}.$ Then, by \cite[Cor. 4.5]{bravo2019locally},
$\mathcal{C}_{n}$ is closed by $1$-quotients in $\mathcal{FP}_{n-1}.$
\end{enumerate}
\end{ex}

 The following connection between ``closed under $n$-quotients'' and the relative projective dimension will be very useful in our paper 
 \cite{Argudin-Mendoza2}, where we develop the theory of $n$-$\X$-tilting classes.
 
\begin{pro} \label{prop:M ortodonal cerrado por n-cocientes sii pdM=00003Dn}
Let $\mathcal{X},\mathcal{T}\subseteq\mathcal{C}$
 and $\alpha\subseteq\mathcal{T}^{\bot}\cap\X^\perp$ be a relative cogenerator
in $\mathcal{X}.$
Then, for $n\geq1,$ $\mathcal{X}\cap\mathcal{T}^{\bot}$ is closed by $n$-quotients
in $\mathcal{X}$ if, and only if, $\pd_\X(\T)\leq n.$
\end{pro}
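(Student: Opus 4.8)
The plan is to prove both implications by dimension shifting along the middle terms of the relevant exact sequences, all of which will lie in $\T^{\perp}$; the essential tool is (the dual of) the Shifting Lemma \ref{lem:recorrimiento de dimension}. Write $\Y:=\X\cap\T^{\perp}$, and note first that $\alpha\subseteq\Y$, since $\alpha\subseteq\X$ and $\alpha\subseteq\T^{\perp}$. I also keep in mind that $\pd_\X(\T)\leq n$ means exactly that $\Ext^k_\C(T,X)=0$ for every $T\in\T$, every $X\in\X$ and every $k>n$.

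For the implication ($\Leftarrow$), I would assume $\pd_\X(\T)\leq n$ and take an arbitrary exact sequence $0\to A\to Y_n\xrightarrow{\varphi_n}\cdots\xrightarrow{\varphi_2}Y_1\xrightarrow{\varphi_1}B\to 0$ with $Y_i\in\Y$, $\Ker(\varphi_i)\in\X$ for all $i$, and $B\in\X$. Since $A=\Ker(\varphi_n)\in\X$ and each $Y_i\in\Y\subseteq\T^{\perp}$, the dual of Lemma \ref{lem:recorrimiento de dimension} applied to this sequence gives $\Ext^k_\C(T,B)\cong\Ext^{k+n}_\C(T,A)$ for all $k\geq 1$ and all $T\in\T$. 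As $A\in\X$ and $k+n>n$, the right-hand side vanishes, so $\Ext^k_\C(T,B)=0$ for all $k\geq 1$; hence $B\in\T^{\perp}$, and since $B\in\X$ we get $B\in\Y$, as required. (Equivalently, one splits the displayed sequence into its $n$ short exact sequences and iterates the long exact sequence of $\Hom_\C(T,-)$.) Note that this direction does not use the cogenerator $\alpha$.

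For ($\Rightarrow$), I would assume $\Y$ is closed by $n$-quotients in $\X$ and fix $X\in\X$. Using that $\alpha$ is a relative cogenerator in $\X$, I build inductively short exact sequences $0\to X_{i-1}\to W_{i-1}\to X_i\to 0$ with $X_0=X$, $W_{i-1}\in\alpha$ and $X_i\in\X$, for $i=1,\dots,n$, and splice them into one exact sequence $0\to X\to W_0\to W_1\to\cdots\to W_{n-1}\to X_n\to 0$. In this sequence the middle terms satisfy $W_i\in\alpha\subseteq\Y$, the successive kernels are exactly $X_0,\dots,X_{n-1}\in\X$, and the endpoint $X_n\in\X$; thus it has precisely the shape demanded by the definition of closed by $n$-quotients, so $X_n\in\Y\subseteq\T^{\perp}$. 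Dimension shifting across the same $n$ short exact sequences (each $W_i\in\T^{\perp}$) yields $\Ext^{k+n}_\C(T,X)\cong\Ext^k_\C(T,X_n)$ for all $k\geq 1$, and the right-hand side vanishes because $X_n\in\T^{\perp}$. Hence $\Ext^m_\C(T,X)=0$ for all $m>n$, all $X\in\X$ and all $T\in\T$, that is, $\pd_\X(\T)\leq n$.

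I expect the only delicate step to be the bookkeeping in ($\Rightarrow$): one must verify that the spliced coresolution genuinely matches the exact shape required by the definition of closed by $n$-quotients — concretely, that $\Ker(\varphi_i)=X_{n-i}\in\X$ for every $i$ and that the endpoint $X_n$ lies in $\X$ — before the closure hypothesis can be invoked. The remaining arguments are routine applications of (the dual of) the Shifting Lemma together with the definition of relative projective dimension.
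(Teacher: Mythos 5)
Your proposal is correct and follows essentially the same route as the paper's proof: the $(\Leftarrow)$ direction is a direct application of the dual Shifting Lemma to the given sequence, and the $(\Rightarrow)$ direction splices the $\alpha$-coresolution of $X\in\X$ into a sequence of the exact shape required by the definition of closed by $n$-quotients, concludes $X_n\in\X\cap\T^{\perp}$, and dimension-shifts back. The only (immaterial) difference is the order of steps in $(\Rightarrow)$: the paper first establishes the isomorphism $\Ext^i_\C(M,V)\cong\Ext^{n+i}_\C(M,X)$ and then invokes the closure hypothesis, whereas you invoke closure first.
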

\begin{proof}
$(\Rightarrow)$ Note that $\alpha\subseteq\mathcal{X}\cap\mathcal{T}^{\bot}.$
Let $X\in\mathcal{X}$ and $M\in\mathcal{T}.$ Since $\alpha$ is
a relative cogenerator in $\mathcal{X},$ there is an exact sequence $0\rightarrow X\rightarrow I_{0}\rightarrow\cdots\to 
 I_{n-1}\xrightarrow{f}V\rightarrow 0,$ 
 with $K:=\Ker(f)\in\X,$ $V\in\X$ and $I_{i}\in\alpha$
$\forall i\in[0,n-1]$. By applying the functor $\Hom_\C(M,-)$ to the exact sequence $0\to K\to I_{n-1}\to V\to 0$ and since 
$\Ext^i_\C(M,I_{n-1})=0=\Ext^{i+1}_\C(M,I_{n-1}),$ it follows that $\Ext^i_\C(M,V)\cong\Ext^{i+1}_\C(M,K)$ 
$\forall\;i\geq 1.$ By the dual of Lemma \ref{lem:recorrimiento de dimension}, 
$\Ext^{i+1}_\C(M,K)\cong\Ext^{n+i}_\C(M,X).$
Now, since $\mathcal{T}^{\bot}\cap\mathcal{X}$ is closed by $n$-quotients
in $\mathcal{X}$ and $V\in\mathcal{T}^{\bot}\cap\mathcal{X},$ we get
\[0=\Ext^i_\C(M,V)\cong\Ext^{i+1}_\C(M,K)\cong\Ext^{n+i}_\C(M,X)\;\forall i\geq1;\]
and thus $\pd_\X(M)\leq n.$
\

$(\Leftarrow)$ Let $0\rightarrow A\rightarrow X_{n}\xrightarrow{\varphi_n}X_{n-1}\to\cdots\to X_{1}\xrightarrow{\varphi_1} B\rightarrow 0$
be an exact sequence, where $X_{1},\cdots,X_{n}\in\mathcal{X}\cap\mathcal{T}^{\bot},$ $B\in\mathcal{X}$
and $\Ker(\varphi_{i})\in\mathcal{X}$ $\forall i\in[1,n]$. 
Let $M\in \T$.
By the dual of Lemma \ref{lem:recorrimiento de dimension}, 
$\Ext^k_\C(M,B)\cong\Ext^{n+k}_\C(M,A)\;\forall k\geq1,$ 
where $\Ext^{n+k}_\C(M,A)=0\;\forall k\geq1$ since $A\in\mathcal{X}$
and $\pd_\X(\T)\leq n.$ Therefore $B\in\mathcal{T}^{\bot}\cap\mathcal{X}.$
\end{proof}

We finish this section with an application in the $n$-cluster tilting theory. In order to do that, we generalize the notion of $n$-cluster tilting category, given by O. Iyama in \cite{iyama2011cluster}.

\begin{defi}\label{DefnXCT}  Let  $\X,\T\subseteq\C,$ $n\geq 1.$  We say that $\T$ is {\bf $n$-$\X$-cluster tilting} in  $\C$ if  the following conditions hold true.
\begin{itemize}
\item[$\mathrm{(a)}$] $\T=\add(\T).$
\item[$\mathrm{(b)}$] There exists $\alpha\subseteq \X^\perp\cap  \T^\perp$ which is a relative cogenerator in $\X.$
\item[$\mathrm{(c)}$] There exists $\beta\subseteq {}^\perp\X\cap {}^\perp \T$ which is a relative generator in $\X.$
\item[$\mathrm{(d)}$] $\T$ is functorially finite.
\item[$\mathrm{(e)}$] $\X\cap(\cap_{i=1}^{n-1}{}^{\perp_i}\T)=\T=\X\cap ( \cap_{i=1}^{n-1}\T^{\perp_i}).$
\end{itemize}
\end{defi}

 Let $\Lambda$ be a finite dimensional $k$-algebra and $\modu(\Lambda)$ be the category of finitely generated left 
$\Lambda$-modules. The notion of $n$-cluster tilting subcategory in $\modu(\Lambda)$ was introduced by O. Iyama in 
the study of a higher analogue of the classical Auslander correspondence between representation finite
algebras and Auslander algebras \cite{iyama2011cluster}. In this case, the term $n$-$\modu(\Lambda)$-cluster tilting matches with  Iyama's definition of $n$-cluster tilting subcategory in $\modu(\Lambda).$

The following corollary has been a key result in  \cite{Huerta2022periodic} to relate $n$-$\mathcal{X}$-cluster tilting with  $m$-periodic relative Gorenstein projective objects.

\begin{cor}\label{CorCTqs} For 
any
 $n$-$\X$-cluster tilting class $\T$ in the abelian category $\C,$ the following statements hold true.
\begin{itemize}
\item[$\mathrm{(a)}$] $\alpha,\beta\subseteq\T\subseteq\X.$
\item[$\mathrm{(b)}$] If $\pd_\X(\T)\leq n-1,$  then $\T=\X\cap \T^\perp =\X.$
\item[$\mathrm{(c)}$] If $\id_\X(\T)\leq n-1,$  then $\T=\X\cap{}^\perp\T =\X$.
\item[$\mathrm{(d)}$] If $\pd_\X(\X)\leq n-1,$  then $\X\cap \T^\perp=\T=\X\cap{}^\perp\T=\X$.
\end{itemize}
\end{cor}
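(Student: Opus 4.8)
The plan is to dispatch part (a) directly from the definition, prove part (b) in detail, obtain part (c) by dualizing (b), and derive part (d) from (b) together with the defining equalities (e) of Definition \ref{DefnXCT}. For part (a), the inclusion $\T\subseteq\X$ is immediate since condition (e) exhibits $\T$ as $\X$ intersected with orthogonal classes. For $\alpha\subseteq\T$, I would use that $\alpha$ is a relative cogenerator in $\X$ (so $\alpha\subseteq\X$) and that $\alpha\subseteq\T^\perp\subseteq\bigcap_{i=1}^{n-1}\T^{\perp_i}$, whence $\alpha\subseteq\X\cap\bigcap_{i=1}^{n-1}\T^{\perp_i}=\T$ by (e); the inclusion $\beta\subseteq\T$ follows by the dual computation, using $\beta\subseteq{}^\perp\T$ and the first equality in (e).

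For part (b), assume $\pd_\X(\T)\le n-1$. The first step is the auxiliary observation that $\beta\subseteq\T^\perp$: by (a) each $B\in\beta$ lies in $\T$, hence in $\X$, so condition (e) gives $\Ext^i_\C(T,B)=0$ for $1\le i\le n-1$, while $\pd_\X(\T)\le n-1$ kills $\Ext^i_\C(T,B)$ for $i\ge n$; thus $\Ext^i_\C(T,B)=0$ for all $i>0$ and all $T\in\T$. Next, given $X\in\X$, I iterate the relative generator to build a resolution $\cdots\to B_1\to B_0\to X\to 0$ with each $B_j\in\beta$ and each syzygy $X_j\in\X$. Applying $\Hom_\C(T,-)$ to the short exact sequences $0\to X_{j+1}\to B_j\to X_j\to 0$ and using $\Ext^{\ge1}_\C(T,B_j)=0$, the (dual) Shifting Lemma \ref{lem:recorrimiento de dimension} yields $\Ext^i_\C(T,X)\cong\Ext^{i+j}_\C(T,X_j)$. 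Taking $j=n-i$ (which lies in $[1,n-1]$ when $i\in[1,n-1]$) raises the degree to $n$, where $\pd_\X(\T)\le n-1$ forces $\Ext^n_\C(T,X_{n-i})=0$. Hence $\Ext^i_\C(T,X)=0$ for $1\le i\le n-1$, so $X\in\X\cap\bigcap_{i=1}^{n-1}\T^{\perp_i}=\T$; this gives $\X\subseteq\T$, hence $\T=\X$, and the same vanishing together with $\pd_\X(\T)\le n-1$ shows $\X\subseteq\T^\perp$, so $\X\cap\T^\perp=\X=\T$.

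Part (c) is the formal dual: I replace the $\beta$-resolution by an $\alpha$-coresolution $0\to X\to W^0\to W^1\to\cdots$ of $X\in\X$ with cosyzygies in $\X$, using $\alpha\subseteq\T\subseteq{}^\perp\T$ (proved exactly as above, now combining the first equality of (e) with $\id_\X(\T)\le n-1$), then apply $\Hom_\C(-,T)$ and shift the degree of $\Ext^i_\C(X,T)$ up to $n$, where $\id_\X(\T)\le n-1$ gives the vanishing. For part (d), since $\T\subseteq\X$ we have $\pd_\X(\T)\le\pd_\X(\X)\le n-1$, so part (b) applies and yields $\T=\X\cap\T^\perp=\X$. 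Substituting $\T=\X$ into the first equality of (e) shows $\Ext^i_\C(X,X')=0$ for $1\le i\le n-1$ and all $X,X'\in\X$, while $\pd_\X(\X)\le n-1$ handles $i\ge n$ (the statement being symmetric in its two $\X$-arguments); therefore $\X\subseteq{}^\perp\X={}^\perp\T$, giving $\X\cap{}^\perp\T=\X=\T$.

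The only genuinely nonformal point is the crux of part (b): choosing the right object to shift against. The naive attempts—coresolving $X$ by $\alpha$ or resolving $T$ by $\beta$—only relate a low-degree Ext of a (co)syzygy to a high-degree Ext of the original object, which is the wrong direction to exploit $\pd_\X(\T)\le n-1$. The decisive idea, which I expect to be the main obstacle to locate, is that the relative generator $\beta$ already sits inside $\T$ by part (a), hence inside $\T^\perp$; resolving $X$ by $\beta$ then makes the middle terms $\T$-acyclic and allows one to \emph{raise} the cohomological degree of $\Ext^i_\C(T,X)$ up to $n$, where the dimension hypothesis finally applies.
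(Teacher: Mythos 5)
Your proof is correct and follows essentially the same route as the paper: part (a) from Definition \ref{DefnXCT}(b,c,e), and part (b) by observing that $\beta\subseteq\T\subseteq\T^{\perp}\cap\X$ and then shifting $\Ext^{i}_{\C}(T,X)$ up to degree $n$ along a $\beta$-resolution of $X\in\X$ — the paper packages this last step as an appeal to Proposition \ref{prop:M ortodonal cerrado por n-cocientes sii pdM=00003Dn} ($\T^{\perp}\cap\X$ is closed by $(n-1)$-quotients in $\X$), whose proof is exactly your dimension-shifting argument. The only cosmetic difference is in (d), where the paper invokes (b) and (c) together via $\id_{\X}(\X)=\pd_{\X}(\X)$ while you rederive the ${}^{\perp}\T$ half directly after concluding $\T=\X$; both are valid.
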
 
\begin{proof} (a)  If $n=1,$ then by Definition \ref{DefnXCT} 
(b,c,e)
 $\T=\X$ and there is nothing to prove. Let $n\geq 2.$ Then, by Definition \ref{DefnXCT} (b,e), 
  \[\alpha\subseteq\X^\perp\cap\X\cap  \T^\perp\subseteq \X\cap \cap_{i=1}^{n-1} \T^{\perp_i}= \T.\]
  Similarly, we also get that $\beta\subseteq\T.$
\

(b) Let $\pd_\X(\T)\leq n-1.$ If $n=1$ then $\T\subseteq\T^\perp$ and thus $\T=\T^\perp\cap\T.$ Moreover, by Definition \ref{DefnXCT} (e) 
$\T=\X$ and hence (b) holds true in this case.
\

Suppose that $n\geq 2.$ Since $\pd_\X(\T)\leq n-1,$ it follows from Definition \ref{DefnXCT} (e) that
$\X\cap\T^\perp=\X\cap\cap_{i=1}^{n-1}\T^{\perp_i}=\T.$ Furthermore, by  Proposition \ref{prop:M ortodonal cerrado por n-cocientes sii pdM=00003Dn} $\T ^{\bot }\cap \X$ is closed by $(n-1)$-quotients in $\X.$ Then, by using that $\beta\subseteq\T$ is a relative generator in $\X,$  we can conclude that $\X \subseteq \T.$
\

(c) It follows as in the proof of (b) by using the dual of Proposition \ref{prop:M ortodonal cerrado por n-cocientes sii pdM=00003Dn}.
\

(d) It  follows from (b) and (c) since $\pd_\X(\T)\leq \pd_\X(\X)$ and 
$\id_\X(\T)\leq \id_\X(\X)=\pd_\X(\X).$ 
\end{proof}

\section{Cotorsion-like pairs and related notions}

A main tool in the study of relative homological dimensions is the
notion of cotorsion pair which was introduced by Luigi Salce in \cite{salce}. Namely, a pair $(\A,\B)$ of classes of objects in an 
abelian category $\C$ is a cotorsion pair if $\A={}^{\perp_1}\B$ and $\B=\A^{\perp_1}.$

In this section, we  introduce a more general notion of relative cotorsion pair which will be useful for the  development of $n$-$\X$-tilting theory in \cite{Argudin-Mendoza2}.

\begin{defi}  Let $\A,\B,\X\subseteq\C.$ The pair $(\A,\B)$ is left (resp. right) cotorsion pair in $\X$ if $\A\cap\X={}^{\perp_1}\B\cap\X$ (resp. $\B\cap\X=\A^{\perp_1}\cap\X).$ In case both conditions hold true, we call $(\A,\B)$ cotorsion pair in $\X.$
\end{defi} 

Note that,  If $\p$ is a cotorsion pair in $\C,$ then $\p$ is a cotorsion pair in $\mathcal{X}$
for any $\mathcal{X}\subseteq\mathcal{C}$.
In case $\mathcal{X}=\mathcal{C},$ we simply say that $\p$ is a
left (right) cotorsion pair if it is a left (resp. right) cotorsion pair
in $\mathcal{X}.$  

\begin{ex} \label{exa:pares de cotorsion}
Let $\C$ be an abelian category and $\X\subseteq\C.$
\begin{enumerate}
\item[(1)] Let $\X$ be closed under extensions. The cotorsion pairs $\p$ in $\X$ satisfying that $\A,\B\subseteq\X$ were amply studied in \cite{ABsurvey} and called $\X$-cotorsion pairs. In particular, the authors proved that, for a left thick class $\mathcal{X}\subseteq\mathcal{C}$
(see Definition \ref{def: thick gruesa}), and an $\mathcal{X}$-injective relative
cogenerator $\omega=\smd(\omega) $ in $\mathcal{X},$ the pair
 $(\mathcal{X},\omega^{\wedge})$ is a cotorsion pair in $\mathcal{X}^{\wedge}$ \cite[Thm. 3.6]{ABsurvey}. 

\item[(2)]  The cotorsion pairs $\p$ in $\X$ satisfying that 
$\A=\smd(\A)$ and $\B=\smd(\B)$ were amply studied in \cite{huerta2020cut} and called cotorsion pairs cut along $\X.$ In this paper we can find several examples and applications. Namely, examples in the settings of relative Gorenstein homological algebra, chain complexes, and quasi-coherent sheaves; and as applications there are some results on the finitistic dimension conjecture, on the existence of right adjoints of quotient functors by Serre categories, and the description of cotorsion pairs in triangulated categories as co-t-structures
\end{enumerate}
\end{ex}

\subsection{Relative hereditary cotorsion-like pairs}

Let $\C$ be an abelian category, we recall that a pair $(\A,\B)$ in $\C$ is {\bf hereditary} if $\id_{\A}(\B)=0.$ Examples of such pairs are $({}{}^{\bot}\mathcal{Y},({}^{\bot}\mathcal{\mathcal{Y}})^{\bot})$
and $({}^{\bot}\left(\mathcal{\mathcal{Y}}^{\bot}\right),\mathcal{\mathcal{Y}}^{\bot}),$ for any $\mathcal{Y}\subseteq\mathcal{C}.$ 
Notice that such pairs are not necessarily cotorsion pairs. However, if $\C$ has enough projectives and injectives (see Corollary \ref{Tp=cp}), we have that the pairs $({}{}^{\bot}\mathcal{Y},({}^{\bot}\mathcal{\mathcal{Y}})^{\bot})$
and $({}^{\bot}\left(\mathcal{\mathcal{Y}}^{\bot}\right),\mathcal{\mathcal{Y}}^{\bot})$ are cotorsion pairs.  Hereditary
cotorsion pairs are an important tool for building resolutions and
coresolutions in homological algebra. Hence, it is of our interest
to use this kind of pairs in more general contexts. 
Unfortunately, if $\mathcal{C}$ is an abelian category without enough
projectives or injectives, we do not know if the hereditary pair $\p$ is a cotorsion
pair. Nonetheless, we will see in the following lines that, under
the proper hypotheses, $\p$ can be seen as a cotorsion pair inside
a class $\mathcal{X}\subseteq\mathcal{C}$ that admits an $\mathcal{X}$-injective
cogenerator and an $\mathcal{X}$-projective generator. In a way,
this will represent a relative notion of cotorsion pairs. 

\begin{defi}
Let $\mathcal{C}$ be an abelian category and $\mathcal{M},\mathcal{X}\subseteq\mathcal{C}$. 
\begin{itemize}
\item[$\mathrm{(a)}$] $\mathcal{M}$ is \textbf{closed under mono-cokernels in $\mathcal{M}\cap\mathcal{X}$}
if, for any exact sequence 
\[
\suc[M][M'][M'']\mbox{ with }M,M'\in\mathcal{M}\cap\mathcal{X}\mbox{,}
\]
 we have that $M''\in\mathcal{M}$. In case $\mathcal{M}\subseteq\mathcal{X}$,
we will simply say that $\mathcal{M}$ is \textbf{closed under mono-cokernels.
} The classes which are \textbf{closed under epi-kernels} are defined dually.
\item[$\mathrm{(b)}$] $\mathcal{M}$ is \textbf{$\mathcal{X}$-resolving} if $\mathcal{M}$
contains an $\mathcal{X}$-projective relative generator in $\mathcal{X}$, and $\mathcal{M}$
is closed under epi-kernels in $\mathcal{M}\cap\mathcal{X}$ and
 under extensions. The \textbf{$\mathcal{X}$-coresolving}  classes are defined dually.
\end{itemize}
\end{defi}

\begin{lem} \label{lem:buscando completar la ortogonal} Let  $\mathcal{X},\B\subseteq\mathcal{C}$ be classes such that
$\mathcal{B}$ is $\mathcal{X}$-coresolving. Then
\begin{center}
 $^{\bot_{1}}\left(\mathcal{B}\cap\mathcal{X}\right)\cap\mathcal{X}={}{}^{\bot}\left(\mathcal{B}\cap\mathcal{X}\right)\cap\mathcal{X}\mbox{.}$ 
 \end{center}
\end{lem}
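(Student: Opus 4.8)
The plan is to establish the two inclusions separately, one of which is immediate. Since ${}^{\bot}(\mathcal{B}\cap\mathcal{X})=\cap_{i>0}{}^{\bot_i}(\mathcal{B}\cap\mathcal{X})\subseteq{}^{\bot_1}(\mathcal{B}\cap\mathcal{X})$ straight from the definition of the left orthogonal, the inclusion $\supseteq$ is trivial, and all the work lies in the reverse one. Concretely, I must show that any $C\in\mathcal{X}$ satisfying $\Ext^1_\C(C,B)=0$ for every $B\in\mathcal{B}\cap\mathcal{X}$ actually satisfies $\Ext^i_\C(C,B)=0$ for all $i\geq1$ and all $B\in\mathcal{B}\cap\mathcal{X}$.

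First I would unpack the $\mathcal{X}$-coresolving hypothesis. Dualizing the definition, it provides an $\mathcal{X}$-injective relative cogenerator $\omega\subseteq\mathcal{B}$ in $\mathcal{X}$ and tells us that $\mathcal{B}$ is closed under mono-cokernels in $\mathcal{B}\cap\mathcal{X}$. The crucial observation is that these two facts produce good cosyzygy sequences: given any $B\in\mathcal{B}\cap\mathcal{X}$, the cogenerator property yields an exact sequence $0\to B\to W\to B'\to0$ with $W\in\omega\subseteq\mathcal{B}\cap\mathcal{X}$ and $B'\in\mathcal{X}$; since both $B$ and $W$ lie in $\mathcal{B}\cap\mathcal{X}$, closure under mono-cokernels forces $B'\in\mathcal{B}$, and hence $B'\in\mathcal{B}\cap\mathcal{X}$. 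Thus the cosyzygy remains in $\mathcal{B}\cap\mathcal{X}$, which is exactly what is needed to iterate.

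The main step is then a dimension-shifting induction on $i\geq1$ with $C$ fixed. The base case $i=1$ is the hypothesis. For the inductive step, apply $\Hom_\C(C,-)$ to $0\to B\to W\to B'\to0$: because $C\in\mathcal{X}$ and $\omega$ is $\mathcal{X}$-injective we have $\Ext^i_\C(C,W)=0=\Ext^{i+1}_\C(C,W)$ for all $i\geq1$, so the associated long exact sequence collapses to isomorphisms $\Ext^{i+1}_\C(C,B)\cong\Ext^i_\C(C,B')$. As $B'\in\mathcal{B}\cap\mathcal{X}$, the inductive hypothesis gives $\Ext^i_\C(C,B')=0$, whence $\Ext^{i+1}_\C(C,B)=0$; since $B\in\mathcal{B}\cap\mathcal{X}$ was arbitrary this closes the induction and places $C$ in ${}^{\bot}(\mathcal{B}\cap\mathcal{X})$.

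I do not anticipate a genuine obstacle; the only delicate point is the bookkeeping in the induction, namely that each shifting step replaces $B$ by a fresh cosyzygy $B'$ lying again in $\mathcal{B}\cap\mathcal{X}$, so that the hypothesis $C\in{}^{\bot_1}(\mathcal{B}\cap\mathcal{X})$ can be reapplied at every degree. One could equally iterate the cogenerator construction into a full coresolution $0\to B\to W^0\to W^1\to\cdots$ with all terms in $\omega$ and all cosyzygies in $\mathcal{B}\cap\mathcal{X}$, and then invoke the dual of the Shifting Lemma \ref{lem:recorrimiento de dimension}; the one-step induction is slightly cleaner and avoids even setting this up. It is worth noting that closure under extensions, although part of being $\mathcal{X}$-coresolving, plays no role in this particular statement.
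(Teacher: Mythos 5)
Your proof is correct and follows essentially the same route as the paper's: both use the $\mathcal{X}$-injective relative cogenerator $\omega\subseteq\mathcal{B}\cap\mathcal{X}$ together with closure under mono-cokernels to produce cosyzygies staying in $\mathcal{B}\cap\mathcal{X}$, and then dimension-shift one degree at a time (the paper phrases this as ${}^{\bot_{k}}(\mathcal{B}\cap\mathcal{X})\cap\mathcal{X}\subseteq{}^{\bot_{k+1}}(\mathcal{B}\cap\mathcal{X})\cap\mathcal{X}$ rather than as an induction on the Ext degree for a fixed $C$, but this is the same argument). Your closing observation that closure under extensions is not used here is also accurate.
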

\begin{proof}
It is enough to prove that $^{\bot_{1}}\left(\mathcal{B}\cap\mathcal{X}\right)\cap\mathcal{X}\subseteq{}{}^{\bot}\left(\mathcal{B}\cap\mathcal{X}\right)\cap\mathcal{X}$.
With this goal, we will show that $^{\bot_{k}}\left(\mathcal{B}\cap\mathcal{X}\right)\cap\mathcal{X}\subseteq{}{}^{\bot_{k+1}}\left(\mathcal{B}\cap\mathcal{X}\right)\cap\mathcal{X}\,\forall k>0\mbox{.}$
Let $C\in{}{}^{\bot_{k}}\left(\mathcal{B}\cap\mathcal{X}\right)\cap\mathcal{X}$ and 
$\alpha\subseteq\mathcal{B}\cap\X$ be an $\mathcal{X}$-injective
relative cogenerator in $\mathcal{X}$. Hence, any $M\in\mathcal{B}\cap\mathcal{X}$
admits an exact sequence $\eta:\quad\suc[M][A][K],$
with $A\in\alpha\subseteq\mathcal{B}\cap\mathcal{X}$ and $K\in\mathcal{X}$.
Observe that $K\in\mathcal{\mathcal{B}}\cap\mathcal{X}$ since $\mathcal{\mathcal{B}}$
is closed under mono-cokernels in $\mathcal{B}\cap\mathcal{X}$. Hence,
in the exact sequence $\Extx[k][][C][K]\rightarrow\Extx[k+1][][C][M]\rightarrow\Extx[k+1][][C][A],$
we have $\Extx[k+1][][C][A]=0$ and $\Extx[k][][C][K]=0.$ Thus $\Extx[k+1][][C][\mathcal{B}\cap\mathcal{X}]=0.$
\end{proof}

As a direct consequence of Lemma \ref{lem:buscando completar la ortogonal}, we get the following corollary. 

\begin{cor} \label{lem:Garcia-Rozas}  Let $\A,\B,\mathcal{X}\subseteq\mathcal{C}$
be a classes such that $\Extx[1][][\mathcal{A}\cap\mathcal{X}][\mathcal{B}\cap\mathcal{X}]=0$.
If $\mathcal{B}$ is $\mathcal{X}$-coresolving and $\mathcal{A}$
is closed under epi-kernels in $\mathcal{A}\cap\mathcal{X}$, then for
any exact sequence $\suc[K][A][A']\mbox{,}$ with $A,A'\in\mathcal{A}\cap\mathcal{X}$
and $K\in\mathcal{X}$, we have $K\in{}{}^{\bot}\left(\mathcal{B}\cap\mathcal{X}\right)$.
\end{cor}

\begin{lem}\label{lem:Garcia-Rozas-1} Let $\A,\B,\mathcal{X}\subseteq\mathcal{C}$
be  classes such that $\Extx[1][][\mathcal{A}\cap\mathcal{X}][\mathcal{B}\cap\mathcal{X}]=0$ and $\A$ contains a class which is an $\mathcal{X}$-projective relative
generator in  $\mathcal{X}.$ Then, the following statements are equivalent:
\begin{enumerate}
\item[$\mathrm{(a)}$] For any exact sequence $\suc[K][A][A']$, with $A,A'\in\mathcal{A}\cap\mathcal{X}$
and $K\in\mathcal{X}$, we have that $K\in{}{}^{\bot}\left(\mathcal{B}\cap\mathcal{X}\right).$
\item[$\mathrm{(b)}$] $\idr[\mathcal{A}\cap\mathcal{X}][\mathcal{B}\cap\mathcal{X}]=0$.
\end{enumerate}
\end{lem}
\begin{proof}
We only prove that (a) implies (b) since the other implication is quite similar. Let $A\in\mathcal{A}\cap\mathcal{X}$ and $B\in\mathcal{B}\cap\mathcal{X}$. Since $\A$ contains an $\X$-projective relative generator in $\X$, there is an exact sequence  $\suc[C][P][A]$, with $C\in\mathcal{X}$ and $P\in\mathcal{A}\cap\mathcal{X}\cap {}^{\perp}\mathcal{X}$. Moreover, we have that $C\in{}{}^{\bot}\left(\mathcal{B}\cap\mathcal{X}\right)$ by (a). Then, from the exact sequence \[ \Extx[n-1][][C][B]\rightarrow\Extx[n][][A][B]\rightarrow\Extx[n][][P][B]\mbox{,} \] it follows that $\Extx[n][][A][B]=0\,\forall n\geq2$. Finally, $\Extx[1][][A][B]=0$ holds true by hypothesis.
\end{proof}

\begin{defn}
Let $\mathcal{C}$ be an abelian category and $\mathcal{X}\subseteq\mathcal{C}$.
A pair $\p$ in $\C$ is \textbf{$\mathcal{X}$-hereditary}
if $\idr[\mathcal{A}\cap\mathcal{X}][\mathcal{B}\cap\mathcal{X}]=0$.
In case that $\mathcal{X}=\mathcal{C}$, we will simply say that $\p$
is \textbf{hereditary}.
\end{defn}

Let $\mathcal{Y}\subseteq\mathcal{C}.$ As was mentioned before,  we do
not know if $\p[^{\bot}(\mathcal{Y}^{\bot})][\mathcal{Y}^{\bot}]$
is a cotorsion pair.   In the next Lemmas we discuss this phenomenon.
We will see that, under some conditions related to a given class
$\mathcal{X}$, it is possible to find a cotorsion pair $\p$ such
that $\mathcal{B}\cap\mathcal{X}=\mathcal{Y}^{\bot}\cap\mathcal{X}$,
or such that $\mathcal{A}\cap\mathcal{X}={}^{\bot}(\mathcal{Y}^{\bot})\cap\mathcal{X}$.

\begin{lem} \label{lem:cotorsion completa} Let  $\mathcal{X},\mathcal{Y}\subseteq\mathcal{C}$ be such that there
is an $\mathcal{X}$-injective relative cogenerator in $\mathcal{Y}.$
Then,  $^{\bot}\mathcal{Y}\cap\mathcal{\mathcal{X}}={}^{\bot_{1}}\mathcal{Y}\cap\mathcal{X}\mbox{.}$\end{lem}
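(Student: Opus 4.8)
The plan is to take $\Z:=\Y$ and to prove the resulting equality ${}^{\bot}\Y\cap\X={}^{\bot_{1}}\Y\cap\X$ directly. Since $\Z=\Y$ trivially satisfies $\Z\subseteq\Y$, and since ${}^{\bot}\Y\subseteq{}^{\bot_{1}}\Y$ always holds, the inclusion ${}^{\bot}\Y\cap\X\subseteq{}^{\bot_{1}}\Y\cap\X$ is immediate. The whole content of the lemma therefore lies in the reverse inclusion ${}^{\bot_{1}}\Y\cap\X\subseteq{}^{\bot}\Y\cap\X$, i.e.\ in upgrading the single vanishing condition $\Ext^{1}$ to the vanishing of all higher $\Ext$ groups against $\Y$, and this is exactly where the $\X$-injective relative cogenerator enters.

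To prove the reverse inclusion I would fix $C\in{}^{\bot_{1}}\Y\cap\X$ and show $\Ext^{i}_{\C}(C,Y)=0$ for every $Y\in\Y$ and every $i\geq1$, arguing by induction on $i$. Let $\alpha\subseteq\Y$ be the given $\X$-injective relative cogenerator in $\Y$. The two inputs I would use are: first, for each $Y\in\Y$ there is a short exact sequence $0\to Y\to A\to Y'\to 0$ with $A\in\alpha$ and $Y'\in\Y$ (this is the definition of relative cogenerator \emph{in $\Y$}, so the cosyzygy $Y'$ stays inside $\Y$); second, since $C\in\X$ and $\id_{\X}(\alpha)=0$, we get $\Ext^{j}_{\C}(C,A)=0$ for all $j\geq1$. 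The base case $i=1$ is precisely the hypothesis $C\in{}^{\bot_{1}}\Y$. For the inductive step, assuming $\Ext^{i}_{\C}(C,-)|_{\Y}=0$, I would take an arbitrary $Y\in\Y$ with its cogenerator sequence $0\to Y\to A\to Y'\to0$ and apply $\Hom_{\C}(C,-)$; in the induced long exact sequence the outer terms $\Ext^{i}_{\C}(C,A)$ and $\Ext^{i+1}_{\C}(C,A)$ vanish, so the connecting map gives $\Ext^{i+1}_{\C}(C,Y)\cong\Ext^{i}_{\C}(C,Y')$, and the right-hand side is $0$ by the inductive hypothesis because $Y'\in\Y$. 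Hence $\Ext^{i+1}_{\C}(C,Y)=0$ for all $Y\in\Y$, completing the induction and showing $C\in{}^{\bot}\Y$.

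I do not expect a serious obstacle here: this is a relative, one-step version of the dimension-shifting argument already carried out in Lemma \ref{lem:buscando completar la ortogonal}, and it is in fact slightly cleaner, since the cosyzygies produced by the cogenerator automatically lie in $\Y$, so no closure property of $\Y$ (such as being closed under mono-cokernels) is required. The only points needing a little care are purely bookkeeping: applying $\id_{\X}(\alpha)=0$ with $C$ itself in the contravariant slot of $\Ext$ — which is legitimate precisely because $C\in\X$ — and keeping track that the shift isomorphism reduces each degree against $Y$ to degree one against the next cosyzygy $Y'\in\Y$.
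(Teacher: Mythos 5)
Your proof is correct and is essentially the paper's argument: both rest on dimension-shifting along an $\alpha$-coresolution of each $M\in\mathcal{Y}$, using exactly the two inputs you isolate, namely that $\alpha$ is $\mathcal{X}$-injective and that the cosyzygies stay in $\mathcal{Y}$. The only cosmetic difference is that the paper takes $\mathcal{Z}$ to be the class of all these cosyzygies and applies the Shifting Lemma in one stroke, whereas you observe that $\mathcal{Z}=\mathcal{Y}$ already works and run the shift as a step-by-step induction on the $\operatorname{Ext}$-degree; since every $M\in\mathcal{Y}$ occurs as its own zeroth cosyzygy, the two choices of $\mathcal{Z}$ in fact coincide.
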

\begin{proof}
Let $\alpha$ be an $\mathcal{X}$-injective relative cogenerator
in $\mathcal{Y}$ and $M\in\mathcal{Y}$.  Then, there is an exact sequence
$0\rightarrow M\rightarrow A_{0}\rightarrow\cdots\rightarrow A_{n}\rightarrow A_{n+1}\rightarrow\cdots$ with $A_{i}\in\alpha$ and $M_{i+1}:=\im{A_{i}\rightarrow A_{i+1}}\in\mathcal{Y}$
$\forall i\geq0$. Consider the set $\mathcal{N}_{M}:=\left\{ M_{i}\right\} {}_{i=0}^{\infty}$,
where $M_{0}=M$. Since $\alpha$ is $\mathcal{X}$-injective, by
the 
Lemma \ref{lem:recorrimiento de dimension},
 we have $\Extx[1][][X][M_{i}]\cong\Extx[1+i][][X][M]\forall X\in\mathcal{X}\,,\forall i\geq0\mbox{.}$ Therefore, $^{\bot}M\cap\mathcal{\mathcal{X}}={}^{\bot_{1}}\mathcal{N}_{M}\cap\mathcal{X}$.
Finally, note that $\mathcal{Y}=\bigcup_{M\in\mathcal{Y}}\mathcal{N}_{M}$
and hence ${}^{\bot_{1}}\mathcal{Y}\cap\mathcal{X}={}^{\bot}\mathcal{Y}\cap\mathcal{\mathcal{X}}$. 
\end{proof}

\begin{pro} Let $\mathcal{Y}\subseteq\mathcal{X}\subseteq\mathcal{C}$ be classes such that
there is an $\mathcal{X}$-injective
relative cogenerator in $\mathcal{Y}$ and there is an $\mathcal{X}$-projective
relative generator in $\mathcal{X}$ contained in $^{\bot}\mathcal{Y}.$ Then,
 $({}{}^{\bot}\mathcal{Y}\cap\mathcal{X},\left({}{}^{\bot}\mathcal{Y}\cap\mathcal{X}\right)^{\bot}\cap\mathcal{X})$
is a cotorsion pair in $\mathcal{X}$.
\end{pro}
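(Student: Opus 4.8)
The plan is to abbreviate $\mathcal{A}:={}^{\bot}\mathcal{Y}\cap\mathcal{X}$ and $\mathcal{B}:=\mathcal{A}^{\bot}\cap\mathcal{X}=({}^{\bot}\mathcal{Y}\cap\mathcal{X})^{\bot}\cap\mathcal{X}$, and to verify the two defining equalities of a cotorsion pair in $\mathcal{X}$: namely $\mathcal{A}={}^{\bot_{1}}\mathcal{B}\cap\mathcal{X}$ and $\mathcal{B}=\mathcal{A}^{\bot_{1}}\cap\mathcal{X}$. Since $\mathcal{A},\mathcal{B}\subseteq\mathcal{X}$ we have $\mathcal{A}\cap\mathcal{X}=\mathcal{A}$ and $\mathcal{B}\cap\mathcal{X}=\mathcal{B}$, so no information is lost in this reformulation. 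The inclusions $\mathcal{A}\subseteq{}^{\bot_{1}}\mathcal{B}\cap\mathcal{X}$ and $\mathcal{B}\subseteq\mathcal{A}^{\bot_{1}}\cap\mathcal{X}$ are immediate from $\mathcal{B}\subseteq\mathcal{A}^{\bot}\subseteq\mathcal{A}^{\bot_{1}}$ together with $\mathcal{A},\mathcal{B}\subseteq\mathcal{X}$; the content lies in the two reverse inclusions, which I would handle separately.

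For the reverse inclusion ${}^{\bot_{1}}\mathcal{B}\cap\mathcal{X}\subseteq\mathcal{A}$, I would invoke Lemma \ref{lem:cotorsion completa}. The hypothesis supplying an $\mathcal{X}$-injective relative cogenerator in $\mathcal{Y}$ yields a class $\mathcal{Z}\subseteq\mathcal{Y}$ with ${}^{\bot}\mathcal{Y}\cap\mathcal{X}={}^{\bot_{1}}\mathcal{Z}\cap\mathcal{X}$, that is, $\mathcal{A}={}^{\bot_{1}}\mathcal{Z}\cap\mathcal{X}$. The key observation is $\mathcal{Z}\subseteq\mathcal{B}$: indeed $\mathcal{Z}\subseteq\mathcal{Y}\subseteq\mathcal{X}$, and for $A\in\mathcal{A}\subseteq{}^{\bot}\mathcal{Y}$ and $Z\in\mathcal{Z}\subseteq\mathcal{Y}$ we get $\Ext^{i}_{\C}(A,Z)=0$ for all $i\geq1$, so $\mathcal{Z}\subseteq\mathcal{A}^{\bot}\cap\mathcal{X}=\mathcal{B}$. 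Consequently ${}^{\bot_{1}}\mathcal{B}\subseteq{}^{\bot_{1}}\mathcal{Z}$, and intersecting with $\mathcal{X}$ gives ${}^{\bot_{1}}\mathcal{B}\cap\mathcal{X}\subseteq{}^{\bot_{1}}\mathcal{Z}\cap\mathcal{X}=\mathcal{A}$.

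For the reverse inclusion $\mathcal{A}^{\bot_{1}}\cap\mathcal{X}\subseteq\mathcal{B}$ the task is to upgrade $\bot_{1}$-vanishing to full $\bot$-vanishing, and this is where the second hypothesis enters. First I would record that $\mathcal{A}$ is closed under epi-kernels in $\mathcal{A}$: for an exact sequence $\suc[K][A][A']$ with $A,A'\in\mathcal{A}$ and $K\in\mathcal{X}$, the long exact sequence of $\Hom_{\C}(-,Y)$ with $Y\in\mathcal{Y}$ forces $\Ext^{i}_{\C}(K,Y)=0$ for all $i\geq1$, whence $K\in{}^{\bot}\mathcal{Y}$ and thus $K\in\mathcal{A}$. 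Using the $\mathcal{X}$-projective relative generator $\omega\subseteq{}^{\bot}\mathcal{Y}\cap\mathcal{X}=\mathcal{A}$ given by hypothesis, I would then resolve any $A\in\mathcal{A}$ as $0\to K_{n}\to P_{n-1}\to\cdots\to P_{0}\to A\to0$ with $P_{i}\in\omega$ and all syzygies $K_{i}\in\mathcal{A}$ (by the epi-kernel closure just established). Since $\omega$ is $\mathcal{X}$-projective, $\Ext^{j}_{\C}(P_{i},C)=0$ for every $C\in\mathcal{X}$ and $j\geq1$; hence for $C\in\mathcal{A}^{\bot_{1}}\cap\mathcal{X}$ the Shifting Lemma \ref{lem:recorrimiento de dimension} gives $\Ext^{1+n}_{\C}(A,C)\cong\Ext^{1}_{\C}(K_{n},C)=0$ for every $n\geq1$, which together with $\Ext^{1}_{\C}(A,C)=0$ yields $C\in\mathcal{A}^{\bot}\cap\mathcal{X}=\mathcal{B}$.

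The step I expect to be the most delicate is this last upgrade from $\bot_{1}$ to $\bot$. It is tempting to simply quote the dual of Lemma \ref{lem:buscando completar la ortogonal}, but that lemma is stated for $\mathcal{X}$-resolving classes, whereas $\mathcal{A}={}^{\bot}\mathcal{Y}\cap\mathcal{X}$ need not be closed under extensions unless $\mathcal{X}$ is (an extension of two objects of $\mathcal{A}$ lands in ${}^{\bot}\mathcal{Y}$ but possibly outside $\mathcal{X}$). The remedy is to run the shifting argument directly as above: extension-closure of $\mathcal{A}$ is never needed, only the presence of the $\mathcal{X}$-projective generator $\omega$ inside $\mathcal{A}$ and the epi-kernel closure of $\mathcal{A}$, both of which hold here. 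The remaining verifications are routine diagram chases with the long exact sequence of $\Ext$ and the Shifting Lemma.
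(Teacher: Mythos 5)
Your proof is correct and follows essentially the same route as the paper's: both halves reduce to the class $\mathcal{Z}$ supplied by Lemma \ref{lem:cotorsion completa} for the left-cotorsion equality, and both upgrade $\perp_{1}$-orthogonality to full $\perp$-orthogonality via the $\mathcal{X}$-projective generator for the right-cotorsion equality. Your one deviation --- inlining the shifting argument instead of quoting the dual of Lemma \ref{lem:buscando completar la ortogonal} --- rests on an unfounded worry: the paper applies that lemma to the class ${}^{\bot}\mathcal{Y}$ itself, which \emph{is} closed under extensions by the long exact sequence of $\mathrm{Ext}$ (the definition of $\mathcal{X}$-resolving concerns $\mathcal{M}={}^{\bot}\mathcal{Y}$, not $\mathcal{M}\cap\mathcal{X}$), so your hand-rolled replacement, while perfectly valid, was not needed.
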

\begin{proof}
By Lemma \ref{lem:cotorsion completa},  $^{\bot}\mathcal{Y}\cap\mathcal{\mathcal{X}}={}^{\bot_{1}}\mathcal{Y}\cap\mathcal{X}.$
Since $^{\bot}\mathcal{Y}$ is $\mathcal{X}$-resolving, by the dual of Lemma \ref{lem:buscando completar la ortogonal},
we have 
\[
\left({}{}^{\bot}\mathcal{Y}\cap\mathcal{X}\right)^{\bot}\cap\mathcal{X}=\left({}{}^{\bot}\mathcal{Y}\cap\mathcal{X}\right)^{\bot_{1}}\cap\mathcal{X}=\left({}{}^{\bot_{1}}\mathcal{Y}\cap\mathcal{X}\right)^{\bot_{1}}\cap\mathcal{X}\mbox{.}
\]
Therefore, 
$\mathfrak{C}:=({}{}^{\bot}\mathcal{Y}\cap\mathcal{X},\left({}{}^{\bot}\mathcal{Y}\cap\mathcal{X}\right)^{\bot}\cap \X)$
is a right cotorsion pair in $\mathcal{X}$. It remains to prove
that $\mathfrak{C}$ is a left cotorsion pair in $\mathcal{X}$. To
do this, we must show that\\ 
$
^{\bot_{1}}\left(\left({}{}^{\bot_{1}}\mathcal{Y}\cap\mathcal{X}\right)^{\bot_{1}}\cap\mathcal{X}\right)\cap\mathcal{X}={}^{\bot_{1}}\mathcal{Y}\cap\mathcal{X}\mbox{.}
$
Indeed, note that 
\[
^{\bot_{1}}\left(\left({}{}^{\bot_{1}}\mathcal{Y}\cap\mathcal{X}\right)^{\bot_{1}}\cap\mathcal{X}\right)\cap\mathcal{X}\subseteq{}^{\bot_{1}}\left(\left({}{}^{\bot_{1}}\mathcal{Y}\right)^{\bot_{1}}\cap\mathcal{X}\right)\cap\mathcal{X}\subseteq{}^{\bot_{1}}\left(\mathcal{Y}\cap\mathcal{X}\right)\cap\mathcal{X}={}^{\bot_{1}}\mathcal{Y}\cap\mathcal{X}\mbox{,}
\]
and  $^{\bot_{1}}\mathcal{Y}\cap\mathcal{X}\subseteq{}{}^{\bot_{1}}\left(\left({}{}^{\bot_{1}}\mathcal{Y}\cap\mathcal{X}\right)^{\bot_{1}}\right)\cap\mathcal{X}\subseteq{}{}^{\bot_{1}}\left(\left({}{}^{\bot_{1}}\mathcal{Y}\cap\mathcal{X}\right)^{\bot_{1}}\cap\mathcal{X}\right)\cap\mathcal{X}.$
\end{proof}

In case $\mathcal{Y}=\{T\}$, we can claim the following statements. For $\omega\subseteq\C,$ we denote by $\omega_{\infty}^{\wedge}$ the class of all the objects $C\in\C$ admitting an infinite exact sequence $\cdots\rightarrow W_{n}\rightarrow\cdots\rightarrow W_{1}\rightarrow W_{0}\rightarrow C\rightarrow0\mbox{,}$ with $W_{i}\in\omega\cup\left\{ 0\right\}$ $\forall\, i\geq 0.$ We denote by $\omega^{\wedge}$ the class of all the objects $C\in\C$ admitting a finite exact sequence $0\rightarrow W_{n}\rightarrow\cdots\rightarrow W_{1}\rightarrow W_{0}\rightarrow C\rightarrow0\mbox{,}$ with $W_{i}\in\omega$ $\forall\, i\in[0,n].$ 
Now, we consider the following result whose idea 
comes
 from M. Auslander.

\begin{lem} \label{lem:lema de auslander} For
$T\in\mathcal{C}$ and $\omega,\mathcal{X}\subseteq\mathcal{C}$ such that $\omega$ is $\mathcal{X}$-projective, 
the following statements hold true.
\begin{enumerate}
\item[$\mathrm{(a)}$] if $T\in\omega_{\infty}^{\wedge}$, then there is a set $\mathcal{S}\subseteq\mathcal{C}$
such that $T^{\bot}\cap\mathcal{\mathcal{X}}=\mathcal{S}{}^{\bot_{1}}\cap\mathcal{X}.$
\item[$\mathrm{(b)}$] if $T\in\omega^{\wedge}$, then there is an object $S\in\mathcal{C}$
such that $T^{\bot}\cap\mathcal{\mathcal{X}}=S{}^{\bot_{1}}\cap\mathcal{X}.$
\item[$\mathrm{(c)}$] if $\mathcal{C}$ is an AB4 category and $T\in\omega_{\infty}^{\wedge}$,
then there is an object $S\in\mathcal{C}$ such that $T^{\bot}\cap\mathcal{\mathcal{X}}=S{}^{\bot_{1}}\cap\mathcal{X}.$
\end{enumerate}
\end{lem}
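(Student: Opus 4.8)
The plan is to read off the syzygies of $T$ from its $\omega$-resolution and to use the Shifting Lemma to convert each higher $\Ext$-group against $T$ into a first $\Ext$-group against a syzygy. Concretely, given $T\in\omega_{\infty}^{\wedge}$ I would fix an exact sequence $\cdots\to W_{1}\to W_{0}\to T\to0$ with $W_{i}\in\omega\cup\{0\}$, put $S_{0}:=T$ and let $S_{n}:=\im{W_{n}\to W_{n-1}}$ be the $n$-th syzygy, so that for every $n\geq1$ there is an exact sequence $0\to S_{n}\to W_{n-1}\to\cdots\to W_{0}\to T\to0$. Since $\omega$ is $\mathcal{X}$-projective, for each $C\in\mathcal{X}$ one has $W_{i}\in{}^{\bot}C$ (the zero terms lying trivially in ${}^{\bot}C$), so Lemma \ref{lem:recorrimiento de dimension} applies and gives $\Ext^{k}_\C(S_{n},C)\cong\Ext^{k+n}_\C(T,C)$ for all $k,n\geq1$ and all $C\in\mathcal{X}$; in particular $\Ext^{1}_\C(S_{n},C)\cong\Ext^{n+1}_\C(T,C)$.

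For part (a) I would take $\mathcal{S}:=\{S_{n}\}_{n\geq0}$, a set indexed by $\mathbb{N}$. For $C\in\mathcal{X}$, membership $C\in T^{\bot}$ means $\Ext^{i}_\C(T,C)=0$ for all $i\geq1$; reading off $i=1$ from $S_{0}=T$ and $i=n+1$ from the key isomorphism, this is equivalent to $\Ext^{1}_\C(S_{n},C)=0$ for all $n\geq0$, i.e. to $C\in\mathcal{S}^{\bot_{1}}$. This yields $T^{\bot}\cap\mathcal{X}=\mathcal{S}^{\bot_{1}}\cap\mathcal{X}$.

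Parts (b) and (c) amount to collapsing $\mathcal{S}$ into a single object. If $T\in\omega^{\wedge}$ the resolution terminates with $S_{n}\cong W_{n}\in\omega$, whence $\Ext^{k}_\C(S_{n},C)=0$ for all $k\geq1$ and $C\in\mathcal{X}$, so $\Ext^{i}_\C(T,C)=0$ for all $i\geq n+1$; only the finitely many syzygies $S_{0},\dots,S_{n-1}$ remain relevant, and I would set $S:=\bigoplus_{j=0}^{n-1}S_{j}$ (a finite biproduct, always available in $\C$). Additivity of $\Ext^{1}_\C(-,C)$ on finite direct sums turns $\Ext^{1}_\C(S,C)=0$ into the simultaneous vanishing of the $\Ext^{1}_\C(S_{j},C)$, giving $T^{\bot}\cap\mathcal{X}=S^{\bot_{1}}\cap\mathcal{X}$. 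For part (c), with $\mathcal{C}$ an Ab4 category, I would instead form the full coproduct $S:=\bigoplus_{j\geq0}S_{j}$ and invoke Theorem \ref{prop:extn coprods arb} to obtain $\Ext^{1}_\C(S,C)\cong\prod_{j\geq0}\Ext^{1}_\C(S_{j},C)$; this product vanishes exactly when $C\in\mathcal{S}^{\bot_{1}}$, so part (a) closes the argument.

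The main point needing care is the hypothesis for the Shifting Lemma: it requires each $W_{i}$ to lie in ${}^{\bot}C$, that is, the vanishing of \emph{all} the higher groups $\Ext^{k}_\C(W_{i},C)$ and not only $\Ext^{1}$; this is precisely what the $\mathcal{X}$-projectivity of $\omega$ supplies. Beyond that, the argument is a routine manipulation of long exact sequences, and the Ab4 hypothesis enters only through Theorem \ref{prop:extn coprods arb}, which is exactly what is needed to replace the countable family of syzygies by a single coproduct in part (c).
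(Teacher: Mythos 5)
Your proof is correct and follows essentially the same route as the paper: both build the set of syzygies $\mathcal{S}=\{S_{n}\}_{n\geq0}$ from an $\omega$-resolution, use the Shifting Lemma together with $\omega\subseteq{}^{\bot}\mathcal{X}$ to identify $\Ext^{1}_{\mathcal{C}}(S_{n},C)$ with $\Ext^{n+1}_{\mathcal{C}}(T,C)$ for $C\in\mathcal{X}$, and then collapse $\mathcal{S}$ to a single object via a finite biproduct for (b) and via Theorem \ref{prop:extn coprods arb} in the Ab4 case for (c). Your explicit observation that only $S_{0},\dots,S_{n-1}$ are relevant in (b) is a slightly more detailed account of the step the paper dismisses as ``quite similar.''
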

\begin{proof}
Let $T\in\omega_{\infty}^{\wedge}.$ Then, there is
an exact sequence 
\[
\cdots\rightarrow W_{n+1}\rightarrow W_{n}\rightarrow\cdots\rightarrow W_{1}\rightarrow W_{0}\rightarrow T\rightarrow0\mbox{,}
\]
with $W_{i}\in\omega\cup\left\{ 0\right\} $ and $T_{i+1}:=\im{W_{i+1}\rightarrow W_{i}}\,\forall i\geq0$.
Consider $\mathcal{S}:=\left\{ T_{i}\right\} {}_{i=0}^{\infty}$,
where $T_{0}:=T$. Since $\omega\subseteq{}{}^{\bot}\mathcal{X}$,
by the Shifting Lemma we have 
\[
\Extx[][][T][X]\cong\Extx[1][][T_{i}][X]\forall i\geq0,\forall X\in\mathcal{X},
\]
and thus $\mathcal{S}^{\bot_{1}}\cap\mathcal{X}=T^{\bot}\cap\mathcal{X};$ proving (a). If $\C$ is AB4, by 
Theorem \ref{prop:extn coprods arb}, the coproduct $S:=\bigoplus_{i=0}^{\infty}T_{i}$
satisfies that $S^{\bot_{1}}=\mathcal{S}^{\bot_{1}}$ and thus (c) holds true. The proof of (b) is quite similar to the one given of (c) since in this case the set $\mathcal{S}$ is finite.
\end{proof}

The following is a particular case when the pair $({}^{\perp}(\T^\perp),\T^\perp)$ is a cotorsion pair, for a class $\T\subseteq\C.$ 

\begin{cor}\label{Tp=cp} Let $\C$ be an abelian  category  with enough projectives  and injectives, and let $\T\subseteq\C.$ Then $({}^{\perp}(\T^\perp),\T^\perp)$ is a hereditary cotorsion pair in $\C.$
\end{cor}
\begin{proof} Let $T\in \T.$ Since $\C$ has enough projectives, by Lemma \ref{lem:lema de auslander} (a), there is a set $S_T\subseteq\C$ such that $T^\perp=S^{\perp_1}_T.$
Now, consider the class $S:=\bigcup_{T\in\T} S_T.$ Then $S^{\perp_1}=\bigcap_{T\in\T} S^{\perp_1}_T=\bigcap_{T\in\T} T^\perp=\T^\perp.$ Finally, since $\C$ has enough injectives, by Lemma \ref{lem:buscando completar la ortogonal}, we get that 
${}^{\perp_1}(S^{\perp_1})={}^{\perp_1}(\T^{\perp})={}^{\perp}(\T^{\perp})$ and thus the result 
follows since ${}({}^{\perp_1}(S^{\perp_1}))^{\perp_1}=S^{\perp_1}$.
\end{proof}

In the situation presented in the previous result, it is well known that the pair $({}^{\perp}(\T^\perp),\T^\perp)$ is complete in case $S$ is a set. That is, in such a case, the pair induces ${}^{\perp}(\T^\perp)$-precovers and $\T^\perp$-preenvelopes. In the next section we will study these concepts.

\subsection{Relative complete pairs}

We will be using the following notation and vocabulary for approximations. We start this section by recalling the well known notions of precovers and preenvelopes.
\

For a class $\Z$ of objects in an abelian category $\C,$ a morphism $f:Z\rightarrow M$ in $\C$ is called a \textbf{$\mathcal{Z}$-precover}
if $Z\in\mathcal{Z}$ and 
$\Homx[][Z'][f]:\Homx[\mathcal{C}][Z'][Z]\rightarrow\Homx[\mathcal{C}][Z'][M]$
 is surjective $\forall Z'\in\mathcal{Z}.$ A $\mathcal{Z}$-precover $Z\rightarrow M$ is called \textbf{special}
if it fits in an exact sequence $\suc[M'][Z][M][\,][\,]\mbox{, where }M'\in\mathcal{Z}^{\bot_{1}}\mbox{.}$ It is said that $\Z$ is {\bf precovering} if each $C\in\C$ admits a  $\mathcal{Z}$-precover $Z\rightarrow C.$ The notions of (special) $\Z$-preenvelope and preenveloping class are defined dually. The class $\Z$ is called {\bf functorially finite} if it is precovering and preenveloping.
In the case of an Artin algebra $\Lambda,$ 
a precovering (resp. preenveloping) class in $\modd[\Lambda]$ is usually called contravariantly (resp. covariantly)
finite.
\

In order to develop the relative $n$-$\X$tilting theory in \cite{Argudin-Mendoza2}, we need a relative version of special precover (preenvelope). We elaborate these ideas below.

\begin{defn} Let $\mathcal{C}$ be an abelian category and $\mathcal{Z},\mathcal{X}\subseteq\mathcal{C}.$ We say that 
$\mathcal{Z}$ is \textbf{special precovering in $\mathcal{X}$} if
any $X\in\mathcal{X}$ admits an exact sequence 
\[
\suc[B][A][X]\mbox{, with \ensuremath{A}\ensuremath{\in\mathcal{Z}\cap\mathcal{X}} and \ensuremath{B}\ensuremath{\in\mathcal{Z}^{\bot_{1}}\cap\mathcal{X}} .}
\]
The notion of \textbf{special preenveloping in $\mathcal{X}$} is defined dually. 
\end{defn}

The previous notions are related with the notion of \emph{complete
cotorsion pair}. Let us present the following definitions that generalize
such notion. 

\begin{defn}
Let $\mathcal{C}$ be an abelian category and $\A,\B,\X\subseteq\mathcal{C}.$
The pair $\p$ is \textbf{left $\mathcal{X}$-complete} if any $X\in\mathcal{X}$
admits an exact sequence $\suc[B][A][X]$, with $A\in\mathcal{A}\cap\mathcal{X}$
and $B\in\mathcal{B}\cap\mathcal{X}$. The notion of  \textbf{right $\mathcal{X}$-complete} pair is defined dually. We say that $\p$ is \textbf{$\mathcal{X}$-complete}
if it is right and left $\mathcal{X}$-complete. 
\end{defn}

The Salce's Lemma 
(see \cite[Cor. 2.4]{salce})
 can be written, with the obvious proof, as follows.

\begin{lem}\label{lem:Lema-de-Salce} Let 
$\mathcal{X}\subseteq\mathcal{C}$ be closed under extensions and $\p$ 
be a left $\mathcal{X}$-complete pair in $\C$ such that $\B$ is
closed under extensions and contains a relative cogenerator
in $\mathcal{X}$. Then $\p$ is right $\mathcal{X}$-complete.
\end{lem}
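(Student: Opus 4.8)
The plan is to adapt the classical argument for Salce's Lemma, in which one embeds an object into an injective and then performs a pullback against a special $\mathcal{A}$-precover. Here the injective is replaced by an object of the relative cogenerator sitting inside $\mathcal{B}\cap\mathcal{X}$, and the special precover is supplied by left $\mathcal{X}$-completeness. So fix $X\in\mathcal{X}$; the goal is to produce an exact sequence $0\to X\to B\to A\to 0$ with $B\in\mathcal{B}\cap\mathcal{X}$ and $A\in\mathcal{A}\cap\mathcal{X}$, which is exactly a witness of right $\mathcal{X}$-completeness.

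First I would invoke the relative cogenerator $\omega\subseteq\mathcal{B}$ in $\mathcal{X}$. Since $\omega$ is a relative cogenerator in $\mathcal{X}$ we have $\omega\subseteq\mathcal{X}$, hence $\omega\subseteq\mathcal{B}\cap\mathcal{X}$, and $X$ fits into an exact sequence $0\to X\to W\to X'\to 0$ with $W\in\omega\subseteq\mathcal{B}\cap\mathcal{X}$ and $X'\in\mathcal{X}$. Next, applying left $\mathcal{X}$-completeness to the object $X'\in\mathcal{X}$, I obtain an exact sequence $0\to B'\to A'\to X'\to 0$ with $A'\in\mathcal{A}\cap\mathcal{X}$ and $B'\in\mathcal{B}\cap\mathcal{X}$.

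Now I would form the pullback $P$ of the two epimorphisms $W\to X'$ and $A'\to X'$. As for any pullback of a pair of epimorphisms in an abelian category, this yields two short exact sequences: $0\to X\to P\to A'\to 0$, whose kernel is $\Ker(W\to X')\cong X$, and $0\to B'\to P\to W\to 0$, whose kernel is $\Ker(A'\to X')\cong B'$. From this latter sequence, closure of $\mathcal{B}$ under extensions gives $P\in\mathcal{B}$, while closure of $\mathcal{X}$ under extensions, applied to $B',W\in\mathcal{X}$, gives $P\in\mathcal{X}$; thus $P\in\mathcal{B}\cap\mathcal{X}$. Since also $A'\in\mathcal{A}\cap\mathcal{X}$, the former sequence $0\to X\to P\to A'\to 0$ is precisely the required right $\mathcal{X}$-complete sequence for $X$.

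The construction is essentially formal, so the only point needing care, the main obstacle, is the bookkeeping of the pullback: one must verify that the pullback of two epimorphisms is again epimorphic onto each factor with the stated kernels, and, crucially, that \emph{both} closure-under-extensions hypotheses are invoked on the single sequence $0\to B'\to P\to W\to 0$ in order to land $P$ simultaneously in $\mathcal{B}$ and in $\mathcal{X}$. It is worth noting that the hypothesis that $\mathcal{B}$ contains a relative cogenerator in $\mathcal{X}$ is exactly what replaces the existence of enough injectives in the classical statement, and that $\mathcal{A}$ enters only as the recipient of the summand $A'$ coming from left $\mathcal{X}$-completeness.
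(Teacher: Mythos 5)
Your proof is correct and follows essentially the same route as the paper: coembed $X$ into the relative cogenerator inside $\mathcal{B}\cap\mathcal{X}$, resolve the cokernel $X'$ by left $\mathcal{X}$-completeness, and pull back the two epimorphisms onto $X'$ to obtain $0\to X\to P\to A'\to 0$ with $P\in\mathcal{B}\cap\mathcal{X}$ by closure under extensions. Your bookkeeping of the pullback kernels and of where each closure hypothesis is used matches the paper's argument exactly.
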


\begin{example}\label{exa:cot pair}\label{exa:cluster,n-pres} Let $\C$ be an abelian category  and $\mathcal{S}\subseteq\mathcal{C}.$ A left cotorsion pair $\p$ cut along 
$\mathcal{S}$ is called complete if for each $S\in\mathcal{S}$ there is an exact sequence\\ 
$\suc[B][A][S][\,][\,]\mbox{,}$ 
with $A\in\mathcal{A}$ and $B\in\mathcal{B}$ \cite[Def. 2.1]{huerta2020cut}. The notions of relative completeness and cotorsion settled in \cite{huerta2020cut} and ours are a little different, but they are related as follows:
\begin{enumerate}
\item[$\mathrm{(a)}$] Every left $\mathcal{S}$-complete and left cotorsion
pair $\p$ in $\mathcal{S}$ with $\mathcal{A}=\smdx[\mathcal{A}]$ is a complete left
cotorsion pair cut along $\mathcal{S}.$ 
\item[$\mathrm{(b)}$] Let $\mathcal{A},\mathcal{B}\subseteq\mathcal{S}$. If $\p$ is a complete left cotorsion
pair cut along $\mathcal{S},$ then $\p$ is
left $\mathcal{S}$-complete and a left cotorsion pair in $\mathcal{S}.$ 
\end{enumerate}
\end{example}

\section{Relative resolutions, coresolutions and related classes}

Let us introduce the type of relative resolutions and coresolutions that 
are of interest to us
and that will be useful for the development of the theory related with some special classes and properties of the relative homological dimensions. This treatment will be necessary for $n$-$\X$-tilting theory in \cite{Argudin-Mendoza2}.

\begin{defn}\label{def: res cores}
Let $\mathcal{C}$ be an abelian category, $M\in\mathcal{C}$ and $\X,\Y,\Z\subseteq\mathcal{C}.$
\begin{enumerate}
\item[$\mathrm{(a)}$] An exact sequence $0\rightarrow M\stackrel{f_{0}}{\rightarrow}Y_{0}\stackrel{f_{1}}{\rightarrow}Y_{1}\stackrel{}{\rightarrow}\cdots\stackrel{}{\rightarrow}Y_{n-1}\stackrel{f_{n}}{\rightarrow}Y_{n}\stackrel{}{\rightarrow}\cdots$ in $\C,$ with
$Y_{k}\in\mathcal{Y}\cup\left\{ 0\right\} $ $\forall k\geq0$ and
$\im{f_{i}}\in\mathcal{X}\cup\{0\}$ $\forall i\geq1$, is called
a \textbf{$\mathcal{Y}_{\mathcal{X}}$-coresolution} of $M$.

\item[$\mathrm{(b)}$]  An exact sequence $0\rightarrow M\stackrel{f_{0}}{\rightarrow}Y_{0}\stackrel{f_{1}}{\rightarrow}Y_{1}\stackrel{}{\rightarrow}\cdots\stackrel{}{\rightarrow}Y_{n-1}\stackrel{f_{n}}{\rightarrow}Y_{n}\stackrel{}{\rightarrow}0$ in $\C,$ with
$Y_{n}\in\mathcal{X}\cap\mathcal{Y}$, $Y_{k}\in\mathcal{Y}$ $\forall k\in[0,n-1]$
and $\im{f_{i}}\in\mathcal{X},$ $\forall i\in[1,n-1]$, is called
a\textbf{ $\mathcal{Y}_{\mathcal{X}}$-coresolution of length $n$}
of $M$, or simply a \textbf{finite $\mathcal{Y}_{\mathcal{X}}$-coresolution
of $M$.}

\item[$\mathrm{(c)}$]  We define $\coresdimr{\mathcal{Y}}M{\mathcal{X}},$ the \textbf{$\mathcal{Y}_{\mathcal{X}}$-coresolution dimension}
of $M,$ which is the smallest non-negative integer $n$ such that there is a $\mathcal{Y}_{\mathcal{X}}$-coresolution
of length $n$ of $M$. If such $n$ does not exist, we set $\coresdimr{\mathcal{Y}}M{\mathcal{X}}:=\infty\mbox{.}$

\item[$\mathrm{(d)}$]  The $\mathcal{Y}_{\mathcal{X}}$-coresolution dimension of the class $\mathcal{Z}$
is defined as 
\[
\coresdimr{\mathcal{Y}}{\mathcal{Z}}{\mathcal{X}}:=\sup\left\{ \coresdimr{\mathcal{Y}}Z{\mathcal{X}}\,|\:Z\in\mathcal{Z}\right\} \mbox{.}
\]

\item[$\mathrm{(e)}$]  We denote by $\mathcal{Y}_{\mathcal{X},\infty}^{\vee}$ $(\text{resp. }\mathcal{Y}_{\mathcal{X}}^{\vee})$ the
class of all the objects in $\mathcal{C}$ having a (resp. finite) $\mathcal{Y}_{\mathcal{X}}$-coresolution. 
Notice that $\mathcal{Y}_{\mathcal{X}}^{\vee}\subseteq\mathcal{Y}_{\mathcal{X},\infty}^{\vee}$.

\item[$\mathrm{(f)}$]  We denote by $\mathcal{Y}_{\mathcal{X},n}^{\vee}$ the class
of all the objects in $\mathcal{C}$ having a $\mathcal{Y}_{\mathcal{X}}$-coresolution
of length $\leq n$.

\item[$\mathrm{(g)}$]  $\p[\mathcal{X}][\mathcal{Y}]_{\infty}^{\vee}:=\mathcal{X}\cap\mathcal{Y}_{\mathcal{X},\infty}^{\vee},$ $\p[\mathcal{X}][\mathcal{Y}]^{\vee}:=\mathcal{X}\cap\mathcal{Y}_{\mathcal{X}}^{\vee}$
and $\p[\mathcal{X}][\mathcal{Y}]_{n}^{\vee}:=\mathcal{X}\cap\mathcal{Y}_{\mathcal{X},n}^{\vee}.$

\item[$\mathrm{(h)}$]  Dually, we define the $\mathcal{Y}_{\mathcal{X}}$-resolution $(\text{of length $n$}),$  the $\mathcal{Y}_{\mathcal{X}}$-resolution dimension $\resdim_\Y^{\X}(M)$ of $M$ and the classes $\mathcal{Y}_{\mathcal{X}}^{\wedge},$  $\mathcal{Y}_{\mathcal{X},\infty}^{\wedge}$ and
$\mathcal{Y}_{\mathcal{X},n}^{\wedge}.$ We also have 
$\p[\mathcal{Y}][\mathcal{X}]_{\infty}^{\wedge}:=\mathcal{Y}_{\mathcal{X},\infty}^{\wedge}\cap\mathcal{X},$
$\p[\mathcal{Y}][\mathcal{X}]^{\wedge}:=\mathcal{Y}_{\mathcal{X}}^{\wedge}\cap\mathcal{X}$
and $\p[\mathcal{Y}][\mathcal{X}]_{n}^{\wedge}:=\mathcal{Y}_{n}^{\wedge}\cap\mathcal{X}.$
\end{enumerate}
 If $\mathcal{X}=\mathcal{C},$ we omit the symbol ``$\mathcal{X}$'' 
in the above notations.
Note that
\begin{center}
 $\resdimr{\mathcal{Y}}M{\mathcal{X}}=0$  $\Leftrightarrow$ $M\simeq \mathcal{X}\cap\mathcal{Y}$ $\Leftrightarrow$ $\coresdimr{\mathcal{Y}}M{\mathcal{X}}=0.$
\end{center}
\end{defn}

Some of the above relative resolutions have been considered before in several papers. In what follows, we consider some examples where such resolutions appear.

\begin{example} (1) In \cite[Sect. 5]{auslandereiten}  M.
Auslander and I. Reiten 
were interested in the study of the contravariantly finite subcategories in $\modu(\Lambda)$
induced by \emph{cotilting modules} over an Artin algebra $\Lambda.$ In particular, they studied the following classes $\mathcal{X}_{\omega}:=(\addx[\omega]){}_{^{\bot}\omega,\infty}^{\vee}$ and 
$_{\omega}\mathcal{X}:=(\addx[\omega])_{\omega^{\bot},\infty}^{\wedge},$ for a class 
$\omega\subseteq\modd[\Lambda]$ such that
$\omega\subseteq\omega^{\bot}.$ 
\

(2) In \cite[Def. 3.11]{relgor}, the authors studied the  relative Gorenstein objects in abelian categories. Namely, for 
an abelian category $\C$ and $\A,\B\subseteq\mathcal{C},$ they introduced, respectively, the class of 
the weak $\p[\mathcal{A}][\mathcal{B}]$-Gorenstein
projectives  (resp. injectives) objects
$\mathcal{WGP}_{\p[\mathcal{A}][\mathcal{B}]}:=({}^{\bot}\mathcal{B},\mathcal{A})_{\infty}^{\vee}\quad$ 
$\quad(\text{resp. }\mathcal{WGI}_{\p[\mathcal{A}][\mathcal{B}]}:=(\mathcal{A},\mathcal{B}^{\bot})_{\infty}^{\wedge}).$
\

(3) Let $\mathcal{C}$ be an $n$-coherent category \cite[Def. 4.1]{bravo2019locally} .
An object $M\in\mathcal{C}$ is Gorenstein $\mathcal{FP}_{n}$-injective
if there is an exact sequence
\[
\eta:\:\cdots\rightarrow I_{2}\rightarrow I_{1}\rightarrow I_{0}\overset{f}{\rightarrow}E_{0}\rightarrow E_{1}\rightarrow E_{2}\rightarrow\cdots
\]
with $I_{i},E_{i}\in\Injx[\mathcal{C}]$ $\forall i\geq1$ such that
$M=\im{f}$ and the complex $\Homx[\mathcal{C}][J][\eta]$ is acyclic for any
$J\in\mathcal{FP}_{n}^{\bot_{1}}$. Let $\mathcal{GI}$ denote the
class of all the Gorenstein $\mathcal{FP}_{n}$-injective objects and $\mathcal{W}:={}^{\bot_{1}}\mathcal{GI}$.
In \cite[Lem. 5.2]{bravo2019locally}, it is proved that $\mathcal{GI}=(\Injx[\mathcal{C}],\mathcal{G}\mathcal{I})_{\infty}^{\wedge}$. 
\

(4) The above example is a particular case of the following observation.
Let $\mathcal{X}$ be a class in an abelian category $\mathcal{C}$
and $0\in\omega\subseteq\mathcal{X}$. Then, $\mathcal{X}=(\omega,\mathcal{X})_{\infty}^{\wedge}$ if, and only
if, $\omega$ is a relative generator in $\mathcal{X}$. 
\end{example}

We recall the following known 
lemma. 

\begin{lem} \label{lem:dims vs res cores}\cite[Lem. 2.13(a)]{tiltstrat}\label{lem:(4.10homologia_relativa)}
Let $\mathcal{C}$ be an abelian category and $\mathcal{X},\mathcal{Y}\subseteq\mathcal{C}$.
Then, $\pdr[\mathcal{Y}][\mathcal{X}^{\vee}]=\pdr[\mathcal{Y}][\mathcal{X}]$.
\end{lem}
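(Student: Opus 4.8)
The plan is to prove the two inequalities separately. Since every $X\in\X$ admits the trivial finite $\X$-coresolution $0\to X\to X\to 0$, we have $\X\subseteq\X^{\vee}$, and hence $\pd_\Y(\X)\leq\pd_\Y(\X^{\vee})$ follows at once from the definition of $\pd_\Y(-)$ as a supremum. The content of the lemma is therefore the reverse inequality $\pd_\Y(\X^{\vee})\leq\pd_\Y(\X)$.

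Set $d:=\pd_\Y(\X)$ and assume $d<\infty$, as otherwise there is nothing to prove. I would fix $M\in\X^{\vee}$ together with a finite $\X$-coresolution $0\to M\to X_{0}\to X_{1}\to\cdots\to X_{n}\to 0$ with $X_{i}\in\X$ for all $i$, and prove that $\pd_\Y(M)\leq d$ by induction on the length $n$. The base case $n=0$ is immediate, since then $M\cong X_{0}\in\X$ and so $\pd_\Y(M)\leq d$ by the very definition of $d$.

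For the inductive step, I would split off the first term of the coresolution: putting $M':=\Coker(M\to X_{0})=\im{X_{0}\to X_{1}}$ yields a short exact sequence $0\to M\to X_{0}\to M'\to 0$, while $M'$ inherits a finite $\X$-coresolution of length $n-1$, so that $\pd_\Y(M')\leq d$ by the induction hypothesis. Now fix $Y\in\Y$ and an integer $k>d$, and apply to this short exact sequence the contravariant long exact sequence of Yoneda extension groups (the dual of the one recalled among the preliminaries of Section~2); its relevant portion reads $\Ext^{k}_\C(X_{0},Y)\to\Ext^{k}_\C(M,Y)\to\Ext^{k+1}_\C(M',Y)$. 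Here the left-hand term vanishes because $X_{0}\in\X$ and $k>d$, and the right-hand term vanishes because $k+1>d$ and $\pd_\Y(M')\leq d$; therefore $\Ext^{k}_\C(M,Y)=0$. As $Y\in\Y$ and $k>d$ were arbitrary, this gives $\pd_\Y(M)\leq d$ and completes the induction.

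I do not expect a genuine obstacle here, since this is a routine dimension-shifting argument. The only points needing care are the bookkeeping of the index shift, so that both flanking $\Ext$-terms vanish simultaneously for $k>d$, and the use of the contravariant form of the long exact sequence in place of the covariant one printed in the text, which is legitimate under the paper's standing convention of employing each result together with its dual. Alternatively, one could bypass the induction and directly chain the connecting isomorphisms $\Ext^{k}_\C(M,Y)\cong\Ext^{k+n}_\C(X_{n},Y)$ arising from the short exact sequences that split the coresolution, in the spirit of the dual of the Shifting Lemma~\ref{lem:recorrimiento de dimension}.
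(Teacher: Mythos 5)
Your argument is correct: the inclusion $\X\subseteq\X^{\vee}$ (via the length-zero coresolution) gives $\pd_{\Y}(\X)\leq\pd_{\Y}(\X^{\vee})$, and your induction on the length of the coresolution, applying the contravariant long exact sequence of $\Ext$ to $0\to M\to X_{0}\to M'\to 0$ and checking that both flanking terms vanish for $k>d$, is the standard dimension-shifting proof of the reverse inequality. The paper itself supplies no proof of this lemma --- it is quoted from \cite[Lemma 2.13(a)]{tiltstrat} --- so there is nothing to compare against beyond noting that your argument is the expected one and that your alternative remark (chaining the isomorphisms $\Ext^{k}_{\C}(M,Y)\cong\Ext^{k+n}_{\C}(X_{n},Y)$ for $k>d$) is a valid repackaging of the same induction.
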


The next theorem shows the existence of the main approximations that we will be using
in the relative class $\,\U_{\V}^{\vee}$. This theorem is
a generalization of the dual 
 of \cite[Thm. 1.1]{Auslander-Buchweitz} and will play an important role in the development of $n$-$\X$ tilting theory in \cite{Argudin-Mendoza2}. For example, it is fundamental in the proof that the pair $({}^\perp(\T^\perp),\T^\perp)$ is $\X$-complete if $\T$ is an $n$-$\X$-tilting class in $\C.$

 It is worth  mentioning that the statement of Theorem \ref{thm:(5.4homologia_relativa)}  is inspired by  \cite[Thm. 2.8]{ABsurvey}, where the authors present 
 Auslander-Buchweitz results with the
minimum hypotheses needed. 

In the following result, the expression $\coresdim_\omega^{\V}(C_Z)=-1$
just means that $C_Z=0$.

\begin{thm} \label{thm:(5.4homologia_relativa)}\label{thm:(5.4homologia_relativa)-1}
For $\U\subseteq\V\subseteq\mathcal{C}$  classes which are closed under extensions, $\omega$  a relative generator in $\U$ and $0\in\U,$   the following statements
hold true.
\begin{enumerate}
\item[$\mathrm{(a)}$]  For any $Z\in\U_{\V}^{\vee}$, with $n:=\coresdimr{\U}Z{\V}$,
there are short exact sequences 
\begin{alignat*}{1}
\suc[Z][M_{Z}][C_{Z}][g_{Z}] & \mbox{ with \ensuremath{C_{Z}\in(\V,\omega){}^{\vee},\,M_{Z}\in\U}}\mbox{ and }\\
\suc[K_{Z}][B_{Z}][Z][][f_{Z}] & \mbox{ with \ensuremath{B_{Z}\in\omega_{\V}^{\vee}}, \ensuremath{K_{Z}\in\U}}\mbox{,}
\end{alignat*}
where $\coresdimr{\omega}{C_{Z}}{\V}=n-1$ and $\coresdimr{\omega}{B_{Z}}{\V}\leq n$. 

\item[$\mathrm{(b)}$]  $B_{Z}\in(\V,\omega){}^{\vee}$ if $Z\in(\V,\U){}^{\vee}.$

\item[$\mathrm{(c)}$]  Let $\omega\subseteq{}{}^{\bot}\U.$ Then $\omega^{\vee}\subseteq{}{}^{\bot}\U$,
$f_{Z}$ is a $\omega^{\vee}$-precover, and $g_{Z}$ is an $\U$-preenvelope.
\end{enumerate}
\end{thm}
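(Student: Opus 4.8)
The plan is to prove (a) and (b) simultaneously by strong induction on $n=\coresdimr{\mathcal{X}}{Z}{\mathcal{Y}}$, and then to deduce (c) from the resulting sequences by a dimension‑shifting argument. The point I would stress from the outset is that (a) and (b) are genuinely intertwined: the cokernel $C_Z$ of the first sequence will be produced as the middle term of the \emph{second} sequence of a first cosyzygy $Z_1$ of $Z$, so in order to know $C_Z\in(\mathcal{Y},\omega)^{\vee}$ (and not merely $C_Z\in\omega_{\mathcal{Y}}^{\vee}$) I must already have (b) available for $Z_1$; this is exactly why the induction carries both items. For the base case $n=0$ we have $Z\in\mathcal{X}\cap\mathcal{Y}=\mathcal{X}$, so I take $M_Z=Z$, $C_Z=0$, $g_Z=\id$, and for the second sequence I apply to $Z$ the hypothesis that $\omega$ is a relative generator in $\mathcal{X}$, obtaining $\suc[K_Z][B_Z][Z][][f_Z]$ with $B_Z\in\omega\subseteq(\mathcal{Y},\omega)^{\vee}$ and $K_Z\in\mathcal{X}$; the dimension bounds are immediate and (b) is clear since $\omega\subseteq\mathcal{Y}$.

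For the inductive step with $n\geq1$ I would fix a finite $\mathcal{X}_{\mathcal{Y}}$-coresolution of $Z$ and split off its first short exact sequence $\suc[Z][X_0][Z_1][][p]$, with $X_0\in\mathcal{X}$ and first cosyzygy $Z_1\in\mathcal{Y}$ satisfying $\coresdimr{\mathcal{X}}{Z_1}{\mathcal{Y}}\leq n-1$; in particular $Z_1\in(\mathcal{Y},\mathcal{X})^{\vee}$. By the inductive hypothesis $Z_1$ admits a second sequence $\suc[K_{Z_1}][B_{Z_1}][Z_1][][f_{Z_1}]$ with $K_{Z_1}\in\mathcal{X}$ and, by (b) for $Z_1$, $B_{Z_1}\in(\mathcal{Y},\omega)^{\vee}$ and $\coresdimr{\omega}{B_{Z_1}}{\mathcal{Y}}\leq n-1$. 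Pulling back $p$ along $f_{Z_1}$ produces an object $M_Z$ fitting in $\suc[K_{Z_1}][M_Z][X_0]$ and in $\suc[Z][M_Z][B_{Z_1}]$; the first of these and closure of $\mathcal{X}$ under extensions give $M_Z\in\mathcal{X}$, and the second is the desired first sequence with $C_Z:=B_{Z_1}$ and $g_Z$ the inclusion. The bound $\coresdimr{\omega}{C_Z}{\mathcal{Y}}\leq n-1$ is inherited; for the reverse inequality I splice a shortest $\omega_{\mathcal{Y}}$-coresolution of $C_Z$ after $\suc[Z][M_Z][C_Z]$ — legitimate because $M_Z\in\mathcal{X}$ and, crucially, $C_Z\in\mathcal{Y}$ — to get $n=\coresdimr{\mathcal{X}}{Z}{\mathcal{Y}}\leq\coresdimr{\mathcal{X}}{C_Z}{\mathcal{Y}}+1\leq\coresdimr{\omega}{C_Z}{\mathcal{Y}}+1$, forcing equality.

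To build the second sequence at level $n$ I would apply the relative generator in $\mathcal{X}$ to $M_Z\in\mathcal{X}$, yielding $\suc[K_Z][W][M_Z]$ with $W\in\omega$ and $K_Z\in\mathcal{X}$, and then pull back $W\to M_Z$ along the inclusion $g_Z\colon Z\to M_Z$. This gives $\suc[K_Z][B_Z][Z][][f_Z]$ with the same kernel $K_Z\in\mathcal{X}$, together with $\suc[B_Z][W][C_Z]$. Splicing the latter with an $\omega_{\mathcal{Y}}$-coresolution of $C_Z$ exhibits an $\omega_{\mathcal{Y}}$-coresolution of $B_Z$ of length $\leq n$ (the connecting image is $C_Z=B_{Z_1}\in\mathcal{Y}$, which is what makes this a bona fide $\omega_{\mathcal{Y}}$-coresolution), whence $B_Z\in\omega_{\mathcal{Y}}^{\vee}$ and $\coresdimr{\omega}{B_Z}{\mathcal{Y}}\leq n$. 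To close the induction on (b) I note that if $Z\in(\mathcal{Y},\mathcal{X})^{\vee}$, i.e. $Z\in\mathcal{Y}$, then $B_Z$ is an extension of $Z\in\mathcal{Y}$ by $K_Z\in\mathcal{X}\subseteq\mathcal{Y}$, so $B_Z\in\mathcal{Y}$ and hence $B_Z\in(\mathcal{Y},\omega)^{\vee}$.

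For (c), assuming $\omega\subseteq{}^{\bot}\mathcal{X}$, I would first prove $\omega^{\vee}\subseteq{}^{\bot}\mathcal{X}$ by dimension shifting along a finite $\omega$-coresolution using the dual of the Shifting Lemma \ref{lem:recorrimiento de dimension}. Then, since $C_Z\in(\mathcal{Y},\omega)^{\vee}\subseteq\omega^{\vee}\subseteq{}^{\bot}\mathcal{X}$, applying $\Hom_\C(-,X')$ to $\suc[Z][M_Z][C_Z]$ gives $\Ext^1_\C(C_Z,X')=0$ for all $X'\in\mathcal{X}$, so $g_Z$ is an $\mathcal{X}$-preenvelope; and since $K_Z\in\mathcal{X}$, applying $\Hom_\C(C',-)$ with $C'\in\omega^{\vee}$ to $\suc[K_Z][B_Z][Z]$ gives $\Ext^1_\C(C',K_Z)=0$, so $f_Z$ is an $\omega^{\vee}$-precover. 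The step I expect to be the main obstacle is precisely the meeting point of the two induction items: realizing that the first sequence must be constructed by pulling back along the \emph{second} sequence of the cosyzygy $Z_1$, and that the membership $C_Z\in(\mathcal{Y},\omega)^{\vee}$ — needed both to splice the $\omega_{\mathcal{Y}}$-coresolution legitimately and to pin down $\coresdimr{\omega}{C_Z}{\mathcal{Y}}=n-1$ — is exactly statement (b) for $Z_1$; everything else is careful but routine bookkeeping of pullbacks and dimension shifts.
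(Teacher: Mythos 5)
Your proof is correct and takes essentially the same route as the paper's: induction on $n$, splitting off the first cosyzygy $Z_1$ of a minimal coresolution, pulling back its second sequence (whose middle term lies in $(\mathcal{Y},\omega)^{\vee}$ exactly because of item (b) applied to $Z_1\in\mathcal{Y}$) to produce $g_Z$, and then pulling back a relative-generator presentation of $M_Z$ along $g_Z$ to produce $f_Z$, with (c) obtained by the same dimension-shifting/splitting argument. The differences from the paper's argument are purely notational.
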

\begin{proof} (a) We proceed by induction on $n=\coresdimr{\U}Z{\V}=n<\infty$. 
\

Let $n=0$. Then, $Z\in\U\cap\V$. Now, since $\omega$
is a relative generator in $\U$, there is an exact sequence
$\suc[U'][W][Z]\mbox{,}$ with $W\in\omega\subseteq\p[\V][\omega]^{\vee}$
and $U'\in\U$. We can also consider the exact sequence $\suc[Z][Z][0][1]\mbox{,}$
where $0\in\p[\V][\omega]^{\vee}$. Note that these are the
desired exact sequences.\\
Let $n>0$. Hence, there is an exact sequence $\suc[Z][U_{0}][V][\,][f]$,
with $U_{0}\in\U$,\\
\begin{minipage}[t]{0.6\columnwidth}%
$V\in\V$, and $\coresdimr{\U}V{\V}=n-1$.
Now, by inductive hypothesis, there is a short exact sequence 
\[
\suc[K_{V}][B_{V}][V][\,][f_{V}]
\]
with $\ensuremath{B_{V}\in\omega_{\V}^{\vee}}$, $\ensuremath{K_{V}\in\U\subseteq\Y}$,
and $\coresdimr{\omega}{B_{V}}{\V}\leq n-1$. Considering
the pull-back of $f_{V}$ and $f$, we get a short exact sequence 
\[
\eta:\quad\suc[Z][E][B_{V}][h][\,]
\]
with $E\in\U$ and $B_{V}\in\V$. Note that $\eta$ is the first of the two exact sequences we are looking for.\\
Now, since $E\in\U$,
there is an exact sequence %
\end{minipage}\hfill{}%
{\begin{minipage}[t]{0.35\columnwidth}%
\[
\begin{tikzpicture}[-,>=to,shorten >=1pt,auto,node distance=.9cm,main node/.style=,x=1.5cm,y=1.5cm,framed]

   \node[main node] (C) at (0,0)      {$V$};
   \node[main node] (X0) [left of=C]  {$U_0$};
   \node[main node] (X1) [left of=X0]  {$Z$};

   \node[main node] (X) [above of=C]  {$B_{V}$};
   \node[main node] (E) [left of=X]  {$E$};
   \node[main node] (X2) [left of=E]  {$Z$};

   \node[main node] (Y1) [above of=X]  {$K_{V}$};
   \node[main node] (Y2) [above of=E]  {$K_{V}$};

   \node[main node] (01) [below of=C]    {$0$};
   \node[main node] (02) [below of=X0]    {$0$};

   \node[main node] (03) [right of=C]    {$0$};
   \node[main node] (04) [left of=X1]    {$0$};

   \node[main node] (05) [right of=X]    {$0$};
   \node[main node] (06) [left of=X2]    {$0$};

   \node[main node] (07) [above of=Y1]    {$0$};
   \node[main node] (08) [above of=Y2]    {$0$};

\draw[->, thick]   (X0)  to node  {$$}  (C);
\draw[->, thick]   (X1)  to node  {$$}  (X0);
\draw[->, thick]   (C)  to node  {$$}  (03);
\draw[->, thick]   (04)  to node  {$$}  (X1);

\draw[->, thick]   (Y1)  to node  {$$}  (X);
\draw[->, thin]   (Y2)  to node  {$$}  (E);
\draw[->, thick]   (X)  to node  {$$}  (C);
\draw[-, double]   (X1)  to node  {$$}  (X2);
\draw[->, thin]   (E)  to node  {$$}  (X0);

\draw[->, thin]   (X)  to node  {$$}  (05);
\draw[->, thin]   (E)  to  node  {$$}  (X);
\draw[->, thin]   (X2)  to  node   {$$}  (E);
\draw[->, thin]   (06)  to  node   {$$}  (X2);

\draw[-, double]   (Y2)  to  node  {$$}  (Y1);
\draw[->, thick]   (07)  to  node   {$$}  (Y1);
\draw[->, thin]   (08)  to  node   {$$}  (Y2);
\draw[->, thick]   (C)  to  node   {$$}  (01);
\draw[->, thin]   (X0)  to  node   {$$}  (02);
   
\end{tikzpicture}
\]%
\end{minipage}}\\
{\begin{minipage}[t]{0.35\columnwidth}%
\[
\begin{tikzpicture}[-,>=to,shorten >=1pt,auto,node distance=1cm,main node/.style=,x=.45cm,y=.45cm,framed]

 \node[main node] (1) at (0,0){$Z$};
 \node[main node] (2) at (-2,0){$Z'$};
 \node[main node] (3) at (-4,0){$L$};
 \node[main node] (4) at (-4,-2){$L$};
 \node[main node] (5) at (-2,-2){$W$};
 \node[main node] (6) at (0,-2){$E$};
 \node[main node] (7) at (-2,-4){$B_V$};
 \node[main node] (8) at (0,-4){$B_V$};
 \node[main node] (01) at (0,2){$0$};
 \node[main node] (02) at (-2,2){$0$};
 \node[main node] (03) at (-6,0){$0$};
 \node[main node] (04) at (-6,-2){$0$};
 \node[main node] (05) at (-2,-6){$0$};
 \node[main node] (06) at (0,-6){$0$};
 \node[main node] (07) at (2,0){$0$};
 \node[main node] (08) at (2,-2){$0$};

\draw[->, thin]   (01)  to  node  {$$}    (1);
\draw[->, thin]   (02)  to  node  {$$}    (2);
\draw[->, thin]   (03)  to  node  {$$}    (3);
\draw[->, thin]   (04)  to  node  {$$}    (4);
\draw[->, thin]   (3)  to  node  {$$}    (2);
\draw[->, thin]   (2)  to  node  {$$}    (1);
\draw[->, thin]   (1)  to  node  {$$}    (07);
\draw[->, thin]   (4)  to  node  {$$}    (5);
\draw[->, thin]   (5)  to  node  {$$}    (6);
\draw[->, thin]   (6)  to  node  {$$}    (08);
\draw[->, thin]   (2)  to  node  {$$}    (5);
\draw[->, thin]   (5)  to  node  {$$}    (7);
\draw[->, thin]   (7)  to  node  {$$}    (05);
\draw[->, thin]   (1)  to  node  {$$}    (6);
\draw[->, thin]   (6)  to  node  {$$}    (8);
\draw[->, thin]   (8)  to  node  {$$}    (06);
\draw[-, double]   (3)  to  node  {$$}    (4);
\draw[-, double]   (7)  to  node  {$$}    (8);

 \end{tikzpicture}
\]%
\end{minipage}}\hfill{}%
\begin{minipage}[t]{0.6\columnwidth}%
\[
\suc[L][W][E][\,][h']
\]
with $W\in\omega$ and 
$L\in\mathcal{U}$. 
Hence, considering the
pull-back of $h$ and $h'$, we get an exact sequence 
\[
\eta':\quad \suc[L][Z'][Z][\,][\:]
\]
with $Z'\in\omega_{\V}^{\vee}$ and $L\in\U$. Note that $\eta'$ is the second of the two exact sequences we are looking for. Moreover,
observe that $\coresdimr{\mathcal{\omega}}{Z'}{\V}\leq n$.

It remains to prove that $\coresdimr{\omega}{B_{V}}{\V}=n-1$.
Since $\omega\subseteq\U$, we have%
\end{minipage} 
\[
\coresdimr{\U}{B_{V}}{\V}\leq\coresdimr{\omega}{B_{V}}{\V}\leq n-1\mbox{.}
\]
Moreover, by the exact sequence $\eta$, we have 
\[
n=\coresdimr{\U}Z{\V}\leq1+\coresdimr{\U}{B_{V}}{\V}\leq n\mbox{,}
\]
Therefore, $\coresdimr{\omega}{B_{V}}{\V}=n-1$.
\

(b)  From the last pull-back in the proof of (a), we have that $Z'\in\V$
if $Z\in\V$.
\

(c) Let $\omega\subseteq{}^{\bot}\U.$ Then, by   Lemma \ref{lem:dims vs res cores} we have $\omega^{\vee}\subseteq{}^{\bot}\U.$  Let $Z\in\U_{\V}^{\vee}$.
Consider the first exact sequence in (a). Observe that $C_{Z}\in{}^{\bot}\U$
and $M_{Z}\in\U\subseteq\left(\omega^{\vee}\right)^{\bot}$.\\
\begin{minipage}[t]{0.5\columnwidth}%
To show that $g_{Z}$ is an $\U$-preenvelope, observe that
any morphism $f:Z\rightarrow U$, with $U\in\U$, factors
through $g_{Z}$ if, and only if, the exact sequence induced by the
push-out of $f$ and $g_{Z}$ splits (which is the case since $C_{Z}\in{}^{\bot}\U$). The fact that $f_Z$ is  a $\omega^\vee$-precover is proved by dual arguments. 
\end{minipage}\hfill{}%
\fbox{\begin{minipage}[t]{0.45\columnwidth}%
\[
\begin{tikzpicture}[-,>=to,shorten >=1pt,auto,node distance=1.cm,main node/.style=,x=1.5cm,y=1.5cm]

   \node[main node] (KX) at (0,0)      {$C_Z$};
   \node[main node] (E) [left of=KX]  {$E$};
   \node[main node] (Y) [left of=E]  {$U$};
   \node[main node] (02) [left of=Y]  {$0$};
   \node[main node] (01) [right of=KX]  {$0$};

   \node[main node] (KX2) [above of=KX]  {$C_Z$};
   \node[main node] (MX) [left of=KX2]  {$M_Z$};
   \node[main node] (X) [left of=MX]  {$Z$};
   \node[main node] (04) [left of=X]  {$0$};
   \node[main node] (03) [right of=KX2]  {$0$};

\draw[->, thin]   (KX)  to node  {$$}  (01);
\draw[->, thin]   (E)  to node  {$$}  (KX);
\draw[->, thin]   (Y)  to node  {$$}  (E);
\draw[->, thin]   (02)  to node  {$$}  (Y);

\draw[->, thin]   (KX2)  to node  {$$}  (03);
\draw[->, thin]   (MX)  to  node  {$$}  (KX2);
\draw[->, thin]   (X)  to  node [above]  {$g_Z$}  (MX);
\draw[->, thin]   (04)  to  node   {$$}  (X);

\draw[-, double]   (KX2)  to  node  {$$}  (KX);
\draw[->, thin]   (MX)  to  node   {$$}  (E);
\draw[->, thin]   (X)  to  node [left]   {$f$}  (Y);
\draw[->, dashed]   (MX)  to  node   {$$}  (Y);
\draw[->, dashed]   (KX2)  to  node   {$$}  (E);
   
\end{tikzpicture}
\]%
\end{minipage}} \\
\end{proof}

Let us commence the study of the relations between the relative homological
dimensions and the relative (co)resolution dimensions. We begin with the intuitive
extension of some known results. It is worth mentioning that the following result
 is used in \cite{Argudin-Mendoza2} to study the properties of an $n$-$\X$-tilting class $\mathcal{T}.$ 
 
\begin{prop} \label{prop:coresdimr vs pdr-1} Let $\mathcal{C}$ be an abelian category,
$\mathcal{X},\mathcal{T}\subseteq\mathcal{C}$ and $\alpha\subseteq\mathcal{T}^{\bot}\cap\mathcal{X}^{\bot}$ be a
relative cogenerator in $\mathcal{X}.$ 
Then, the following inequalities hold true.
\begin{enumerate}
\item[$\mathrm{(a)}$]  $\coresdimr{\mathcal{T}^{\bot}\cap\mathcal{X}}{\mathcal{X}}{\mathcal{X}}\leq\pdr[\mathcal{X}][\mathcal{T}].$

\item[$\mathrm{(b)}$]  $\pdr[\mathcal{X}][{}{}^{\bot}\left(\mathcal{T}^{\bot}\right)]\leq\pdr[\mathcal{X}][{}{}^{\bot}\left(\mathcal{T}{}^{\bot}\cap\mathcal{X}\right)]\leq\pdr[\mathcal{X}][\mathcal{T}]$.
\end{enumerate}
\end{prop}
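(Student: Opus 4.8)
The plan is to treat the two parts separately, with part~(a) producing the concrete coresolutions that drive part~(b). For~(a), I would first dispose of the trivial case: if $\pd_\X(\T)=\infty$ there is nothing to prove, so set $n:=\pd_\X(\T)<\infty$ and fix $X\in\X$. Using repeatedly that $\alpha$ is a relative cogenerator in $\X$ (so in particular $\alpha\subseteq\X$), I would splice short exact sequences $0\to X_k\to A_k\to X_{k+1}\to 0$ with $A_k\in\alpha$ and $X_k\in\X$ (where $X_0=X$) into a coresolution
\[0\to X\to A_0\to A_1\to\cdots\to A_{n-1}\to X_n\to 0,\]
whose intermediate images $X_1,\dots,X_{n-1}$ all lie in $\X$. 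The crucial step is to check that $X_n\in\T^\bot$. Applying $\Hom_\C(T,-)$ for $T\in\T$ and using $A_k\in\alpha\subseteq\T^\bot$, the dual of the Shifting Lemma (Lemma~\ref{lem:recorrimiento de dimension}) gives $\Ext^i_\C(T,X_n)\cong\Ext^{i+n}_\C(T,X)$ for all $i\geq 1$; since $X\in\X$ and $\pd_\X(\T)\leq n$, the right-hand side vanishes. Hence $X_n\in\T^\bot\cap\X$, the displayed sequence is a $(\T^\bot\cap\X)_\X$-coresolution of $X$ of length $\leq n$, and taking the supremum over $X\in\X$ yields~(a).

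For the first inequality in~(b), I would simply observe the inclusion $\T^\bot\cap\X\subseteq\T^\bot$, which immediately gives ${}^\bot(\T^\bot)\subseteq{}^\bot(\T^\bot\cap\X)$, and then invoke the monotonicity of $\pd_\X(-)$ under class inclusion (its defining supremum ranges over a larger family). For the second inequality, assume again $n:=\pd_\X(\T)<\infty$ and fix $C\in{}^\bot(\T^\bot\cap\X)$ together with $X\in\X$. Here I would reuse the coresolution built in~(a): every one of its terms $A_0,\dots,A_{n-1},X_n$ lies in $\T^\bot\cap\X$, so $\Ext^i_\C(C,-)$ vanishes on each of them for $i\geq 1$. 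Applying $\Hom_\C(C,-)$ and the dual Shifting Lemma along this coresolution yields $\Ext^{i+n}_\C(C,X)\cong\Ext^i_\C(C,X_n)=0$ for all $i\geq 1$, that is, $\Ext^m_\C(C,X)=0$ for every $m>n$. Thus $\pd_\X(C)\leq n$, and taking the supremum over $C$ completes~(b).

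The main obstacle is the verification that the top term $X_n$ of the coresolution in~(a) lands in $\T^\bot$: this is precisely where the numerical hypothesis $\pd_\X(\T)\leq n$ is converted, through dimension shifting, into the orthogonality needed to recognize the sequence as a $(\T^\bot\cap\X)_\X$-coresolution. Once this is secured, both inequalities in~(b) reduce to formal orthogonality and monotonicity manipulations, with the coresolution from~(a) carrying the genuine content of the second inequality.
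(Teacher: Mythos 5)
Your proof is correct and follows essentially the same route as the paper: part (a) is obtained by splicing the short exact sequences supplied by the relative cogenerator $\alpha$ into a length-$n$ coresolution and using dimension shifting to see that the $n$-th cosyzygy lies in $\mathcal{T}^{\bot}$, and part (b) reuses that same coresolution (whose terms all lie in $\mathcal{T}^{\bot}\cap\mathcal{X}$) together with the dual Shifting Lemma, the first inequality being the formal monotonicity consequence of ${}^{\bot}(\mathcal{T}^{\bot})\subseteq{}^{\bot}(\mathcal{T}^{\bot}\cap\mathcal{X})$. This matches the paper's argument in both structure and detail.
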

\begin{proof} (a)  We can assume that $n:=\pdr[\mathcal{X}][\mathcal{T}]<\infty$. If $n=0$
then $\mathcal{X}\subseteq\mathcal{T}^{\bot}$ and thus 
$\coresdimr{\mathcal{T}^{\bot}\cap\mathcal{X}}{\mathcal{X}}{\mathcal{X}}=0.$
\

Let $n\geq1$. Since $\alpha$ is a relative cogenerator in $\mathcal{X}$,
for every $A\in\mathcal{X},$ there is an exact sequence 
$
0\rightarrow A\rightarrow W_{0}\rightarrow\cdots\rightarrow W_{n-1}\rightarrow Q\rightarrow0\mbox{,}
$
with $Q\in\mathcal{T}^{\bot}\cap\mathcal{X}$ and $W_{i}\in\alpha$
$\forall i\in[0,n-1]$ (see Proposition \ref{prop:M ortodonal cerrado por n-cocientes sii pdM=00003Dn}).
Therefore, $\coresdimr{\mathcal{T}^{\bot}\cap\mathcal{X}}A{\mathcal{X}}\leq n$.
\

(b) Since $^{\bot}\left(\mathcal{T}^{\bot}\right)\subseteq{}^{\bot}(\mathcal{T}^{\bot}\cap\mathcal{X})$,
it is enough to prove that $\pdr[\mathcal{X}][^{\bot}(\mathcal{T}^{\bot}\cap\mathcal{X})]\leq\pdr[\mathcal{X}][\mathcal{T}]$.
Assume $n:=\pdr[\mathcal{X}][\mathcal{T}]<\infty$. Then, by (a), there is an exact sequence 
$0\rightarrow A\rightarrow W_{0}\rightarrow\cdots\rightarrow W_{n-1}\rightarrow W_n\rightarrow0,$ where  $W_i\in\mathcal{T}^{\bot}\cap\mathcal{X}$ $\forall i\in[0,n].$ Let $Y\in{}{}^{\bot}\left(\mathcal{T}^{\bot}\cap\mathcal{X}\right)$
and $A\in\mathcal{X}.$ Since $W_i\in\mathcal{T}^{\bot}\cap\mathcal{X},$ we get that $W_i\in Y^\perp$ $\forall\,i.$ Hence, by the shifting Lemma, 
$
0=\Extx[k][][Y][W_n]\cong\Extx[k+n][][Y][A]\quad\forall k>0\mbox{.}
$
Therefore, $\pdr[\mathcal{X}][Y]\leq n$.
\end{proof}

\begin{lem} \label{lem:(4.7/51homologia_relativa)} For 
any
 exact sequence  $\suc[A][B][C]$
in  $\mathcal{C}$ and  $\mathcal{X}\subseteq\mathcal{C},$ the following inequalities hold true.
\begin{enumerate}
\item $\idr[\mathcal{X}][B]\leq\max\left\{ \idr[\mathcal{X}][A],\idr[\mathcal{X}][C]\right\}.$

\item $\idr[\mathcal{X}][A]\leq\max\left\{ \idr[\mathcal{X}][B],\idr[\mathcal{X}][C]+1\right\}.$

\item $\idr[\mathcal{X}][C]\leq\max\left\{ \idr[\mathcal{X}][B],\idr[\mathcal{X}][A]-1\right\}.$
\end{enumerate}
\end{lem}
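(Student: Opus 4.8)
The plan is to reduce all three inequalities to statements about the vanishing of Yoneda Ext groups. Dually to the definition of $\pd_\X$ given in Section 2.2, one has for any $M\in\C$ and $n\in\mathbb{N}$ that $\id_\X(M)\leq n$ if and only if $\Ext^k_\C(X,M)=0$ for every $X\in\X$ and every $k>n$. Fixing an arbitrary $X\in\X$ and applying the long exact sequence induced by $\Hom_\C(X,-)$ (the first Theorem recalled in Section 2.2) to the exact sequence $\suc[A][B][C]$, I obtain the exact strand
$$\cdots\to\Ext^{k}_\C(X,A)\to\Ext^{k}_\C(X,B)\to\Ext^{k}_\C(X,C)\to\Ext^{k+1}_\C(X,A)\to\Ext^{k+1}_\C(X,B)\to\cdots\,.$$
Writing $a:=\id_\X(A)$, $b:=\id_\X(B)$ and $c:=\id_\X(C)$, each inequality then follows by reading off three consecutive terms of this strand, and I may assume throughout that the relevant right-hand maximum is finite, since otherwise the asserted bound is vacuous.

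For (1), to show $b\leq\max\{a,c\}$ I take $k>\max\{a,c\}$ and use the segment $\Ext^{k}_\C(X,A)\to\Ext^{k}_\C(X,B)\to\Ext^{k}_\C(X,C)$: both outer terms vanish because $k>a$ and $k>c$, hence so does the middle one. For (2), to show $a\leq\max\{b,c+1\}$ I take $k>\max\{b,c+1\}$ and use $\Ext^{k-1}_\C(X,C)\to\Ext^{k}_\C(X,A)\to\Ext^{k}_\C(X,B)$: here $k>b$ kills the right-hand term while $k-1>c$ kills the left-hand term, forcing $\Ext^{k}_\C(X,A)=0$. For (3), to show $c\leq\max\{b,a-1\}$ I take $k>\max\{b,a-1\}$ and use $\Ext^{k}_\C(X,B)\to\Ext^{k}_\C(X,C)\to\Ext^{k+1}_\C(X,A)$: now $k>b$ and $k+1>a$ annihilate the outer terms. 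Since $X\in\X$ was arbitrary, the desired bound on $\id_\X$ follows in each case.

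I do not expect any serious obstacle here; the content is entirely the exactness and naturality of the long exact Ext sequence, so the only thing requiring care is the index bookkeeping at the boundary. Concretely, I would note that in (2) the condition $k>\max\{b,c+1\}$ already forces $k\geq 2$, so the term $\Ext^{k-1}_\C(X,C)$ genuinely sits in degree $\geq 1$ and the quoted exact sequence applies, and I would fix the standard conventions for the degenerate cases (for instance $A=0$, $B=0$ or $C=0$, where an expression such as $a-1$ may be negative) so that the maximum on the right is read correctly. With these conventions in place each of the three inequalities is immediate.
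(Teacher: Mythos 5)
Your proof is correct and is precisely the routine long-exact-sequence argument that the paper has in mind when it dismisses this lemma with ``The proof is straightforward.'' The index bookkeeping in all three cases checks out, and your remarks on the degenerate and boundary cases are sensible, so nothing further is needed.
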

\begin{proof}
The proof is straightforward.
 \end{proof}

\begin{lem} \cite[p.16]{Auslander-Buchweitz}\label{lem:(4.1homologia_relativa)}
$\pdr[\mathcal{Y}][\mathcal{X}]=\idr[\mathcal{X}][\mathcal{Y}],$ for any $\mathcal{X},\mathcal{Y}\subseteq\mathcal{C}.$
\end{lem}

\begin{thm} \label{thm:-Para-}\cite[Thm. 2.1]{tiltstrat} 
$\idr[\mathcal{X}][L]\leq\idr[\mathcal{X}][\mathcal{Y}]+\coresdimr{\mathcal{Y}}{L}{}$, for all  $L\in\mathcal{C}$ and $\mathcal{X},\mathcal{Y}\subseteq\mathcal{C}.$
\end{thm}

The following result is a generalization of Theorem \ref{thm:-Para-}.

\begin{thm}\label{thm:(4.2homologia_relativa)}  Let $\mathcal{C}$ be an abelian
category and $\mathcal{X}$,$\mathcal{Y}$,$\mathcal{Z}\subseteq\mathcal{C}.$
Then, 
\[
\idr[\mathcal{X}][L]\leq\idr[\mathcal{X}][\mathcal{Y}]+\coresdimr{\mathcal{Y}}L{\mathcal{Z}}\quad\forall L\in\mathcal{C}\mbox{.}
\]
 Furthermore, if $\mathcal{Z}$ is closed under extensions, then 
\[
\idr[\mathcal{X}][L]\leq\idr[\mathcal{X}][\mathcal{Y}\cap\mathcal{Z}]+\coresdimr{\mathcal{Y}}L{\mathcal{Z}}\quad\forall L\in\mathcal{Z}.
\]
 \end{thm}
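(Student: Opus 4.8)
The plan is to get the first inequality for free from Theorem \ref{thm:-Para-}, and to obtain the second one by an induction on the relative coresolution dimension, the only new ingredient being the extension-closedness of $\mathcal{Z}$, which is what lets us shrink $\mathcal{Y}$ to $\mathcal{Y}\cap\mathcal{Z}$.

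For the first inequality I would first discard the trivial case, assuming $\coresdimr{\mathcal{Y}}{L}{\mathcal{Z}}<\infty$. The key observation is that any $\mathcal{Y}_{\mathcal{Z}}$-coresolution is, after forgetting that its images lie in $\mathcal{Z}$, an ordinary $\mathcal{Y}_{\mathcal{C}}$-coresolution (here one only uses $\mathcal{Z}\cap\mathcal{Y}\subseteq\mathcal{Y}$ for the last term). Hence $\coresdimr{\mathcal{Y}}{L}{\,}\leq\coresdimr{\mathcal{Y}}{L}{\mathcal{Z}}$, and feeding this into Theorem \ref{thm:-Para-} gives
\[
\idr[\mathcal{X}][L]\leq\idr[\mathcal{X}][\mathcal{Y}]+\coresdimr{\mathcal{Y}}{L}{\,}\leq\idr[\mathcal{X}][\mathcal{Y}]+\coresdimr{\mathcal{Y}}{L}{\mathcal{Z}}\mbox{.}
\]

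For the second inequality I would assume $\mathcal{Z}$ closed under extensions, take $L\in\mathcal{Z}$, and induct on $n:=\coresdimr{\mathcal{Y}}{L}{\mathcal{Z}}$. The base case $n=0$ is immediate, since then $L\in\mathcal{Y}\cap\mathcal{Z}$ and so $\idr[\mathcal{X}][L]\leq\idr[\mathcal{X}][\mathcal{Y}\cap\mathcal{Z}]$. For $n\geq1$ I would fix a $\mathcal{Y}_{\mathcal{Z}}$-coresolution of $L$ and split off its first term, obtaining a short exact sequence $\suc[L][Y_{0}][K]$ whose cosyzygy $K$ carries a $\mathcal{Y}_{\mathcal{Z}}$-coresolution of length $n-1$. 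Thus $Y_{0}\in\mathcal{Y}$, $\coresdimr{\mathcal{Y}}{K}{\mathcal{Z}}\leq n-1$, and $K\in\mathcal{Z}$ — the latter because $K$ is the image of the next differential (in $\mathcal{Z}$ by definition of the coresolution) or, when $n=1$, equals the final term in $\mathcal{Z}\cap\mathcal{Y}$. The induction hypothesis applied to $K$ then yields $\idr[\mathcal{X}][K]\leq\idr[\mathcal{X}][\mathcal{Y}\cap\mathcal{Z}]+(n-1)$.

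The main obstacle — and the single place where the hypothesis on $\mathcal{Z}$ is essential — is upgrading $Y_{0}\in\mathcal{Y}$ to $Y_{0}\in\mathcal{Y}\cap\mathcal{Z}$. Here I would invoke that $L,K\in\mathcal{Z}$ and that $\mathcal{Z}$ is closed under extensions, applied to $\suc[L][Y_{0}][K]$, to force $Y_{0}\in\mathcal{Z}$. With $Y_{0}\in\mathcal{Y}\cap\mathcal{Z}$ secured, I would close the induction by applying Lemma \ref{lem:(4.7/51homologia_relativa)}(2) to the same sequence:
\[
\idr[\mathcal{X}][L]\leq\max\left\{\idr[\mathcal{X}][Y_{0}],\idr[\mathcal{X}][K]+1\right\}\leq\idr[\mathcal{X}][\mathcal{Y}\cap\mathcal{Z}]+n\mbox{,}
\]
where the final estimate uses $\idr[\mathcal{X}][Y_{0}]\leq\idr[\mathcal{X}][\mathcal{Y}\cap\mathcal{Z}]$ together with the inductive bound on $\idr[\mathcal{X}][K]$. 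Without extension-closedness the intermediate $\mathcal{Y}$-terms need not lie in $\mathcal{Z}$, and the sharper constant $\idr[\mathcal{X}][\mathcal{Y}\cap\mathcal{Z}]$ could not replace $\idr[\mathcal{X}][\mathcal{Y}]$.
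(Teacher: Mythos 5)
Your argument is correct and follows essentially the same route as the paper: the first inequality comes from the observation $\coresdimr{\mathcal{Y}}L{\,}\leq\coresdimr{\mathcal{Y}}L{\mathcal{Z}}$ fed into Theorem \ref{thm:-Para-}, and the second from induction on the relative coresolution dimension, using extension-closedness of $\mathcal{Z}$ to place the cosyzygy and hence $Y_{0}$ in $\mathcal{Z}$, and closing with Lemma \ref{lem:(4.7/51homologia_relativa)}. The only cosmetic difference is that the paper treats the case of dimension $1$ separately by a direct long-exact-sequence computation and builds the whole coresolution inside $\mathcal{Y}\cap\mathcal{Z}$ at once, whereas you peel off one term per inductive step; the content is identical.
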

\begin{proof} 
Note that $\coresdimr{\mathcal{Y}}L{\,}\leq\coresdimr{\mathcal{Y}}L{\mathcal{Z}}$
$\forall L\in\mathcal{C}$. Hence, by Theorem  \ref{thm:-Para-}, we have that $\idr[\mathcal{X}][L]\leq\idr[\mathcal{X}][\mathcal{Y}]+\coresdimr{\mathcal{Y}}L{\mathcal{Z}}$
$\forall L\in\mathcal{C}.$ 

Let $\mathcal{Z}$ be closed under extensions and $L\in\Z.$ Assume that $\idr[\mathcal{X}][\mathcal{Y}\cap\mathcal{Z}]=n<\infty$
and $\coresdimr{\mathcal{Y}}L{\mathcal{Z}}=m<\infty$. We prove, 
by induction on $m,$ that $\idr[\mathcal{X}][L]\leq n+m$.\\
If $m=0$, we have $L\cong M\in\mathcal{Y}\cap\mathcal{Z}$ . Let
$m=1$. Since $\Z$ is closed under extensions, there is an exact sequence 
$
\suc[L][Y_{0}][Y_{1}]
$
with $Y_{0},Y_{1}\in\mathcal{Y}\cap\mathcal{Z}.$ Thus, for every
$X\in\mathcal{X}$, we have the exact sequence 
\[
\Extx[k-1][][X][Y_{1}]\rightarrow\Extx[k][][X][L]\rightarrow\Extx[k][][X][Y_{0}]\mbox{,}
\]
 where $\Extx[k-1][][X][Y_{1}]=0$ and $\Extx[k][][X][Y_{0}]=0,$ for
any $k>n+1$. Therefore, $\idr[\mathcal{X}][L]\leq n+1=\idr[\mathcal{X}][\mathcal{Y}\cap\mathcal{Z}]+\coresdimr{\mathcal{Y}}L{\mathcal{Z}}.$
\

Let $m\geq2$. Then, by using that $\Z$ is closed under extensions, we get an exact sequence $0\rightarrow L\stackrel{f}{\rightarrow}Y_{0}\rightarrow\cdots\rightarrow Y_{m}\rightarrow 0,$  where $Y_{i}\in\mathcal{Y}\cap\mathcal{Z}$ $\forall i\in[0,m]$,
and $\coresdimr{\mathcal{Y}}K{\mathcal{Z}}=m-1$ for $K:=\Cok[f]$.
Hence, by inductive hypothesis, 
\[
\idr[\mathcal{X}][K]\leq\idr[\mathcal{X}][\mathcal{Y}\cap\mathcal{Z}]+\coresdimr{\mathcal{Y}}K{\mathcal{Z}}=\idr[\mathcal{X}][\mathcal{Y}\cap\mathcal{Z}]+\coresdimr{\mathcal{Y}}L{\mathcal{Z}}-1.
\]
 Finally, by Lemma \ref{lem:(4.7/51homologia_relativa)} it follows that
\begin{alignat*}{1}
\idr[\mathcal{X}][L] & \leq\max\left\{ \idr[\mathcal{X}][Y_{0}],\idr[\mathcal{X}][K]+1\right\} \\
 & \leq\max\left\{ \idr[\mathcal{X}][Y_{0}],\idr[\mathcal{X}][\mathcal{Y}\cap\mathcal{Z}]+\coresdimr{\mathcal{Y}}L{\mathcal{Z}}\right\} \\
 & \leq\idr[\mathcal{X}][\mathcal{Y}\cap\mathcal{Z}]+\coresdimr{\mathcal{Y}}L{\mathcal{Z}}\mbox{.}
\end{alignat*}
\end{proof}

In the following lemma, we study the relative dimensions induced by
the classes in an $\mathcal{X}$-complete pair $(\A,\B).$

\begin{lem} \label{thm:(4.8homologia_relativa)}  For $\mathcal{X}\subseteq\mathcal{C}$ and a right $\mathcal{X}$-complete pair $\p$
  in $\C,$  the following statements
hold true. 
\begin{enumerate}
\item[$\mathrm{(a)}$] For any $M\in\C,$ we have
 $$\idr[\mathcal{X}][M]=\max\left\{ \idr[\mathcal{A}\cap\mathcal{X}][M],\idr[\mathcal{B}\cap\mathcal{X}][M]\right\} \leq\max\left\{ \idr[\mathcal{A}][M],\idr[\mathcal{B}][M]\right\}. $$

\item[$\mathrm{(b)}$] If $\mathcal{A}\cup\mathcal{B}\subseteq\mathcal{X}$, then $\idr[\mathcal{X}][M]=\max\left\{ \idr[\mathcal{A}][M],\idr[\mathcal{B}][M]\right\} \,\forall M\in\mathcal{C}\mbox{.}$

\item[$\mathrm{(c)}$] If $\p$ is $\mathcal{X}$-hereditary, then $\idr[\mathcal{X}][M]=\idr[\B\cap\X][M]\:\forall M\in\B\cap\mathcal{X}\mbox{.}$
\end{enumerate}
\end{lem}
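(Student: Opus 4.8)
The plan is to establish the central equality in (a) first and then read off (b) and (c) as specializations. I would begin by clearing away the purely formal inequalities, which rely only on monotonicity of the relative injective dimension under shrinking of the test class. Since $\A\cap\X\subseteq\A$ and $\B\cap\X\subseteq\B$, we get $\idr[\mathcal{A}\cap\mathcal{X}][M]\leq\idr[\mathcal{A}][M]$ and $\idr[\mathcal{B}\cap\mathcal{X}][M]\leq\idr[\mathcal{B}][M]$, hence the final inequality $\max\{\idr[\mathcal{A}\cap\mathcal{X}][M],\idr[\mathcal{B}\cap\mathcal{X}][M]\}\leq\max\{\idr[\mathcal{A}][M],\idr[\mathcal{B}][M]\}$ for free. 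Likewise, since $\A\cap\X\subseteq\X$ and $\B\cap\X\subseteq\X$, the same monotonicity gives $\max\{\idr[\mathcal{A}\cap\mathcal{X}][M],\idr[\mathcal{B}\cap\mathcal{X}][M]\}\leq\idr[\mathcal{X}][M]$, which is one half of the equality in (a).

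The substance is the reverse inequality $\idr[\mathcal{X}][M]\leq n$, where $n:=\max\{\idr[\mathcal{A}\cap\mathcal{X}][M],\idr[\mathcal{B}\cap\mathcal{X}][M]\}$; this is where right $\X$-completeness enters and is the key step. Fixing $X\in\X$, I must show $\Ext^k_\C(X,M)=0$ for all $k>n$. Right $\X$-completeness supplies an exact sequence $0\to X\to B\to A\to 0$ with $B\in\B\cap\X$ and $A\in\A\cap\X$; applying $\Hom_\C(-,M)$ yields a long exact sequence containing the segment $\Ext^k_\C(B,M)\to\Ext^k_\C(X,M)\to\Ext^{k+1}_\C(A,M)$. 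For $k>n$ the left term vanishes because $B\in\B\cap\X$ and $\idr[\mathcal{B}\cap\mathcal{X}][M]\leq n$, while the right term vanishes because $A\in\A\cap\X$ and $\idr[\mathcal{A}\cap\mathcal{X}][M]\leq n$ (so $\Ext^{k+1}_\C(A,M)=0$ already for $k\geq n$). Exactness then forces $\Ext^k_\C(X,M)=0$; as $X\in\X$ was arbitrary, $\idr[\mathcal{X}][M]\leq n$, completing (a).

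For (b), the hypothesis $\A\cup\B\subseteq\X$ forces $\A\cap\X=\A$ and $\B\cap\X=\B$, so the equality in (a) becomes exactly $\idr[\mathcal{X}][M]=\max\{\idr[\mathcal{A}][M],\idr[\mathcal{B}][M]\}$. For (c), recall that $\X$-hereditary means $\idr[\mathcal{A}\cap\mathcal{X}][\mathcal{B}\cap\mathcal{X}]=0$; hence for $M\in\B\cap\X$ we have $\idr[\mathcal{A}\cap\mathcal{X}][M]=0$, and (a) collapses to $\idr[\mathcal{X}][M]=\max\{0,\idr[\mathcal{B}\cap\mathcal{X}][M]\}=\idr[\mathcal{B}\cap\mathcal{X}][M]$.

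The only genuine obstacle is arranging the completeness sequence and getting the index bookkeeping right in the long exact sequence; the remainder is monotonicity and specialization. In particular I would double-check the degree shift: it is $\Ext^{k+1}_\C(A,M)$, not $\Ext^k_\C(A,M)$, that must vanish, so the bound on $\idr[\mathcal{A}\cap\mathcal{X}][M]$ is invoked at degree $k+1$, which is harmless since $k>n$ gives $k+1>n$.
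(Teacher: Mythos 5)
Your proof is correct and follows essentially the same route as the paper: both use the right $\mathcal{X}$-completeness sequence $0\to X\to B\to A\to 0$ together with the long exact sequence of $\mathrm{Ext}(-,M)$ to get the nontrivial inequality, and then deduce (b) and (c) as specializations. The only cosmetic difference is that the paper packages the $\mathrm{Ext}$ computation through the lemma on dimensions in short exact sequences and the identity $\mathrm{pd}_{\{M\}}(N)=\mathrm{id}_{\{N\}}(M)$, whereas you carry it out directly on the $\mathrm{Ext}$ groups.
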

\begin{proof}
(a)  Since $\p$ is right $\mathcal{X}$-complete,  for every $N\in\mathcal{X}$,
we have an exact sequence 
$
\suc[N][B][A]
$
with $B\in\mathcal{B}\cap\mathcal{X}$ and $A\in\mathcal{A}\cap\mathcal{X}$.
Let $M\in\mathcal{C}$. Then, by Lemma \ref{lem:(4.7/51homologia_relativa)} and Lemma
\ref{lem:(4.1homologia_relativa)}, it follows that 
\begin{alignat*}{1}
\idr[\{N\}][M]=\pdr[\{M\}][N] & \leq\max\left\{ \pdr[\{M\}][B],\pdr[\{M\}][A]-1\right\} \\
 & \leq\max\left\{ \pdr[\{M\}][\mathcal{B}\cap\mathcal{X}],\pdr[\{M\}][\mathcal{A}\cap\mathcal{X}]\right\} \\
 & \leq\max\left\{ \idr[\mathcal{B}\cap\mathcal{X}][M],\idr[\mathcal{A}\cap\mathcal{X}][M]\right\} \\
 & \leq\max\left\{ \idr[\mathcal{B}][M],\idr[\mathcal{A}][M]\right\} \mbox{.}
\end{alignat*}
Hence, $\idr[\mathcal{X}][M]\leq\max\left\{ \idr[\mathcal{B}\cap\mathcal{X}][M],\idr[\mathcal{A}\cap\mathcal{X}][M]\right\} \leq\max\left\{ \idr[\mathcal{B}][M],\idr[\mathcal{A}][M]\right\} \mbox{.}$
On the other hand, it is clear that $\idr[\mathcal{X}][M]\geq\max\left\{ \idr[\mathcal{B\cap\mathcal{X}}][M],\idr[\mathcal{A\cap\mathcal{X}}][M]\right\}.$
\

(b) It follows by (a).
\

(c) It follows from (a) since  $\idr[\mathcal{A\cap X}][\B\cap\X]=0.$ 
\end{proof}

\begin{prop}\label{thm:(4.8homologia_relativa)-1} 
Let  $\mathcal{X}\subseteq\mathcal{C}$ and $\p$
be a right $\mathcal{X}$-complete and $\mathcal{X}$-hereditary pair in $\C$
such that $\left(\mathcal{A}\cap\mathcal{X}\right)^{\bot}\cap\mathcal{A}\cap\mathcal{X}\subseteq\mathcal{B}\cap\mathcal{X}$
and $\mathcal{B}=\smdx[\mathcal{B}]$. Then, for any $X\in\mathcal{X},$
we have: 
\begin{enumerate}
\item[$\mathrm{(a)}$] $\coresdimr{\mathcal{B}}X{\,}\leq\idr[\mathcal{A}\cap\mathcal{X}][X]\leq\idr[\mathcal{A}\cap\mathcal{X}][\mathcal{B}]+\coresdimx{\mathcal{B}}X\mbox{;}$
\vspace{0.2cm}
\item[$\mathrm{(b)}$] $\coresdimr{\mathcal{B}\cap\mathcal{X}}X{\mathcal{X}}=\idr[\mathcal{A}\cap\mathcal{X}][X]=\coresdimx{\mathcal{B}\cap\mathcal{X}}X\leq\\
\leq\coresdimx{\mathcal{B}\cap\mathcal{X}}{\mathcal{A}\cap\mathcal{X}}+1;$
\vspace{0.2cm}
\item[$\mathrm{(c)}$] $\coresdimr{\mathcal{B}}X{\mathcal{X}}=\coresdimr{\mathcal{B}\cap\mathcal{X}}X{\mathcal{X}}$
if $\mathcal{X}$ is closed under extensions;
\vspace{0.2cm}

\item[$\mathrm{(d)}$] $\coresdimr{\mathcal{B}}X{\,}\leq\idr[\mathcal{A}][X]\leq\idr[\mathcal{A}][\mathcal{B}]+\coresdimx{\mathcal{B}}{\mathcal{A}}+1\mbox{;}$
\vspace{0.2cm}

\item[$\mathrm{(e)}$] $(\mathcal{A}\cap\mathcal{X})^{\bot}\cap\mathcal{X}\subseteq\mathcal{B}$.
\end{enumerate}
\end{prop}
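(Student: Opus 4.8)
The backbone of the whole statement is a single construction. Starting from $X_{0}:=X\in\mathcal{X}$ and applying right $\mathcal{X}$-completeness repeatedly, I obtain short exact sequences $\suc[X_{i}][B_{i}][X_{i+1}]$ with $B_{i}\in\mathcal{B}\cap\mathcal{X}$ and $X_{i+1}\in\mathcal{A}\cap\mathcal{X}$ for every $i\geq 0$; since each $X_{i+1}$ again lies in $\mathcal{X}$, the process never stops. Splicing the first $n$ of them gives $0\to X\to B_{0}\to\cdots\to B_{n-1}\to X_{n}\to 0$, whose terms $B_{0},\dots,B_{n-1}$ lie in $\mathcal{B}\cap\mathcal{X}$ and whose internal images are the $X_{i}\in\mathcal{A}\cap\mathcal{X}\subseteq\mathcal{X}$. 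Using that $\p$ is $\mathcal{X}$-hereditary, i.e. $\Ext^{\geq1}_{\mathcal{C}}(\mathcal{A}\cap\mathcal{X},\mathcal{B}\cap\mathcal{X})=0$, together with the dual of the Shifting Lemma \ref{lem:recorrimiento de dimension}, I get for every $A\in\mathcal{A}\cap\mathcal{X}$ and $k\geq 1$ the shifting isomorphism $\Ext^{k}_{\mathcal{C}}(A,X_{i})\cong\Ext^{k+i}_{\mathcal{C}}(A,X)$, which is the bridge between $\idr[\mathcal{A}\cap\mathcal{X}][X]$ and the location of the cokernels $X_{i}$. I would first dispatch (e): if $X\in(\mathcal{A}\cap\mathcal{X})^{\bot}\cap\mathcal{X}$, then in $\suc[X][B_{0}][X_{1}]$ one has $\Ext^{1}_{\mathcal{C}}(X_{1},X)=0$ because $X_{1}\in\mathcal{A}\cap\mathcal{X}$; hence the sequence splits, $X$ is a direct summand of $B_{0}\in\mathcal{B}$, and $\mathcal{B}=\smdx[\mathcal{B}]$ forces $X\in\mathcal{B}$.

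For (a) the right inequality is immediate from Theorem \ref{thm:-Para-} applied with coresolving class $\mathcal{B}$. For the left inequality, set $n:=\idr[\mathcal{A}\cap\mathcal{X}][X]$ (assume finite). The shifting isomorphism gives $\Ext^{k}_{\mathcal{C}}(A,X_{n})\cong\Ext^{k+n}_{\mathcal{C}}(A,X)=0$ for all $k\geq 1$, so $X_{n}\in(\mathcal{A}\cap\mathcal{X})^{\bot}\cap\mathcal{X}$, whence $X_{n}\in\mathcal{B}$ by (e). Thus the spliced sequence is a genuine $\mathcal{B}$-coresolution of length $n$ (the case $n=0$ being (e) itself), giving $\coresdimr{\mathcal{B}}X{\,}\leq n$.

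The remaining parts reuse the same coresolution. For (b) that sequence is simultaneously a $(\mathcal{B}\cap\mathcal{X})_{\mathcal{X}}$-coresolution, since all its terms lie in $\mathcal{B}\cap\mathcal{X}$ and its images in $\mathcal{X}$; hence $\coresdimr{\mathcal{B}\cap\mathcal{X}}X{\mathcal{X}}\leq n$ and $\coresdimx{\mathcal{B}\cap\mathcal{X}}X\leq n$. The reverse estimate $n=\idr[\mathcal{A}\cap\mathcal{X}][X]\leq\coresdimx{\mathcal{B}\cap\mathcal{X}}X$ comes from Theorem \ref{thm:-Para-} together with $\idr[\mathcal{A}\cap\mathcal{X}][\mathcal{B}\cap\mathcal{X}]=0$, while $\coresdimx{\mathcal{B}\cap\mathcal{X}}X\leq\coresdimr{\mathcal{B}\cap\mathcal{X}}X{\mathcal{X}}$ is trivial, so the three quantities coincide; the final bound follows by splicing $\suc[X][B_{0}][X_{1}]$ with a shortest $(\mathcal{B}\cap\mathcal{X})$-coresolution of $X_{1}\in\mathcal{A}\cap\mathcal{X}$. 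For (c) I would compare the two dimensions directly: a $(\mathcal{B}\cap\mathcal{X})_{\mathcal{X}}$-coresolution is a fortiori a $\mathcal{B}_{\mathcal{X}}$-coresolution, and conversely, since all images of a $\mathcal{B}_{\mathcal{X}}$-coresolution lie in $\mathcal{X}$ and $\mathcal{X}$ is closed under extensions, each term sits in an extension of two objects of $\mathcal{X}$ and hence in $\mathcal{B}\cap\mathcal{X}$. Finally (d) follows by combining (a) with $\idr[\mathcal{A}\cap\mathcal{X}][X]\leq\idr[\mathcal{A}][X]$ for the left inequality, and with Theorem \ref{thm:-Para-} plus the splice bound $\coresdimx{\mathcal{B}}X\leq\coresdimx{\mathcal{B}}{\mathcal{A}}+1$ for the right one.

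The hard part is the left inequality of (a): one must see that the terminal cokernel $X_{n}$ of the iterated completion, which a priori only lies in $\mathcal{A}\cap\mathcal{X}$, is actually absorbed into $\mathcal{B}$. This is exactly where the two extra hypotheses enter — the orthogonality condition $(\mathcal{A}\cap\mathcal{X})^{\bot}\cap\mathcal{A}\cap\mathcal{X}\subseteq\mathcal{B}\cap\mathcal{X}$ (equivalently statement (e), extracted from $\mathcal{B}=\smdx[\mathcal{B}]$) together with the shifting isomorphism forcing $X_{n}\in(\mathcal{A}\cap\mathcal{X})^{\bot}$. Keeping the bookkeeping of the shift $\Ext^{k}_{\mathcal{C}}(A,X_{n})\cong\Ext^{k+n}_{\mathcal{C}}(A,X)$ accurate, and treating the degenerate case $n=0$ through (e) rather than through the coresolution, are the only delicate points.
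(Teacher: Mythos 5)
Your proposal is correct and follows essentially the same route as the paper's proof: the iterated right $\mathcal{X}$-complete short exact sequences spliced into a coresolution, the shifting isomorphism $\Ext^{k}_{\mathcal{C}}(A,X_{n})\cong\Ext^{k+n}_{\mathcal{C}}(A,X)$ obtained from $\mathcal{X}$-hereditariness to place the terminal cokernel in $(\mathcal{A}\cap\mathcal{X})^{\bot}$, and the splitting argument that settles (e) and the $n=0$ case are exactly the paper's argument. The only (harmless) deviations are in (c), where you compare the two coresolutions directly using closure of $\mathcal{X}$ under extensions instead of invoking Theorem \ref{thm:(4.2homologia_relativa)}, and in the tail bounds of (b) and (d), where you splice coresolutions where the paper runs the estimate through Lemma \ref{lem:(4.7/51homologia_relativa)} and Theorem \ref{thm:-Para-}; both variants yield the same inequalities.
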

\begin{proof}
Let $X\in \mathcal{X}$. We prove firstly that
\begin{equation}
\coresdimr{\mathcal{B}\cap\mathcal{X}}X{\mathcal{X}}\leq\idr[\mathcal{A}\cap\mathcal{X}][X]\mbox{.}\label{eq:1-2}
\end{equation}

We may assume that $n:=\idr[\mathcal{A}\cap\mathcal{X}][X]<\infty.$ Let
$n=0$. In order to show that $\coresdimr{\mathcal{B}\cap\mathcal{X}}X{\mathcal{X}}=0,$
it is enough to prove that $X\in\mathcal{B}$. Since $\p$ is right
$\mathcal{X}$-complete, there is an exact sequence 
\[
\eta:\:\suc[X][B][A]\mbox{, with }B\in\mathcal{B}\cap\mathcal{X}\mbox{ and }A\in\mathcal{A}\cap\mathcal{X}.
\]
 Observe that $\eta$ splits since $n=0$. Hence $X$ is a direct
summand of $B$ and thus $X\in\mathcal{B}$. Note that this argument gives a proof of  (e).

Let $n\geq1$. Since $\p$ is right $\mathcal{X}$-complete, there
is an exact sequence 
\[
\epsilon:\;0\rightarrow X\stackrel{g_{0}}{\rightarrow}B_{0}\stackrel{g_{1}}{\rightarrow}B_{1}\stackrel{g_{2}}{\rightarrow}...\stackrel{g_{n-1}}{\rightarrow}B_{n-1}\rightarrow A_{n}\rightarrow0
\]
with $B_{i}\in\mathcal{B}\cap\mathcal{X}$ and $A_{i+1}:=\Cok[g_{i}]\in\mathcal{A}\cap\mathcal{X}$
$\forall i\in[0,n-1]$. Now, since $\idr[\mathcal{A}\cap\mathcal{X}][\mathcal{B}\cap\mathcal{X}]=0$
and $n=\idr[\mathcal{A}\cap\mathcal{X}][X]$, we have 
\[
\Extx[i][][A][A_{n}]\cong\Extx[n+i][][A][X]=0\:\forall A\in\mathcal{A}\cap\mathcal{X}\,\forall i\geq1.
\]
 Hence $A_{n}\in\left(\mathcal{A}\cap\mathcal{X}\right)^{\bot}\cap\mathcal{A}\cap\mathcal{X}\subseteq\mathcal{B}\cap\mathcal{X}$
and thus, from $\epsilon$, we have $\coresdimr{\mathcal{B}}X{\mathcal{X}}\leq n;$
proving (\ref{eq:1-2}).

Note that (a) follows from (\ref{eq:1-2}) and Theorem \ref{thm:-Para-}. On
the other hand, by using that $\idr[\mathcal{A}\cap\mathcal{X}][\mathcal{B}\cap\mathcal{X}]=0$,
(\ref{eq:1-2}) and Theorem \ref{thm:-Para-}, we get 
\begin{alignat}{1}
\coresdimr{\mathcal{B}}X{\,} & \leq\coresdimr{\mathcal{B}}X{\mathcal{X}}\nonumber \\
 & \leq\coresdimr{\mathcal{B}\cap\mathcal{X}}X{\mathcal{X}}\nonumber \\
 & \leq\idr[\mathcal{A}\cap\mathcal{X}][X]\nonumber \\
 & \leq\idr[\mathcal{A}\cap\mathcal{X}][\mathcal{B}\cap\mathcal{X}]+\coresdimx{\mathcal{B}\cap\mathcal{X}}X\nonumber \\
 & =\coresdimx{\mathcal{B}\cap\mathcal{X}}X\nonumber \\
 & \leq\coresdimr{\mathcal{B}\cap\mathcal{X}}X{\mathcal{X}}\mbox{.}\label{eq:2-2}
\end{alignat}
 Note that (\ref{eq:2-2}) proves the first two equalities of (b). Next, let us prove that 
\[
\idr[\mathcal{A}\cap\mathcal{X}][X]\leq\coresdimr{\mathcal{B}\cap\mathcal{X}}{\mathcal{A}\cap\mathcal{X}}{\,}+1\mbox{, }\forall X\in\mathcal{X}\mbox{.}
\]
Consider $X\in\mathcal{X}$. Since $\p$ is right $\mathcal{X}$-complete,
there is an exact sequence 
\[
\suc[X][B][A]\mbox{, with }B\in\mathcal{B}\cap\mathcal{X}\mbox{ and }A\in\mathcal{A}\cap\mathcal{X}.
\]
 Then, by Lemma \ref{lem:(4.7/51homologia_relativa)} and Theorem \ref{thm:-Para-},
we have 
\begin{alignat*}{1}
\idr[\mathcal{A}\cap\mathcal{X}][X] & \leq\max\left\{ \idr[\mathcal{A}\cap\mathcal{X}][B],\idr[\mathcal{A}\cap\mathcal{X}][A]+1\right\} \\
 & =\idr[\mathcal{A}\cap\mathcal{X}][A]+1\\
 & \leq\idr[\mathcal{A}\cap\mathcal{X}][\mathcal{A}\cap\mathcal{X}]+1\\
 & \leq\idr[\mathcal{A}\cap\mathcal{X}][\mathcal{B}\cap\mathcal{X}]+\coresdimx{\mathcal{B}\cap\mathcal{X}}{\mathcal{A}\cap\mathcal{X}}+1\\
 & =\coresdimx{\mathcal{B}\cap\mathcal{X}}{\mathcal{A}\cap\mathcal{X}}+1\mbox{.}
\end{alignat*}
Therefore, $\idr[\mathcal{A}\cap\mathcal{X}][X]\leq\coresdimx{\mathcal{B}\cap\mathcal{X}}{\mathcal{A}\cap\mathcal{X}}+1$
$\forall X\in\mathcal{X}$.

To prove (c), we assume that $\mathcal{X}$ is closed under extensions. Then,
by Theorem \ref{thm:(4.2homologia_relativa)}, we have 
\[
\idr[\mathcal{A}\cap\mathcal{X}][X]\leq\idr[\mathcal{A}\cap\mathcal{X}][\mathcal{B}\cap\mathcal{X}]+\coresdimr{\mathcal{B}}X{\mathcal{X}}=\coresdimr{\mathcal{B}}X{\mathcal{X}}\,\forall X\in\mathcal{X}\mbox{,}
\]
and by (\ref{eq:2-2}), it follows (c).

Finally, since $\coresdimx{\mathcal{B}}X\leq\idr[\mathcal{A}\cap\mathcal{X}][X]\leq\idr[\mathcal{A}][X]$
$\forall X\in\mathcal{X}$ by (\ref{eq:2-2}), the proof of (d) is
similar to the proof of (b). Indeed, since $\p$ is right $\mathcal{X}$-complete,
for any $X\in\mathcal{X}$, there is an exact sequence 
\[
\suc[X][B][A]\mbox{, with }B\in\mathcal{B}\cap\mathcal{X}\mbox{ and }A\in\mathcal{A}\cap\mathcal{X}.
\]
Then, by Lemma \ref{lem:(4.7/51homologia_relativa)} and Theorem \ref{thm:-Para-},
we have 
\begin{alignat*}{1}
\idr[\mathcal{A}][X] & \leq\max\left\{ \idr[\mathcal{A}][B],\idr[\mathcal{A}][A]+1\right\} \\
 & \leq\max\left\{ \idr[\mathcal{A}][\mathcal{B}],\idr[\mathcal{A}][\mathcal{A}]+1\right\} \\
 & \leq\max\left\{ \idr[\mathcal{A}][\mathcal{B}],\idr[\mathcal{A}][\mathcal{B}]+\coresdimx{\mathcal{B}}{\mathcal{A}}+1\right\} \\
 & =\idr[\mathcal{A}][\mathcal{B}]+\coresdimx{\mathcal{B}}{\mathcal{A}}+1\mbox{.}
\end{alignat*}
\end{proof}

\begin{cor} \label{cor:(4.9homologia_relativa)} For a closed under extensions class $\mathcal{X}\subseteq\mathcal{C}$ and a right $\mathcal{X}$-complete pair $\p$ in $\C$ which is
$\mathcal{X}$-hereditary and $\mathcal{B}=\smdx[\mathcal{B}],$ the following statements
hold true.
\begin{enumerate}
\item[$\mathrm{(a)}$] $\idr[\mathcal{A}\cap\mathcal{X}][M]=\coresdimr{\mathcal{B}}M{\mathcal{X}}$
$\forall M\in(\mathcal{X},\mathcal{B})^{\vee}.$

\item[$\mathrm{(b)}$] Let $\left(\mathcal{A}\cap\mathcal{X}\right)^{\bot}\cap\mathcal{A}\cap\mathcal{X}\subseteq\mathcal{B}\cap\mathcal{X}.$ Then, for any  $X\in\mathcal{X},$ we have
\begin{alignat*}{1}
\coresdimr{\mathcal{B}}X{\mathcal{X}} & =\coresdimr{\mathcal{B}\cap\mathcal{X}}X{\mathcal{X}}\\
 & =\idr[\mathcal{A}\cap\mathcal{X}][X]\\
 & =\coresdimr{\mathcal{B}\cap\mathcal{X}}X{\,}\\
 & \leq\coresdimr{\mathcal{B\cap\mathcal{X}}}{\mathcal{A}\cap\mathcal{X}}{\mbox{ }}+1.
\end{alignat*}
\end{enumerate}
\end{cor}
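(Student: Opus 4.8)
The plan is to prove the two parts separately: part (b) falls out almost immediately from the preceding Proposition \ref{thm:(4.8homologia_relativa)-1}, whereas part (a) requires a genuine truncation argument because its extra containment hypothesis has been dropped.

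For (b), I would first note that its hypotheses are precisely those of Proposition \ref{thm:(4.8homologia_relativa)-1}: the pair $\p$ is right $\mathcal{X}$-complete, $\mathcal{X}$-hereditary, satisfies $\mathcal{B}=\smdx[\mathcal{B}]$, and the containment $(\mathcal{A}\cap\mathcal{X})^{\bot}\cap\mathcal{A}\cap\mathcal{X}\subseteq\mathcal{B}\cap\mathcal{X}$ is now assumed. Thus Proposition \ref{thm:(4.8homologia_relativa)-1}(b) yields $\coresdimr{\mathcal{B}\cap\mathcal{X}}X{\mathcal{X}}=\idr[\mathcal{A}\cap\mathcal{X}][X]=\coresdimx{\mathcal{B}\cap\mathcal{X}}X\leq\coresdimx{\mathcal{B}\cap\mathcal{X}}{\mathcal{A}\cap\mathcal{X}}+1$ for $X\in\mathcal{X}$, while Proposition \ref{thm:(4.8homologia_relativa)-1}(c), valid because $\mathcal{X}$ is closed under extensions, adds $\coresdimr{\mathcal{B}}X{\mathcal{X}}=\coresdimr{\mathcal{B}\cap\mathcal{X}}X{\mathcal{X}}$. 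Concatenating these gives the full chain of (b) verbatim, so no further work is needed.

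For (a), set $d:=\idr[\mathcal{A}\cap\mathcal{X}][M]$ and $m:=\coresdimr{\mathcal{B}}M{\mathcal{X}}$; the goal is $d=m$. The inequality $d\leq m$ I would read off from Theorem \ref{thm:(4.2homologia_relativa)} applied with $\mathcal{Y}=\mathcal{B}$ and $\mathcal{Z}=\mathcal{X}$: since $\mathcal{X}$ is closed under extensions and $M\in\mathcal{X}$, it gives $\idr[\mathcal{A}\cap\mathcal{X}][M]\leq\idr[\mathcal{A}\cap\mathcal{X}][\mathcal{B}\cap\mathcal{X}]+\coresdimr{\mathcal{B}}M{\mathcal{X}}$, and the $\mathcal{X}$-hereditary hypothesis makes the first summand zero. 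Because $M\in(\mathcal{X},\mathcal{B})^{\vee}$ forces $m<\infty$, this already shows $d<\infty$.

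The reverse inequality $m\leq d$ is the crux. I would fix a $\mathcal{B}_{\mathcal{X}}$-coresolution $0\to M\to Y_0\to\cdots\to Y_m\to 0$ of minimal length $m$ and split it into short exact sequences $\suc[M_i][Y_i][M_{i+1}]$ with $M_0=M$, $M_m=Y_m$, and each cosyzygy $M_i\in\mathcal{X}$. Two observations drive the argument. First, every $Y_i$ in fact lies in $\mathcal{B}\cap\mathcal{X}$: it is in $\mathcal{B}$ by construction, and since $M_i,M_{i+1}\in\mathcal{X}$ with $\mathcal{X}$ closed under extensions, $Y_i\in\mathcal{X}$. Second, the containment $(\mathcal{A}\cap\mathcal{X})^{\bot}\cap\mathcal{X}\subseteq\mathcal{B}$ holds, which I would re-derive as in the $n=0$ step of Proposition \ref{thm:(4.8homologia_relativa)-1}: for $X$ in the left-hand class, right $\mathcal{X}$-completeness gives $\suc[X][B][A]$ with $B\in\mathcal{B}\cap\mathcal{X}$ and $A\in\mathcal{A}\cap\mathcal{X}$, which splits because $\Extx[1][][A][X]=0$, whence $X\in\smdx[\mathcal{B}]=\mathcal{B}$; crucially this uses only right $\mathcal{X}$-completeness and $\mathcal{B}=\smdx[\mathcal{B}]$, so it remains available without the extra containment of (b). Now I truncate at level $d$: applying the dual of the Shifting Lemma \ref{lem:recorrimiento de dimension} to $0\to M\to Y_0\to\cdots\to Y_{d-1}\to M_d\to 0$, whose terms $Y_0,\dots,Y_{d-1}$ lie in $\mathcal{B}\cap\mathcal{X}\subseteq(\mathcal{A}\cap\mathcal{X})^{\bot}$ by $\mathcal{X}$-hereditariness, gives $\Extx[k][][A][M_d]\cong\Extx[k+d][][A][M]=0$ for all $k\geq1$ and $A\in\mathcal{A}\cap\mathcal{X}$, the vanishing holding because $k+d>d$. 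Hence $M_d\in(\mathcal{A}\cap\mathcal{X})^{\bot}\cap\mathcal{X}\subseteq\mathcal{B}$, so $0\to M\to Y_0\to\cdots\to Y_{d-1}\to M_d\to 0$ is a $\mathcal{B}_{\mathcal{X}}$-coresolution of length $d$, yielding $m\leq d$ and therefore $d=m$ (for $d=0$ one reads $M_0=M\in(\mathcal{A}\cap\mathcal{X})^{\bot}\cap\mathcal{X}\subseteq\mathcal{B}$ directly). I expect the only delicate point to be the bookkeeping that all intermediate terms and cosyzygies remain inside $\mathcal{X}$, which is exactly what makes the Shifting Lemma and the containment $(\mathcal{A}\cap\mathcal{X})^{\bot}\cap\mathcal{X}\subseteq\mathcal{B}$ applicable; once the closure of $\mathcal{X}$ under extensions is exploited for this, the truncation runs without difficulty.
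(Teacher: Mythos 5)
Your proposal is correct, and part (b) is handled exactly as in the paper (by quoting Proposition \ref{thm:(4.8homologia_relativa)-1}(b,c)). For part (a), however, you take a genuinely different route. The paper argues by induction on $n=\coresdimr{\mathcal{B}}M{\mathcal{X}}$: it first shows $\idr[\mathcal{A}\cap\mathcal{X}][M]\geq 1$ whenever $n>0$ by observing that the right $\mathcal{X}$-completeness sequence $\suc[M][B][A]$ cannot split (using $\mathcal{B}=\smdx[\mathcal{B}]$), and then sandwiches $\idr[\mathcal{A}\cap\mathcal{X}][M]$ between $n-1$ and $n$ at each inductive step via Lemma \ref{lem:(4.7/51homologia_relativa)}. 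You instead prove the two inequalities directly: $\idr[\mathcal{A}\cap\mathcal{X}][M]\leq\coresdimr{\mathcal{B}}M{\mathcal{X}}$ from Theorem \ref{thm:(4.2homologia_relativa)} and $\mathcal{X}$-hereditariness, and the reverse by truncating a minimal $\mathcal{B}_{\mathcal{X}}$-coresolution at level $d=\idr[\mathcal{A}\cap\mathcal{X}][M]$, using the dual Shifting Lemma to place the $d$-th cosyzygy in $(\mathcal{A}\cap\mathcal{X})^{\bot}\cap\mathcal{X}$ and the containment $(\mathcal{A}\cap\mathcal{X})^{\bot}\cap\mathcal{X}\subseteq\mathcal{B}$, which you correctly re-derive from right $\mathcal{X}$-completeness and $\mathcal{B}=\smdx[\mathcal{B}]$ alone (so that the extra hypothesis of part (b) is not smuggled in). Your bookkeeping is sound: the closure of $\mathcal{X}$ under extensions does put every term $Y_i$ of the coresolution in $\mathcal{B}\cap\mathcal{X}$, which is what makes both the Shifting Lemma and the truncation legitimate. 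The trade-off is that your argument makes the structural reason for the equality more transparent (the $d$-th cosyzygy is forced into $\mathcal{B}$ by orthogonality), and it mirrors the technique already used to prove inequality (4.A) inside Proposition \ref{thm:(4.8homologia_relativa)-1}, whereas the paper's induction is more elementary in that it never needs the full orthogonal class $(\mathcal{A}\cap\mathcal{X})^{\bot}$, only $\Ext^{1}$-vanishing and the splitting criterion.
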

\begin{proof} (a)  Let $M\in\mathcal{B}_{\mathcal{X}}^{\vee}\cap\mathcal{X}$. We proceed
by induction on $n:=\coresdimr{\mathcal{B}}M{\mathcal{X}}$.\\
If $n=0$ then $M\cong N\in\mathcal{B}\cap\mathcal{X}$. Thus $\idr[\mathcal{A}\cap\mathcal{X}][M]=0$
since
 $\idr[\mathcal{A}\cap\mathcal{X}][\mathcal{B}\cap\mathcal{X}]=0$.
\

We assert that  $\idr[\mathcal{A}\cap\mathcal{X}][M]\geq1$ if $n>0.$
Indeed, since $\p$ is right $\mathcal{X}$-complete and $M\in\mathcal{X}$,
there is an exact sequence 
\[
\suc[M][B][A]\mbox{, with }B\in\mathcal{B}\cap\mathcal{X}\mbox{ and }A\in\mathcal{A}\cap\mathcal{X}.
\]
Note that this sequence does not split since $M\notin\mathcal{B}\cap\mathcal{X}$
 ($\coresdimr{\mathcal{B}}M{\mathcal{X}}=n>0$). Hence $\Extx[1][][A][M]\neq0$
and thus $\idr[\mathcal{A}\cap\mathcal{X}][M]\geq1;$ proving our assertion.
\

Let $n=1$. Since $\X$ is closed under extensions, there is an exact sequence 
\[
\suc[M][Y_{0}][Y_{1}]\mbox{, with }Y_{0},Y_{1}\in\mathcal{B}\cap\mathcal{X}\mbox{.}
\]
By Lemma \ref{lem:(4.7/51homologia_relativa)}, $\idr[\mathcal{A}\cap\mathcal{X}][M]\leq\max\left\{ \idr[\mathcal{A}\cap\mathcal{X}][Y_{0}],\idr[\mathcal{A}\cap\mathcal{X}][Y_{1}]+1\right\} =1\mbox{.}$
Therefore, $\idr[\mathcal{A}\cap\mathcal{X}][M]=1$ since $\idr[\mathcal{A}\cap\mathcal{X}][M]\geq1$.\\
Let $n>1$. By inductive hypothesis, $\idr[\mathcal{A}\cap\mathcal{X}][N]=\coresdimr{\mathcal{B}}N{\mathcal{X}}$
for any $N\in\mathcal{X}$ with $\coresdimr{\mathcal{B}}N{\mathcal{X}}\leq n-1$.
Since $\mathcal{X}$ is closed under extensions, we have an exact sequence
\[
0\rightarrow M\rightarrow Y_{0}\overset{f_{1}}{\rightarrow}Y_{1}\rightarrow\cdots\overset{f_{n}}{\rightarrow}Y_{n}\rightarrow0\mbox{,}
\]
 with $Y_{n}\in\mathcal{B}\cap\mathcal{X}$, $Y_{i}\in\mathcal{\mathcal{B}}\cap\mathcal{X}$,
$\im{f_{i}}\in\mathcal{X}$ $\forall i\in[0,n-1]$. Moreover, for
$K:=\im{f_{1}}$, we have $\coresdimr{\mathcal{B}}K{\mathcal{X}}=n-1$.
Consider the exact sequence 
\[
\suc[M][Y_{0}][K]\mbox{.}
\]
Note that $\idr[\mathcal{A}\cap\mathcal{X}][K]=\coresdimr{\mathcal{B}}K{\mathcal{X}}=n-1$
by inductive hypothesis. Hence, by Lemma \ref{lem:(4.7/51homologia_relativa)},
we have 
\begin{alignat*}{1}
\idr[\mathcal{A\cap\mathcal{X}}][M] & \leq\max\left\{ \idr[\mathcal{A\cap\mathcal{X}}][Y_{0}],\idr[\mathcal{A}\cap\mathcal{X}][K]+1\right\} =n\mbox{,}\quad\mbox{ and }\\
n-1 & =\idr[\mathcal{A}\cap\mathcal{X}][K]\leq\max\left\{ \idr[\mathcal{A\cap\mathcal{X}}][Y_{0}],\idr[\mathcal{A}\cap\mathcal{X}][M]-1\right\} =\idr[\mathcal{A}\cap\mathcal{X}][M]-1
\end{alignat*}
since $\idr[\mathcal{A\cap\mathcal{X}}][M]\geq1$. Therefore, $\idr[\mathcal{A}\cap\mathcal{X}][M]=n.$
\

(b) It follows from Proposition \ref{thm:(4.8homologia_relativa)-1}(c,b).
\end{proof}

Before going any further in the study of $\mathcal{X}$-complete pairs,
we recall from \cite{ABsurvey}  the notion of thick class. We will be interested in reviewing
the 
main
 properties of this kind of classes with regard to their
generators and cogenerators. 

\begin{defn}
\label{def: thick gruesa}\cite[Def. 2.2]{ABsurvey} Let $\mathcal{C}$
be an abelian category and $\mathcal{X}\subseteq\mathcal{C}.$ 
\begin{enumerate}
\item[$\mathrm{(a)}$] $\mathcal{X}$ is \textbf{right thick} if it is closed
under extensions, direct summands and mono-cokernels.
\item[$\mathrm{(b)}$] $\mathcal{X}$ is \textbf{left thick} if it is closed
under extensions, direct summands and epi-kernels.
\item[$\mathrm{(c)}$] $\mathcal{X}$ is  \textbf{thick} if it is right  and left
thick.
\end{enumerate}
\end{defn}

\begin{lem} \label{lem:lemitia auslander reiten} Let $\omega\subseteq\X\subseteq\C$ with $\X$ closed under epi-kernels.  Then $\omega^{\vee}\subseteq\mathcal{X}$.
\end{lem}

\begin{proof} 
The 
proof is straightforward and we left it to the reader.
\end{proof}

The next Lemma is a generalization of \cite[Prop. 2.7]{ABsurvey}.
The only difference between them is a subtle precision on the kind of the resolutions
that are used. 

\begin{lem} \label{lem:(5.1homologia_relativa)} For the classes $\omega,\X\subseteq\mathcal{C}$ such that $\omega$ is $\mathcal{X}$-injective, the following statements hold true. 
\begin{enumerate}
\item[$\mathrm{(a)}$] $\omega^{\wedge}$ is $\mathcal{X}$-injective.
\item[$\mathrm{(b)}$] If $\omega=\smdx[\omega]$ is a relative cogenerator in $\mathcal{X},$
then 
\[
\mathcal{X}\cap\mathcal{X}^{\bot}=\mathcal{X}\cap\omega^{\wedge}=(\omega,\mathcal{X})^{\wedge}=\omega\mbox{.}
\]
\end{enumerate}
\end{lem}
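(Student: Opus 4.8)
The plan is to establish (a) first by induction on the length of an $\omega$-resolution, and then to derive (b) by proving a cyclic chain of four inclusions whose composite is the identity, so that the four classes must coincide. The only genuinely substantive point is the closing inclusion $\mathcal{X}\cap\mathcal{X}^{\bot}\subseteq\omega$; the rest is formal bookkeeping with the definitions of $\omega^{\wedge}$, $(\omega,\mathcal{X})^{\wedge}$, and $\mathcal{X}$-injectivity.

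For (a), I would take $C\in\omega^{\wedge}$ with a finite resolution $0\to W_n\to\cdots\to W_0\to C\to 0$, $W_i\in\omega$, and show $\Ext^k_\C(X,C)=0$ for every $X\in\mathcal{X}$ and $k\geq 1$. The base case $n=0$ is precisely the hypothesis that $\omega$ is $\mathcal{X}$-injective. For the inductive step I would split off the short exact sequence $0\to K\to W_0\to C\to 0$, where $K:=\Kerx[W_0\to C]$ inherits an $\omega$-resolution of length $n-1$, hence $K\in\omega^{\wedge}$ is $\mathcal{X}$-injective by induction. Applying $\Hom_\C(X,-)$ and reading off the induced long exact sequence, the term $\Ext^k_\C(X,W_0)$ vanishes since $W_0\in\omega$, and $\Ext^{k+1}_\C(X,K)$ vanishes by the inductive hypothesis, forcing $\Ext^k_\C(X,C)=0$.

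For (b), under the standing hypotheses $\omega=\add(\omega)$, $\omega$ a relative cogenerator in $\mathcal{X}$ (so in particular $\omega\subseteq\mathcal{X}$), and $\omega$ $\mathcal{X}$-injective, I would prove the chain $\omega\subseteq(\omega,\mathcal{X})^{\wedge}\subseteq\mathcal{X}\cap\omega^{\wedge}\subseteq\mathcal{X}\cap\mathcal{X}^{\bot}\subseteq\omega$. The first inclusion is immediate: each $W\in\omega\subseteq\mathcal{X}$ admits the trivial length-$0$ resolution $0\to W\to W\to 0$ with $W\in\mathcal{X}\cap\omega$, so $W\in(\omega,\mathcal{X})^{\wedge}$. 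The second follows because a finite $\omega_{\mathcal{X}}$-resolution is in particular a finite $\omega$-resolution (all its terms lie in $\omega$), giving $\omega_{\mathcal{X}}^{\wedge}\subseteq\omega^{\wedge}$ and hence $(\omega,\mathcal{X})^{\wedge}=\omega_{\mathcal{X}}^{\wedge}\cap\mathcal{X}\subseteq\mathcal{X}\cap\omega^{\wedge}$. The third inclusion is a restatement of part (a): it says $\omega^{\wedge}$ is $\mathcal{X}$-injective, i.e. $\omega^{\wedge}\subseteq\mathcal{X}^{\bot}$, whence $\mathcal{X}\cap\omega^{\wedge}\subseteq\mathcal{X}\cap\mathcal{X}^{\bot}$.

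The crux is the last inclusion $\mathcal{X}\cap\mathcal{X}^{\bot}\subseteq\omega$, and this is where I expect the main obstacle: one must see that the cogenerator sequence can be forced to split. Given $C\in\mathcal{X}\cap\mathcal{X}^{\bot}$, the cogenerator property yields an exact sequence $\suc[C][W][C']$ with $W\in\omega$ and $C'\in\mathcal{X}$. Since $C'\in\mathcal{X}$ and $C\in\mathcal{X}^{\bot}$, we get $\Ext^1_\C(C',C)=0$, so this sequence splits and realizes $C$ as a direct summand of $W\in\omega$; closure of $\omega=\add(\omega)$ under direct summands then gives $C\in\omega$. This closes the cycle, forcing equality throughout and completing the proof.
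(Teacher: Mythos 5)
Your proposal is correct and takes essentially the same route as the paper: part (b) is obtained from the identical cyclic chain of inclusions $\omega\subseteq(\omega,\mathcal{X})^{\wedge}\subseteq\mathcal{X}\cap\omega^{\wedge}\subseteq\mathcal{X}\cap\mathcal{X}^{\bot}\subseteq\omega$, with the same splitting argument (via the cogenerator sequence and $\omega=\add(\omega)$) for the crucial last inclusion. The only cosmetic difference is in (a), where the paper simply cites the dual of Lemma \ref{lem:(4.10homologia_relativa)} to get $\id_{\mathcal{X}}(\omega^{\wedge})=\id_{\mathcal{X}}(\omega)=0$, while you reprove that fact by the standard dimension-shifting induction on the length of the $\omega$-resolution.
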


\begin{proof} 
Item 
(a) follows from the dual of  Lemma \ref{lem:(4.10homologia_relativa)}. Let us show (b). Indeed, from (a),  it is clear that $(\omega,\mathcal{X})^{\wedge}\subseteq\mathcal{X}\cap\omega^{\wedge}\subseteq\mathcal{X}\cap\mathcal{X}^{\bot}$.
\

Let $X\in\mathcal{X}\cap\mathcal{X}^{\bot}$. Since $\omega$
is a relative cogenerator in $\mathcal{X}$, there is an exact sequence $\eta:\;\suc[X][W][X'],$  with $W\in\omega$  and $X'\in\mathcal{X}.$
Moreover, the exact sequence $\eta$  splits since $X\in\mathcal{X}^{\bot}$ and thus $X\in\omega$. Therefore, 
\[
(\omega,\mathcal{X})^{\wedge}\mathcal{\subseteq X}\cap\omega^{\wedge}\subseteq\mathcal{X}\cap\mathcal{X}^{\bot}\subseteq\omega\mbox{.}
\]
Finally  $\omega\subseteq(\omega,\mathcal{X})^{\wedge}$ since $\omega\subseteq\mathcal{X}$ and $0\in\omega.$
\end{proof}

The following lemma is a compilation of results from the Auslander-Buchweitz theory, see \cite[Prop. 4.2, Lem. 4.3 and Prop. 4.7]{Auslander-Buchweitz}. 

\begin{lem} \label{lem:(5.3homologia_relativa)}
For the classes $\omega,\X\subseteq\mathcal{C}$ such that
 $\omega=\smdx[\omega]$ is an $\mathcal{X}$-injective relative cogenerator
in $\mathcal{X},$ the following statements hold true.
\begin{enumerate}
\item[$\mathrm{(a)}$] $\mathcal{X}\cap\omega^{\vee}=\left\{ X\in\mathcal{X}\,|\:\idr[\mathcal{X}][X]<\infty\right\}.$
\item[$\mathrm{(b)}$] $\idr[\mathcal{X}][M]=\coresdimx{\omega}M$ $\forall M\in\mathcal{X}\cap\omega^{\vee}.$
\item[$\mathrm{(c)}$] If $\mathcal{X}$ is left thick, then $\omega^{\vee}$ is left thick
and 
\[
\omega^{\vee}=\left\{ X\in\mathcal{X}\,|\:\idr[\mathcal{X}][X]<\infty\right\} \mbox{.}
\]
\end{enumerate}
\end{lem}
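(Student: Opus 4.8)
The plan is to prove part (a) together with the inequality $\coresdimx{\omega}M\leq\idr[\mathcal{X}][M]$ by a single induction, then obtain (b) by combining this with Theorem \ref{thm:-Para-}, and finally derive (c) from the characterization furnished by (a).

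One inclusion in (a) is immediate from Theorem \ref{thm:-Para-}. Since $\omega$ is $\mathcal{X}$-injective we have $\idr[\mathcal{X}][\omega]=0$, so taking $\mathcal{Y}=\omega$ gives $\idr[\mathcal{X}][M]\leq\idr[\mathcal{X}][\omega]+\coresdimx{\omega}M=\coresdimx{\omega}M$ for every $M$. In particular $\idr[\mathcal{X}][M]<\infty$ whenever $M\in\omega^{\vee}$, which yields $\mathcal{X}\cap\omega^{\vee}\subseteq\{X\in\mathcal{X}:\idr[\mathcal{X}][X]<\infty\}$ and the inequality $\idr[\mathcal{X}][M]\leq\coresdimx{\omega}M$.

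The substance of the argument is the reverse inclusion, which I would establish by induction on $n:=\idr[\mathcal{X}][X]$ for $X\in\mathcal{X}$, simultaneously producing an $\omega$-coresolution of $X$ of length $\leq n$. For the base case $n=0$ one has $X\in\mathcal{X}\cap\mathcal{X}^{\bot}$; choosing a relative cogenerator sequence $\suc[X][W][X']$ with $W\in\omega$ and $X'\in\mathcal{X}$, the fact that $\Extx[1][][X'][X]=0$ forces it to split, so $X$ is a direct summand of $W$ and hence $X\in\omega=\smdx[\omega]$. For $n\geq 1$ I take the same kind of sequence and apply $\Homx[][X''][-]$ for an arbitrary $X''\in\mathcal{X}$; because $W\in\omega\subseteq\mathcal{X}^{\bot}$, the associated long exact sequence collapses to isomorphisms $\Extx[k][][X''][X']\cong\Extx[k+1][][X''][X]$ for all $k\geq 1$, whence $\idr[\mathcal{X}][X']\leq n-1$. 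The inductive hypothesis then supplies an $\omega$-coresolution of $X'$ of length $\leq n-1$, and splicing it onto $\suc[X][W][X']$ produces one of length $\leq n$ for $X$. This shows $\{X\in\mathcal{X}:\idr[\mathcal{X}][X]<\infty\}\subseteq\mathcal{X}\cap\omega^{\vee}$, completing (a), and at the same time gives $\coresdimx{\omega}X\leq\idr[\mathcal{X}][X]$; combined with the previous paragraph this proves the equality in (b).

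For (c), I would first observe that a left thick $\mathcal{X}$ is in particular closed under epi-kernels, so Lemma \ref{lem:lemitia auslander reiten} gives $\omega^{\vee}\subseteq\mathcal{X}$; hence $\omega^{\vee}=\mathcal{X}\cap\omega^{\vee}=\{X\in\mathcal{X}:\idr[\mathcal{X}][X]<\infty\}$ by (a). Left thickness of $\omega^{\vee}$ then amounts to showing that finiteness of $\idr[\mathcal{X}][-]$ passes to extensions, epi-kernels and direct summands: the first two follow from parts (1) and (2) of Lemma \ref{lem:(4.7/51homologia_relativa)}, and the third from the additivity of $\mathrm{Ext}$, with ambient membership in $\mathcal{X}$ guaranteed by the left thickness of $\mathcal{X}$. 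I expect the main obstacle to be the inductive step of the reverse inclusion in (a): one must simultaneously check that the relative cogenerator sequence splits in the base case and that the dimension-shifting isomorphism lowers $\idr[\mathcal{X}][-]$ by exactly one at each stage, so that the recursion halts after $n$ steps rather than merely staying finite.
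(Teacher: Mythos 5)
Your proposal is correct, and it is essentially the argument this paper relies on: the lemma itself is only cited from \cite[Lemma 4.2]{Auslander-Buchweitz}, but the proof the paper gives for its generalization (Lemma \ref{lem:gen aus id vs cores con cogenerador}) uses exactly your two ingredients --- the upper bound $\idr[\mathcal{X}][M]\leq\coresdimx{\omega}M$ via Theorem \ref{thm:-Para-}, and dimension-shifting along cogenerator sequences $\suc[X][W][X']$ with the splitting argument identifying the tail with an object of $\omega=\smdx[\omega]$. The only cosmetic difference is that you run a downward induction on $\idr[\mathcal{X}][X]$, whereas the paper builds the length-$n$ coresolution in one pass and then checks its last cokernel lies in $\mathcal{X}\cap\mathcal{X}^{\bot}=\omega$; your treatment of (c) via Lemma \ref{lem:lemitia auslander reiten} and Lemma \ref{lem:(4.7/51homologia_relativa)} likewise matches the paper's.
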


The following result is a generalization of the previous lemma and will play an important role in the proof of Theorem \ref{thm:el reemplazo }. 

\begin{lem}\label{lem:gen aus id vs cores con cogenerador} For the classes
$\p[\mathcal{W}][\nu]\subseteq\mathcal{C}^{2}$
and $\mathcal{X}\subseteq\mathcal{C}$  such that $\nu=\smdx[\nu]$
is a $\mathcal{W}\cap\mathcal{X}$-injective relative cogenerator
in $\mathcal{W}\cap\mathcal{X},$ the following statements hold true.
\begin{enumerate}
\item[$\mathrm{(a)}$] $\mathcal{W}\cap\mathcal{X}\cap\nu{}_{\mathcal{X}}^{\vee}=\left\{ W\in\mathcal{W}\cap\mathcal{X}\,|\:\idr[\mathcal{W}\cap\mathcal{X}][W]<\infty\right\}.$
\item[$\mathrm{(b)}$] $\idr[\mathcal{W}\cap\mathcal{X}][M]=\coresdimr{\nu}M{\mathcal{X}}$
$\forall M\in\mathcal{W}\cap\mathcal{X}\cap\nu_{\mathcal{X}}^{\vee}.$
\item[$\mathrm{(c)}$] If $\mathcal{W}\cap\mathcal{X}$ is left thick, then $\nu_{\mathcal{X}}^{\vee}$
is left thick and 
\[
\nu_{\mathcal{X}}^{\vee}=\left\{ W\in\mathcal{W}\cap\mathcal{X}\,|\:\idr[\mathcal{W}\cap\mathcal{X}][W]<\infty\right\} \mbox{.}
\]
\end{enumerate}
\end{lem}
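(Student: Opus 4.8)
Throughout write $\mathcal{U}:=\mathcal{W}\cap\mathcal{X}$, so that $\nu=\smdx[\nu]\subseteq\mathcal{U}$ is a $\mathcal{U}$-injective relative cogenerator in $\mathcal{U}$; the whole statement is the analogue of Lemma \ref{lem:(5.3homologia_relativa)} with $\mathcal{U}$ in the role of the ambient class, the only new point being that the coresolutions are measured relative to $\mathcal{X}$. The plan rests on two observations. First, since $\idr[\mathcal{U}][\nu]=0$, Theorem \ref{thm:-Para-} gives $\idr[\mathcal{U}][M]\leq\coresdimx{\nu}M\leq\coresdimr{\nu}M{\mathcal{X}}$ for every $M$, where the last inequality is the general comparison between the absolute and the $\mathcal{X}$-relative coresolution dimension. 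Second, I will show that every $M\in\mathcal{U}$ with $\idr[\mathcal{U}][M]=n<\infty$ admits a $\nu_{\mathcal{X}}$-coresolution of length $\leq n$, so that $\coresdimr{\nu}M{\mathcal{X}}\leq\idr[\mathcal{U}][M]$.

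The construction is by induction on $n=\idr[\mathcal{U}][M]$. If $n=0$ then $M\in\mathcal{U}\cap\mathcal{U}^{\bot}$; taking a relative cogenerator sequence $\suc[M][N][M']$ with $N\in\nu$ and $M'\in\mathcal{U}$, the hypothesis $M\in\mathcal{U}^{\bot}$ forces $\Extx[1][][M'][M]=0$, so the sequence splits and $M\in\smdx[\nu]=\nu$; hence $\coresdimr{\nu}M{\mathcal{X}}=0$. If $n\geq1$, the same sequence together with $\idr[\mathcal{U}][N]=0$ and Lemma \ref{lem:(4.7/51homologia_relativa)}(3) yields $\idr[\mathcal{U}][M']\leq n-1$; splicing the inductively obtained $\nu_{\mathcal{X}}$-coresolution of $M'$ onto $\suc[M][N][M']$ gives a $\nu_{\mathcal{X}}$-coresolution of $M$ of length $\leq n$ whose first image is $M'\in\mathcal{U}\subseteq\mathcal{X}$ and whose remaining images and terminal term are inherited from the coresolution of $M'$. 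Combining this with the first observation gives $\idr[\mathcal{U}][M]=\coresdimr{\nu}M{\mathcal{X}}$ for all $M\in\mathcal{U}\cap\nu_{\mathcal{X}}^{\vee}$, which is (b); and (a) follows at once, the inclusion $\subseteq$ from the inequality of the first observation and $\supseteq$ from the construction.

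For (c), suppose $\mathcal{U}$ is left thick. Since $\mathcal{U}$ is then closed under epi-kernels and $\nu\subseteq\mathcal{U}$, Lemma \ref{lem:lemitia auslander reiten} gives $\nu^{\vee}\subseteq\mathcal{U}$; as every $\nu_{\mathcal{X}}$-coresolution is in particular a $\nu$-coresolution, we have $\nu_{\mathcal{X}}^{\vee}\subseteq\nu^{\vee}\subseteq\mathcal{U}$, whence $\nu_{\mathcal{X}}^{\vee}=\mathcal{U}\cap\nu_{\mathcal{X}}^{\vee}=\left\{ W\in\mathcal{U}\,:\,\idr[\mathcal{U}][W]<\infty\right\}$ by (a). Left thickness of $\nu_{\mathcal{X}}^{\vee}$ is then immediate from this description: closure under extensions, direct summands and epi-kernels is inherited from $\mathcal{U}$, while finiteness of the relevant injective dimensions is preserved by Lemma \ref{lem:(4.7/51homologia_relativa)}(1),(2) and by the fact that $\idr[\mathcal{U}][-]$ of a direct summand never exceeds that of the whole.

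The main obstacle will be the bookkeeping in the inductive step: one must check that the spliced complex genuinely meets the definition of a $\nu_{\mathcal{X}}$-coresolution, i.e.\ that every intermediate image lies in $\mathcal{X}$ and the terminal term lies in $\mathcal{X}\cap\nu=\nu$. This is exactly where the two standing hypotheses enter: $\nu=\smdx[\nu]$ for the splitting in the base case, and the relative cogenerator property to guarantee that the successive cokernels stay inside $\mathcal{U}\subseteq\mathcal{X}$. Everything else is a faithful transcription of the Auslander--Buchweitz argument behind Lemma \ref{lem:(5.3homologia_relativa)}.
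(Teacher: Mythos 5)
Your proof is correct and follows essentially the same route as the paper: the inequality $\idr[\mathcal{W}\cap\mathcal{X}][M]\leq\coresdimr{\nu}M{\mathcal{X}}$ via Theorem \ref{thm:-Para-}, the reverse inequality by iterating the relative cogenerator sequences and using the shifting lemma together with the splitting argument (where $\nu=\smdx[\nu]$ enters) to see the final cokernel lands in $\nu$, and part (c) via Lemma \ref{lem:lemitia auslander reiten} and the description from (a). The only cosmetic difference is that you organize the second inequality as an induction with the splitting at the base case, whereas the paper builds the length-$n$ coresolution in one pass and identifies its end term through Lemma \ref{lem:(5.1homologia_relativa)}(b); the content is the same.
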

\begin{proof}
Let $M\in\mathcal{W}\cap\mathcal{X}\cap\nu_{\mathcal{X}}^{\vee}$.
By Theorem \ref{thm:(4.2homologia_relativa)}, we have 
\[
\idr[\mathcal{W}\cap\mathcal{X}][M]\leq\idr[\mathcal{W}\cap\mathcal{X}][\nu]+\coresdimr{\nu}M{\mathcal{X}}=\coresdimr{\nu}M{\mathcal{X}}<\infty\mbox{.}
\]
For every $M\in\mathcal{W}\cap\mathcal{X}$ with $\idr[\mathcal{W}\cap\mathcal{X}][M]=n<\infty$,
we have $\coresdimr{\nu}M{\mathcal{X}}\leq n$. Indeed, since $\nu$
is a relative cogenerator in $\mathcal{W}\cap\mathcal{X}$, there
is an exact sequence 
\[
\eta:\:\suc[M][N_{0}][Z][\,][\,]\mbox{, with }N_{0}\in\nu\mbox{ and }Z\in\mathcal{W}\cap\mathcal{X}.
\]
 Now, if $n=0$ then $\eta$ splits and thus $\coresdimr{\nu}M{\mathcal{X}}=0$.
Let $n\geq1$. We can build an exact sequence 
\[
0\rightarrow M\rightarrow N_{0}\overset{f_{0}}{\rightarrow}N_{1}\rightarrow\cdots\rightarrow N_{n-2}\overset{f_{n-2}}{\rightarrow}N_{n-1}\rightarrow Z\rightarrow0\mbox{,}
\]
with $N_{i}\in\nu\;\forall i\in[0,n-1]$, $\im{f_{i}}\in\mathcal{W}\cap\mathcal{X}\;\forall i\in[0,n-2]$,
and $Z\in\mathcal{W}\cap\mathcal{X}$. Hence, by the Shifting Lemma,
it follows that 
\[
\Extx[k][][W][Z]=\Extx[n+k][][W][M]=0\quad\forall\, W\in\mathcal{W}\cap\mathcal{X},\;\forall k>0\mbox{.}
\]
 Therefore, by  Lemma \ref{lem:(5.1homologia_relativa)}(b),  $Z\in\mathcal{W}\cap\mathcal{X}\cap(\mathcal{W}\cap\mathcal{X})^{\bot}=\nu$ and thus
 $\coresdimr{\nu}M{\mathcal{X}}\leq n,$
proving (a) and (b).

Let $\mathcal{W}\cap\mathcal{X}$ be left thick. Then,  by Lemma \ref{lem:lemitia auslander reiten}
we have $\nu^{\vee}\subseteq\mathcal{W}\cap\mathcal{X}$. Therefore, the equality of (c) follows from (a) since 
$\nu_{\mathcal{X}}^{\vee}\subseteq\nu^{\vee}\subseteq\mathcal{W}\cap\mathcal{X}$. Lastly, the left thickness of
 $\nu_X^\vee$ follows from Lemma \ref{lem:(4.7/51homologia_relativa)}. 
\end{proof}

In what follows,  we will be interested in the study of the closure
properties of the classes $(\mathcal{X},\mathcal{Y})_{\infty}^{\vee}$, $(\mathcal{X},\mathcal{Y})^{\vee}$
and $(\mathcal{X},\mathcal{Y})_{n}^{\vee}$. In particular, we will get sufficient conditions for this classes to
be thick. 

\begin{prop}\label{prop:auslander reiten generalizado} For an abelian category $\mathcal{C}$ and $\mathcal{X}$,$\mathcal{Y}\subseteq\mathcal{C},$ the following statements hold true.
\begin{enumerate}
\item[$\mathrm{(a)}$] Let $\mathcal{Y}=\Y^{\oplus_{<\infty}},$  $\mathcal{X}$
be closed under extensions and $(\mathcal{X},\mathcal{Y})_{\infty}^{\vee}\subseteq{}{}^{\bot_{1}}\mathcal{Y}$.
Then, for a given exact sequence $\suc[A][B][C][\,][\,]\mbox{,}$ with
$A,C\in(\mathcal{X},\mathcal{Y})_{\infty}^{\vee}$, we have that
$
\coresdimr{\mathcal{Y}}B{\mathcal{X}}\leq\max\left\{ \coresdimr{\mathcal{Y}}A{\mathcal{X}},\coresdimr{\mathcal{Y}}C{\mathcal{X}}\right\} \mbox{.}
$
Furthermore, $(\mathcal{X},\mathcal{Y})_{\infty}^{\vee}$ and $(\mathcal{X},\mathcal{Y})^{\vee}$
are closed under extensions. 
\item[$\mathrm{(b)}$] Let $\mathcal{X}=\smdx[\mathcal{X}]$ and $(\mathcal{X},\mathcal{Y})_{\infty}^{\vee}$
be both closed under extensions. Then $(\mathcal{X},\mathcal{Y})_{\infty}^{\vee}$
is closed under direct summands.
\end{enumerate}
\end{prop}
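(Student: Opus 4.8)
The engine of both parts is a relative Horseshoe Lemma, so I would begin by recording the two standing reductions. In either part, if $\suc[A][B][C]$ is exact with $A,C\in(\mathcal{X},\mathcal{Y})_{\infty}^{\vee}$, then $A,C\in\mathcal{X}$, and since $\mathcal{X}$ is closed under extensions (in (a)) resp.\ since $(\mathcal{X},\mathcal{Y})_{\infty}^{\vee}$ is closed under extensions (in (b)), the middle term $B$ lands in $\mathcal{X}$; hence only a suitable $\mathcal{Y}_{\mathcal{X}}$-coresolution of $B$ must be produced. Everything then reduces to a one-step splicing device, fed by the first cosyzygy sequences $\suc[A][Y_0^A][A_1]$ and $\suc[C][Y_0^C][C_1]$ of $A$ and $C$.

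For (a), here is the horseshoe step. Because $C\in(\mathcal{X},\mathcal{Y})_{\infty}^{\vee}\subseteq{}^{\bot_1}\mathcal{Y}$ and $Y_0^A\in\mathcal{Y}$, we have $\Ext^1_{\mathcal{C}}(C,Y_0^A)=0$, so $A\to Y_0^A$ extends along $A\to B$; combined with $B\to C\to Y_0^C$ this yields a monomorphism $B\to Y_0^A\oplus Y_0^C$. Since $\mathcal{Y}=\mathcal{Y}^{\oplus_{<\infty}}$, the term $Y_0^B:=Y_0^A\oplus Y_0^C$ lies in $\mathcal{Y}$, and the Nine Lemma gives an exact sequence $\suc[A_1][B_1][C_1]$ of cosyzygies with $A_1,C_1\in(\mathcal{X},\mathcal{Y})_{\infty}^{\vee}$ and $B_1\in\mathcal{X}$. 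The displayed inequality I would prove by induction on $d:=\max\{\coresdimr{\mathcal{Y}}{A}{\mathcal{X}},\coresdimr{\mathcal{Y}}{C}{\mathcal{X}}\}$: when $d=0$ the sequence $\suc[A][B][C]$ splits (as $\Ext^1_{\mathcal{C}}(C,A)=0$), so $B\cong A\oplus C\in\mathcal{X}\cap\mathcal{Y}$; when $d\geq1$ one applies the horseshoe step once, invokes the inductive hypothesis on $\suc[A_1][B_1][C_1]$, and splices. Closure of $(\mathcal{X},\mathcal{Y})^{\vee}$ under extensions is then immediate (finite coresolution dimension stays finite), and closure of $(\mathcal{X},\mathcal{Y})_{\infty}^{\vee}$ follows by iterating the horseshoe step indefinitely to assemble a full infinite $\mathcal{Y}_{\mathcal{X}}$-coresolution of $B$.

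For (b), write $B=B_1\oplus B_2\in(\mathcal{X},\mathcal{Y})_{\infty}^{\vee}=:\mathcal{D}$; then $B_1,B_2\in\mathcal{X}$ since $\mathcal{X}=\smd(\mathcal{X})$. The plan is a simultaneous cosyzygy telescope that keeps the two summands coupled. I would maintain the invariant $B_1^{(n)}\oplus B_2^{(n)}\in\mathcal{D}$ with $B_i^{(0)}=B_i$. Given it at level $n$, take a first cosyzygy $\suc[B_1^{(n)}\oplus B_2^{(n)}][Y][K]$ of $B_1^{(n)}\oplus B_2^{(n)}\in\mathcal{D}$ (so $Y\in\mathcal{Y}$ and $K\in\mathcal{X}\cap\mathcal{D}$), and push the inclusion out along the two projections to get $\suc[B_1^{(n)}][Y][C_1]$ and $\suc[B_2^{(n)}][Y][C_2]$ together with $\suc[B_2^{(n)}][C_1][K]$ and $\suc[B_1^{(n)}][C_2][K]$. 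The crucial observation is that these assemble into an exact sequence $\suc[B_1^{(n)}\oplus B_2^{(n)}][C_1\oplus C_2][K\oplus K]$; as $\mathcal{D}$ is closed under extensions and $K\oplus K\in\mathcal{D}$, we obtain $C_1\oplus C_2\in\mathcal{D}$, whence $C_1,C_2\in\mathcal{X}$ again by $\mathcal{X}=\smd(\mathcal{X})$. Setting $B_i^{(n+1)}:=C_i$ preserves the invariant, and splicing the sequences $\suc[B_1^{(n)}][Y^{(n)}][B_1^{(n+1)}]$ over all $n$ gives an infinite $\mathcal{Y}_{\mathcal{X}}$-coresolution of $B_1$, so $B_1\in\mathcal{D}$.

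The main obstacle is precisely this coupling step in (b). Unlike in (a), there is no $\Ext^1$-vanishing available, so one cannot lift idempotents to split the coresolution of $B$ and resolve $B_1$ in isolation; the whole argument hinges on the single observation that the direct sum $C_1\oplus C_2$ of the two cosyzygies returns to $\mathcal{D}$ by extension-closure, which is exactly what keeps the telescope running forever. Once this idea is in place, the remaining content of both parts is routine diagram chasing (the Nine Lemma and the pullback/pushout constructions), and I would relegate those verifications to short remarks.
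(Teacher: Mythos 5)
Your proposal is correct and follows essentially the same route as the paper: part (a) is driven by the identical relative Horseshoe step (lifting $A\to Y_0^A$ through $B$ via $\Ext^1_{\mathcal{C}}(C,Y_0^A)=0$, summing with $B\to C\to Y_0^C$, and iterating), and part (b) rests on the same coupling trick of recovering membership of the new cosyzygy in $\mathcal{X}$ from extension-closure of $(\mathcal{X},\mathcal{Y})_{\infty}^{\vee}$ applied to a suitable direct sum, since no splitting of the coresolution is available. The only differences are cosmetic: in (b) the paper propagates the invariant $W_n\oplus W_{n+1}\in(\mathcal{X},\mathcal{Y})_{\infty}^{\vee}$ for consecutive cosyzygies of the one summand while you propagate $C_1\oplus C_2\in(\mathcal{X},\mathcal{Y})_{\infty}^{\vee}$ for the cosyzygies of the two complementary summands, and in (a) your induction on the maximum replaces the paper's direct observation that the tails of the spliced coresolution vanish beyond that maximum.
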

\begin{proof} (a) Let $\eta_{0}:\;\suc[A][B][C][u][v]$ be an exact sequence with $A,C\in(\mathcal{X},\mathcal{Y})_{\infty}^{\vee}.$ Then
 $B\in\mathcal{X}$ since $\mathcal{X}$ is closed under extensions.
On the other hand, by definition, there are exact sequences 
\[
\eta_{A}^{0}:\;\suc[A][Y_{A}][A_{1}][a][\,]\:\mbox{ and }\:\eta_{C}^{0}:\;\suc[C][Y_{C}][C_{1}][c][\,]\mbox{,}
\]
with $Y_{A},Y_{C}\in\mathcal{Y}$ and $A_{1},C_{1}\in(\mathcal{X},\mathcal{Y})_{\infty}^{\vee}$.
Since $C\in(\mathcal{X},\mathcal{Y})_{\infty}^{\vee}\subseteq{}{}^{\bot_{1}}\mathcal{Y}\subseteq{}{}^{\bot_{1}}Y_{A}\mbox{,}$
we have the exact sequence 
\[
0\rightarrow\Homx[\mathcal{C}][C][Y_{A}]\rightarrow\Homx[\mathcal{C}][B][Y_{A}]\rightarrow\Homx[\mathcal{C}][A][Y_{A}]\rightarrow0\mbox{.}
\]
Therefore, there is a morphism $\alpha:B\rightarrow Y_{A}$ such that
$\alpha u=a$. Consider the morphism $b:=\left(\begin{smallmatrix}\alpha\\
cv
\end{smallmatrix}\right):B\rightarrow Y_{A}\oplus Y_{C}\mbox{.}$ Since 
\[
\left(\begin{smallmatrix}1\\
0
\end{smallmatrix}\right)a=\left(\begin{smallmatrix}a\\
0
\end{smallmatrix}\right)=\left(\begin{smallmatrix}\alpha u\\
0
\end{smallmatrix}\right)=\left(\begin{smallmatrix}\alpha\\
cv
\end{smallmatrix}\right)u\quad\mbox{ and }\quad\left(\begin{smallmatrix}0 & 1\end{smallmatrix}\right)\left(\begin{smallmatrix}\alpha\\
cv
\end{smallmatrix}\right)=cv\mbox{,}
\]
by the Snake Lemma, we get the exact sequences 
\[
\eta_{B}^{0}:\;\suc[B][Y_{0}][B_{1}][b][\,]\;\mbox{ and }\;\eta_{1}:\;\suc[A_{1}][B_{1}][C_{1}][\,][\,]\mbox{,}
\]
where $A_{1}$,$C_{1}\in(\mathcal{X},\mathcal{Y})_{\infty}^{\vee}$,
$Y_{0}:=Y_{A}\oplus Y_{C}\in\mathcal{Y}$ and $B_{1}\in\mathcal{X}$.
In order to repeat the argument recursively, assume we have exact
sequences 
\[
\eta_{B}^{k-1}:\;\suc[B_{k-1}][Y_{k-1}][B_{k}][b_{k-1}][\,]\;\mbox{ and }\;\eta_{k}:\;\suc[A_{k}][B_{k}][C_{k}][u_{k}][v_{k}]\mbox{,}
\]
where $A_{k},$ $C_{k}\in(\mathcal{X},\mathcal{Y})_{\infty}^{\vee}$,
$X_{k}\in\mathcal{Y}$ and $B_{k}\in\mathcal{X}$ $\forall\, k\leq n$.
Observe that there are exact sequences
\[
\eta_{A}^{n}:\;\suc[A_{n}][Y_{A,n}][A_{n+1}][a_{n}][\,]\:\mbox{ and }\:\eta_{C}^{n}:\;\suc[C_{n}][Y_{C,n}][C_{n+1}][c_{n}][\,]\mbox{,}
\]
with $Y_{A,n},Y_{C,n}\in\mathcal{Y}$ and $A_{n+1},C_{n+1}\in(\mathcal{X},\mathcal{Y})_{\infty}^{\vee}$.
Since $C_{n}\in(\mathcal{X},\mathcal{Y})_{\infty}^{\vee}\subseteq{}{}^{\bot_{1}}\mathcal{Y}\mbox{,}$
we have the exact sequence 
\[
0\rightarrow\Homx[\mathcal{C}][C_{n}][Y_{A,n}]\rightarrow\Homx[\mathcal{C}][B_{n}][Y_{A,n}]\rightarrow\Homx[\mathcal{C}][A_{n}][Y_{A,n}]\rightarrow0\mbox{.}
\]
Therefore, there is a morphism $\alpha_{n}:B_{n}\rightarrow Y_{A,n}$
such that $\alpha_{n}u_{n}=a_{n}$. Consider the morphism $b_{n}:=\left(\begin{smallmatrix}\alpha_{n}\\
c_{n}v_{n}
\end{smallmatrix}\right):B_{n}\rightarrow Y_{A,n}\oplus Y_{C,n}\mbox{.}$ Since 
\[
\left(\begin{smallmatrix}1\\
0
\end{smallmatrix}\right)a_{n}=\left(\begin{smallmatrix}a_{n}\\
0
\end{smallmatrix}\right)=\left(\begin{smallmatrix}\alpha_{n}u_{n}\\
0
\end{smallmatrix}\right)=\left(\begin{smallmatrix}\alpha_{n}\\
c_{n}v_{n}
\end{smallmatrix}\right)u_{n}\quad\mbox{ and }\quad\left(\begin{smallmatrix}0 & 1\end{smallmatrix}\right)\left(\begin{smallmatrix}\alpha_{n}\\
c_{n}v_{n}
\end{smallmatrix}\right)=c_{n}v_{n}\mbox{,}
\]
 by the Snake Lemma, we get the exact sequences 
\[
\eta_{B}^{n}:\;\suc[B_{n}][Y_{n}][B_{n+1}][b_{n}][\,]\;\mbox{ and }\;\eta_{n+1}:\;\suc[A_{n+1}][B_{n+1}][C_{n+1}][\,][\,]\mbox{,}
\]
where $A_{n+1}$,$C_{n+1}\in(\mathcal{X},\mathcal{Y})_{\infty}^{\vee}$,
$Y_{n}:=Y_{A,n}\oplus Y_{C,n}\in\mathcal{Y}$ and $B_{n+1}\in\mathcal{X}$.
Observe that the family of short exact sequences $\left\{ \eta_{B}^{i}\right\} _{i=0}^{\infty}$ induces a long exact sequence from where we get that
 $B\in(\mathcal{X},\mathcal{Y})_{\infty}^{\vee}$. Furthermore,  if $A,C\in(\mathcal{X},\mathcal{Y})^{\vee}$ then the families of exact 
sequences $\left\{ \eta_{A}^{k}\right\} _{k=0}^{\infty}$ and $\left\{ \eta_{B}^{k}\right\} _{k=0}^{\infty}$
can be chosen in a way that they form a $(\mathcal{X},\mathcal{Y})$-coresolution
of minimal length. Hence, for 
\[
m:=\max\left\{ \coresdimr{\mathcal{Y}}A{\mathcal{X}},\coresdimr{\mathcal{Y}}C{\mathcal{X}}\right\} \mbox{,}
\]
we have $A_{k}=0=C_{k}\:\forall k>m$. Thus, by considering the family of exact
sequences $\left\{ \eta_{k}\right\} _{k=1}^{\infty}$, we have $B_{k}=0\,\forall k>m$.
Therefore $\coresdimr{\mathcal{Y}}B{\mathcal{X}}\leq m$.
\vspace{0.2cm}

(b) Consider a split exact sequence 
\[
\suc[W][V][U][\,][f]\mbox{, with \ensuremath{V}\ensuremath{\in}(\ensuremath{\mathcal{X}},\ensuremath{\mathcal{Y})_{\infty}^{\vee}} . }
\]
Then $U,W\in\mathcal{X}$ since $\mathcal{X}=\smdx[\mathcal{X}].$  Let us show that $W\in(\mathcal{X},\mathcal{Y})_{\infty}^{\vee}.$ Indeed,  since $V\in(\mathcal{X},\mathcal{Y})_{\infty}^{\vee},$
there is an exact sequence 
\[
\suc[V][Y_{0}][V_{1}][g]\mbox{, with \ensuremath{Y_{0}}\ensuremath{\in\mathcal{Y}} and \ensuremath{V_{1}}\ensuremath{\in}(\ensuremath{\mathcal{X}},\ensuremath{\mathcal{Y})_{\infty}^{\vee}} .}
\]
 Now, by considering the push-out of $f$ with $g$, we get the exact
sequences\\
\begin{minipage}[t]{0.55\columnwidth}%
\begin{alignat*}{1}
\eta:\; & \suc[U][W_{1}][V_{1}][\,][\,]\mbox{,}\\
\mu_{0}:\; & \suc[W][Y_{0}][W_{1}][\,][\,]\mbox{.}
\end{alignat*}
Since $U,V_{1}\in\mathcal{X}$, from $\eta$ we get that $W_{1}\in\mathcal{X}$.
Moreover, by making the coproduct of $\eta$ with the short exact sequence
\[
\suc[W][W][0][1][\,]\mbox{,}
\]
we get the short exact sequence 
\[
\suc[V][W\oplus W_{1}][V_{1}]\mbox{.}
\]
\end{minipage}\hfill{}%
\fbox{\begin{minipage}[t]{0.4\columnwidth}%
\[
\begin{tikzpicture}[-,>=to,shorten >=1pt,auto,node distance=1cm,main node/.style=,x=.45cm,y=.45cm]

 \node[main node] (1) at (0,0){$U$};
 \node[main node] (2) at (-2,0){$V$};
 \node[main node] (3) at (-4,0){$W$};
 \node[main node] (4) at (-4,-2){$W$};
 \node[main node] (5) at (-2,-2){$Y_0$};
 \node[main node] (6) at (0,-2){$W_1$};
 \node[main node] (7) at (-2,-4){$V_1$};
 \node[main node] (8) at (0,-4){$V_1$};
 \node[main node] (01) at (0,2){$0$};
 \node[main node] (02) at (-2,2){$0$};
 \node[main node] (03) at (-6,0){$0$};
 \node[main node] (04) at (-6,-2){$0$};
 \node[main node] (05) at (-2,-6){$0$};
 \node[main node] (06) at (0,-6){$0$};
 \node[main node] (07) at (2,0){$0$};
 \node[main node] (08) at (2,-2){$0$};

\draw[->, thin]   (01)  to  node  {$$}    (1);
\draw[->, thin]   (02)  to  node  {$$}    (2);
\draw[->, thin]   (03)  to  node  {$$}    (3);
\draw[->, thin]   (04)  to  node  {$$}    (4);

\draw[->, thin]   (3)  to  node  {$$}    (2);
\draw[->, thin]   (2)  to  node  {$$}    (1);
\draw[->, thin]   (1)  to  node  {$$}    (07);
\draw[->, thin]   (4)  to  node  {$$}    (5);
\draw[->, thin]   (5)  to  node  {$$}    (6);
\draw[->, thin]   (6)  to  node  {$$}    (08);
\draw[->, thin]   (2)  to  node  {$$}    (5);
\draw[->, thin]   (5)  to  node  {$$}    (7);
\draw[->, thin]   (7)  to  node  {$$}    (05);
\draw[->, thin]   (1)  to  node  {$$}    (6);
\draw[->, thin]   (6)  to  node  {$$}    (8);
\draw[->, thin]   (8)  to  node  {$$}    (06);
\draw[-, double]   (3)  to  node  {$$}    (4);
\draw[-, double]   (7)  to  node  {$$}    (8);
   
\end{tikzpicture}
\]%
\end{minipage}}\\
\vspace{0.2cm}

Observe that $V,V_{1}\in(\mathcal{X},\mathcal{Y})_{\infty}^{\vee}$.
Hence $W\oplus W_{1}\in(\mathcal{X},\mathcal{Y})_{\infty}^{\vee}$
since $(\mathcal{X},\mathcal{Y})_{\infty}^{\vee}$ is closed under extensions.
Then, by repeating the above argument, we get a family of
exact sequences 
$
\left\{ \mu_{i}:\;\suc[W_{i}][Y_{i}][W_{i+1}][\,][\,]\right\} _{i=0}^{\infty},
$
where $W_{0}:=W$, $Y_{i}\in\mathcal{Y}$ and $W_{i}\in\mathcal{X}\:\forall\, i\geq0$.
Therefore $W\in(\mathcal{X},\mathcal{Y})_{\infty}^{\vee}$.
\end{proof}

The following theorem is inspired by  \cite[Thm. 3.29]{relgor},
which in turn is a generalization of \cite[Prop. 5.1]{auslandereiten}. The closure properties of the relative classes, involved in Theorem \ref{thm:teo nuevo}, Theorem \ref{thm:el reemplazo } and Corollary \ref{cor:coronuevo}, play an important role in the study and development of $n$-$\X$-tilting theory in \cite{Argudin-Mendoza2}.

\begin{thm} \label{thm:teo nuevo}  Let $(\X,\Y)\subseteq\mathcal{C}^2$ be classes such that
$\Y=\Y^{\oplus_{<\infty}},$ $\mathcal{X}=\smdx[\mathcal{X}]$
is closed under extensions and $\Extx[1][\mathcal{C}][\mathcal{X}][\mathcal{X}\cap\mathcal{Y}]=0.$
Then, the following statements hold true.
\begin{enumerate}
\item[$\mathrm{(a)}$]  $\p[\mathcal{X}][\mathcal{Y}]_{\infty}^{\vee}=\p[\mathcal{X}][\mathcal{X}\cap\mathcal{Y}]_{\infty}^{\vee}$
and it is closed under extensions and direct summands. Moreover $(\mathcal{X},\mathcal{Y})^{\vee}=(\mathcal{X},\mathcal{X}\cap\mathcal{Y})^{\vee}$
and it is closed under extensions.
\item[$\mathrm{(b)}$] $\p[\mathcal{X}][\mathcal{Y}]_{\infty}^{\vee}$ is left thick if $\mathcal{X}$
is left thick.
\end{enumerate}
\end{thm}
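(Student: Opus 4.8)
The plan is to reduce the whole statement to the already-proved Proposition~\ref{prop:auslander reiten generalizado} by first replacing $\mathcal{Y}$ with $\mathcal{X}\cap\mathcal{Y}$, and then to obtain the extra epi-kernel closure in (b) by a direct cosyzygy construction. So the first task is the identity $(\mathcal{X},\mathcal{Y})_{\infty}^{\vee}=(\mathcal{X},\mathcal{X}\cap\mathcal{Y})_{\infty}^{\vee}$, together with its finite analogue $(\mathcal{X},\mathcal{Y})^{\vee}=(\mathcal{X},\mathcal{X}\cap\mathcal{Y})^{\vee}$. The inclusions $\supseteq$ are immediate from $\mathcal{X}\cap\mathcal{Y}\subseteq\mathcal{Y}$. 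For $\subseteq$ I would take $M\in(\mathcal{X},\mathcal{Y})_{\infty}^{\vee}$ with a $\mathcal{Y}_{\mathcal{X}}$-coresolution $0\to M\xrightarrow{f_0}Y_0\xrightarrow{f_1}Y_1\to\cdots$ and split it into short exact sequences $0\to K_i\to Y_i\to K_{i+1}\to0$, where $K_0=M$ and $K_i=\im{f_i}$ for $i\geq1$. By the very definition of a $\mathcal{Y}_{\mathcal{X}}$-coresolution every $K_i$ lies in $\mathcal{X}$, so each $Y_i$ is an extension of two objects of $\mathcal{X}$; since $\mathcal{X}$ is closed under extensions this squeezes $Y_i\in\mathcal{X}$, whence $Y_i\in\mathcal{X}\cap\mathcal{Y}$. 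Thus the same coresolution is already an $(\mathcal{X}\cap\mathcal{Y})_{\mathcal{X}}$-coresolution, giving $M\in(\mathcal{X},\mathcal{X}\cap\mathcal{Y})_{\infty}^{\vee}$. The identical squeezing argument on a finite coresolution (whose top term is in $\mathcal{X}\cap\mathcal{Y}$ by definition) yields the finite version.

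With this identity in hand, I would deduce the remaining assertions of (a) by applying Proposition~\ref{prop:auslander reiten generalizado} to the pair $(\mathcal{X},\mathcal{X}\cap\mathcal{Y})$. Its hypotheses all hold: $\mathcal{X}\cap\mathcal{Y}=(\mathcal{X}\cap\mathcal{Y})^{\oplus_{<\infty}}$ because $\mathcal{Y}$ is closed under finite coproducts by assumption while $\mathcal{X}$ is closed under finite coproducts as a consequence of being closed under extensions and $\smdx[\mathcal{X}]$-closed; $\mathcal{X}$ is closed under extensions by hypothesis; and $(\mathcal{X},\mathcal{X}\cap\mathcal{Y})_{\infty}^{\vee}\subseteq\mathcal{X}\subseteq{}^{\bot_1}(\mathcal{X}\cap\mathcal{Y})$, where the final inclusion is exactly the hypothesis $\Ext^1_{\mathcal{C}}(\mathcal{X},\mathcal{X}\cap\mathcal{Y})=0$. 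Proposition~\ref{prop:auslander reiten generalizado}(a) then gives closure under extensions of both $(\mathcal{X},\mathcal{X}\cap\mathcal{Y})_{\infty}^{\vee}$ and $(\mathcal{X},\mathcal{X}\cap\mathcal{Y})^{\vee}$, and Proposition~\ref{prop:auslander reiten generalizado}(b) (using $\mathcal{X}=\smdx[\mathcal{X}]$) gives closure of the infinite class under direct summands. Transporting these along the identity finishes (a).

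For (b) I would show that, when $\mathcal{X}$ is left thick, the class is closed under epi-kernels (extensions and summands already coming from (a)). Take an exact sequence $0\to A\to B\xrightarrow{v}C\to0$ with $B,C\in(\mathcal{X},\mathcal{Y})_{\infty}^{\vee}$. Closure of $\mathcal{X}$ under epi-kernels gives $A\in\mathcal{X}$. Using the identity, choose the first step $0\to B\to Y_0\to B_1\to0$ of an $(\mathcal{X}\cap\mathcal{Y})_{\mathcal{X}}$-coresolution of $B$, so $Y_0\in\mathcal{X}\cap\mathcal{Y}$ and $B_1\in(\mathcal{X},\mathcal{X}\cap\mathcal{Y})_{\infty}^{\vee}$. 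Composing the monomorphisms $A\hookrightarrow B\hookrightarrow Y_0$ and putting $A_1:=\Cok[A\to Y_0]$, the Noether isomorphism applied to $A\subseteq B\subseteq Y_0$ produces a short exact sequence $0\to C\to A_1\to B_1\to0$. Closure under extensions from (a) gives $A_1\in(\mathcal{X},\mathcal{X}\cap\mathcal{Y})_{\infty}^{\vee}$. Finally the short exact sequence $0\to A\to Y_0\to A_1\to0$, with $Y_0\in\mathcal{X}\cap\mathcal{Y}$, $A\in\mathcal{X}$, and $A_1$ admitting an $(\mathcal{X}\cap\mathcal{Y})_{\mathcal{X}}$-coresolution, splices (the composite $Y_0\twoheadrightarrow A_1\hookrightarrow$ has image $A_1\in\mathcal{X}$) to an $(\mathcal{X}\cap\mathcal{Y})_{\mathcal{X}}$-coresolution of $A$. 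Hence $A\in(\mathcal{X},\mathcal{X}\cap\mathcal{Y})_{\infty}^{\vee}=(\mathcal{X},\mathcal{Y})_{\infty}^{\vee}$, and left thickness follows.

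The step I expect to be the main obstacle is the epi-kernel closure in (b): it is the only place requiring a genuine diagram argument, namely the filtration $A\subseteq B\subseteq Y_0$ and the resulting extension $0\to C\to A_1\to B_1\to0$, and it crucially recycles both the class identity of the first step and the extension-closure of the second. Everything else is essentially bookkeeping and invocation of Proposition~\ref{prop:auslander reiten generalizado}.
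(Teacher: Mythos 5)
Your proposal is correct and follows essentially the same route as the paper: the class identity via the extension-closure "squeezing" of the cosyzygies, reduction of the remaining claims in (a) to Proposition~\ref{prop:auslander reiten generalizado} applied to the pair $(\mathcal{X},\mathcal{X}\cap\mathcal{Y})$, and for (b) the same $3\times 3$ diagram built from the first coresolution step of $B$ (the paper invokes the Snake Lemma where you invoke the Noether isomorphism, which is the same computation). The only cosmetic difference is that you spell out the squeezing argument that the paper dismisses in one line.
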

\begin{proof} (a)  The equality $\p[\mathcal{X}][\mathcal{Y}]_{\infty}^{\vee}=\p[\mathcal{X}][\mathcal{X}\cap\mathcal{Y}]_{\infty}^{\vee}$
follows from the fact that $\mathcal{X}$ is closed under extensions.
 Since $\Extx[1][\mathcal{C}][\mathcal{X}][\mathcal{X}\cap\mathcal{Y}]=0$,
we have that  
$
(\mathcal{X},\mathcal{X}\cap\mathcal{Y})_{\infty}^{\vee}\subseteq\mathcal{X}\subseteq{}^{\bot_{1}}(\mathcal{X}\cap\mathcal{Y})\mbox{.}
$
Therefore, we get (a) by applying Proposition \ref{prop:auslander reiten generalizado}
to the pair $(\mathcal{X},\mathcal{X}\cap\mathcal{Y}).$
\

(b) Let $\mathcal{X}$ be closed under epi-kernels. Then, by (a), it is enough
to show that $\p[\mathcal{X}][\mathcal{X}\cap\mathcal{Y}]_{\infty}^{\vee}$
 is closed under epi-kernels. Consider an exact sequence 
$\suc[A][B][C][a][\,],$  with $B,C\in\p[\mathcal{X}][\mathcal{X}\cap\mathcal{Y}]_{\infty}^{\vee}.$
In particular, there is an exact sequence\\
\begin{minipage}[t]{0.55\columnwidth}%
\[
\suc[B][W_{0}][C_{0}][b][\,]\mbox{,}
\]
with $W_{0}\in\mathcal{X}\cap\mathcal{Y}$ and $C_{0}\in(\mathcal{X},\mathcal{X}\cap\mathcal{Y})_{\infty}^{\vee}$.
By the composition $ba:A\to W_0$ and the Snake Lemma, we get the exact sequences
\begin{alignat*}{1}
\eta:\:\suc[A][W_{0}][C'][\,][\,] & \mbox{,}\\
\eta':\:\suc[C][C'][C_{0}][\,][\,] & \mbox{.}
\end{alignat*}
 Since $C,C_{0}\in(\mathcal{X},\mathcal{X}\cap\mathcal{Y})_{\infty}^{\vee}$, it follows from (a) that
 $C'\in(\mathcal{X},\mathcal{X}\cap\mathcal{Y})_{\infty}^{\vee},$
and since $\mathcal{X}$ is left thick, we have $A\in\mathcal{X}.$ 
Therefore, $\eta$ shows that  $A\in(\mathcal{X},\mathcal{X}\cap\mathcal{Y})_{\infty}^{\vee}$.%
\end{minipage}\hfill{}%
\fbox{\begin{minipage}[t]{0.4\columnwidth}%
\[
\begin{tikzpicture}[-,>=to,shorten >=1pt,auto,node distance=1cm,main node/.style=,x=.45cm,y=.45cm]

 \node[main node] (1) at (0,0){$C$};
 \node[main node] (2) at (-2,0){$B$};
 \node[main node] (3) at (-4,0){$A$};
 \node[main node] (4) at (-4,-2){$A$};
 \node[main node] (5) at (-2,-2){$W_0$};
 \node[main node] (6) at (0,-2){$C'$};
 \node[main node] (7) at (-2,-4){$C_0$};
 \node[main node] (8) at (0,-4){$C_0$};
 \node[main node] (01) at (0,2){$0$};
 \node[main node] (02) at (-2,2){$0$};
 \node[main node] (03) at (-6,0){$0$};
 \node[main node] (04) at (-6,-2){$0$};
 \node[main node] (05) at (-2,-6){$0$};
 \node[main node] (06) at (0,-6){$0$};
 \node[main node] (07) at (2,0){$0$};
 \node[main node] (08) at (2,-2){$0$};

\draw[->, thin]   (01)  to  node  {$$}    (1);
\draw[->, thin]   (02)  to  node  {$$}    (2);
\draw[->, thin]   (03)  to  node  {$$}    (3);
\draw[->, thin]   (04)  to  node  {$$}    (4);

\draw[->, thin]   (3)  to  node  {$$}    (2);
\draw[->, thin]   (2)  to  node  {$$}    (1);
\draw[->, thin]   (1)  to  node  {$$}    (07);
\draw[->, thin]   (4)  to  node  {$$}    (5);
\draw[->, thin]   (5)  to  node  {$$}    (6);
\draw[->, thin]   (6)  to  node  {$$}    (08);
\draw[->, thin]   (2)  to  node  {$$}    (5);
\draw[->, thin]   (5)  to  node  {$$}    (7);
\draw[->, thin]   (7)  to  node  {$$}    (05);
\draw[->, thin]   (1)  to  node  {$$}    (6);
\draw[->, thin]   (6)  to  node  {$$}    (8);
\draw[->, thin]   (8)  to  node  {$$}    (06);
\draw[-, double]   (3)  to  node  {$$}    (4);
\draw[-, double]   (7)  to  node  {$$}    (8);
   
\end{tikzpicture}
\]%
\end{minipage}}\\
\end{proof}

\begin{thm}\label{thm:el reemplazo } Let $\p[\mathcal{Z}][\nu]\subseteq\mathcal{C}^{2}$ be classes such that $\mathcal{Z}=\smdx[\mathcal{Z}]$
is closed under extensions, $\addx[\nu]=\nu$ and $\nu$ is $\mathcal{Z}$-injective.
Then, the following statements hold true.
\begin{enumerate}
\item[$\mathrm{(a)}$] The classes $\p[\mathcal{Z}][\nu]_{\infty}^{\vee}$ and 
$\p[\mathcal{Z}][\nu]^{\vee}$
are closed under extensions and direct summands. Moreover, we have the equalities
  \begin{enumerate}
  \item[$\mathrm{(a1)}$] $\p[\mathcal{Z}][\nu]_{\infty}^{\vee}=\p[\mathcal{Z}][\mathcal{Z}\cap\nu]_{\infty}^{\vee},$
  \vspace{0.2cm}
  \item[$\mathrm{(a2)}$] $\p[\mathcal{Z}][\nu]^{\vee}=\p[\mathcal{Z}][\mathcal{Z}\cap\nu]^{\vee}=\left\{ M\in(\mathcal{Z},\nu)_{\infty}^{\vee}\,|\:\idr[(\mathcal{Z},\nu)_{\infty}^{\vee}][M]<\infty\right\}.$
  \end{enumerate}
  \vspace{0.2cm}
\item[$\mathrm{(b)}$] $\idr[(\mathcal{Z},\nu)_{\infty}^{\vee}][M]=\coresdimr{\nu}M{\mathcal{Z}}\;\forall\, M\in(\mathcal{Z},\nu)^{\vee}$.
\vspace{0.2cm}
\item[$\mathrm{(c)}$] The classes $(\mathcal{Z},\nu)_{\infty}^{\vee}$ and $(\mathcal{Z},\nu)^{\vee}$
are left thick if $\mathcal{Z}$ is left thick.
\end{enumerate}
\end{thm}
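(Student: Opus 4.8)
The plan is to reduce the whole statement to two results already at hand: Theorem~\ref{thm:teo nuevo} applied to the pair $\p[\mathcal{Z}][\nu]$ itself, and Lemma~\ref{lem:gen aus id vs cores con cogenerador} applied to a suitable triple. First I would verify that $\p[\mathcal{Z}][\nu]$ meets the hypotheses of Theorem~\ref{thm:teo nuevo}: the equality $\add(\nu)=\nu$ forces $\nu=\nu^{\oplus_{<\infty}}$ (and $\nu=\smd(\nu)$), the class $\mathcal{Z}=\smd(\mathcal{Z})$ is closed under extensions by hypothesis, and the $\mathcal{Z}$-injectivity of $\nu$, i.e.\ $\idr[\mathcal{Z}][\nu]=0$, gives in particular $\Ext^1_\mathcal{C}(\mathcal{Z},\mathcal{Z}\cap\nu)=0$. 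Theorem~\ref{thm:teo nuevo} then yields (a1), the equality $\p[\mathcal{Z}][\nu]^{\vee}=\p[\mathcal{Z}][\mathcal{Z}\cap\nu]^{\vee}$, the closure of $\p[\mathcal{Z}][\nu]_{\infty}^{\vee}$ under extensions and direct summands, the closure of $\p[\mathcal{Z}][\nu]^{\vee}$ under extensions, and the left thickness of $\p[\mathcal{Z}][\nu]_{\infty}^{\vee}$ whenever $\mathcal{Z}$ is left thick. This disposes of most of (a) and of the $\p[\mathcal{Z}][\nu]_{\infty}^{\vee}$ half of (c).

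Write $\mathcal{V}:=\p[\mathcal{Z}][\nu]_{\infty}^{\vee}$ and $\mu:=\mathcal{Z}\cap\nu$. The core of the argument is to check that $\mu=\smd(\mu)$ is a $\mathcal{V}$-injective relative cogenerator in $\mathcal{V}$, so that Lemma~\ref{lem:gen aus id vs cores con cogenerador} applies with $(\mathcal{W},\mathcal{X})$ replaced by $(\mathcal{V},\mathcal{Z})$ and $\nu$ replaced by $\mu$; note $\mathcal{V}\cap\mathcal{Z}=\mathcal{V}$ since $\mathcal{V}\subseteq\mathcal{Z}$. That $\mu=\smd(\mu)$ is immediate from $\mathcal{Z}=\smd(\mathcal{Z})$ and $\nu=\smd(\nu)$, and $\idr[\mathcal{V}][\mu]=0$ follows from $\mathcal{V}\subseteq\mathcal{Z}$, $\mu\subseteq\nu$ and $\idr[\mathcal{Z}][\nu]=0$. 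The cogenerator property is the key point: for $V\in\mathcal{V}$, the first short exact sequence $\suc[V][Y_{0}][K_{1}]$ of a $\nu_\mathcal{Z}$-coresolution of $V$ has $K_{1}\in\mathcal{Z}$, hence $Y_{0}\in\mathcal{Z}$ because $\mathcal{Z}$ is closed under extensions; thus $Y_{0}\in\mathcal{Z}\cap\nu=\mu$, while $K_{1}$ inherits the truncated coresolution and so lies in $\mathcal{V}$.

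Lemma~\ref{lem:gen aus id vs cores con cogenerador} then provides $\idr[\mathcal{V}][M]=\coresdimr{\mu}M{\mathcal{Z}}$ together with the identity $\mathcal{V}\cap\mu_{\mathcal{Z}}^{\vee}=\{M\in\mathcal{V}\,:\,\idr[\mathcal{V}][M]<\infty\}$. To conclude (a2) and (b) I would record two identifications. First, $\mathcal{V}\cap\mu_{\mathcal{Z}}^{\vee}=\p[\mathcal{Z}][\nu]^{\vee}$: the inclusion $\supseteq$ uses $\p[\mathcal{Z}][\nu]^{\vee}=\p[\mathcal{Z}][\mathcal{Z}\cap\nu]^{\vee}=\mathcal{Z}\cap\mu_{\mathcal{Z}}^{\vee}$ from Theorem~\ref{thm:teo nuevo} together with $\mathcal{Z}\cap\mu_{\mathcal{Z}}^{\vee}\subseteq\mathcal{V}$ (a finite $\mu_\mathcal{Z}$-coresolution is in particular a $\nu_\mathcal{Z}$-coresolution), while $\subseteq$ is clear. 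Second, $\coresdimr{\mu}M{\mathcal{Z}}=\coresdimr{\nu}M{\mathcal{Z}}$ for every $M\in\mathcal{Z}$: here $\leq$ is automatic since $\mu\subseteq\nu$, and for $\geq$ one observes that in any finite $\nu_\mathcal{Z}$-coresolution of an object of $\mathcal{Z}$ every term lies automatically in $\mathcal{Z}\cap\nu=\mu$, because each syzygy belongs to $\mathcal{Z}$ and $\mathcal{Z}$ is closed under extensions. Combining the two identifications yields (b) and the remaining equality of (a2).

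Finally I would record the two outstanding closure statements. Closure of $\p[\mathcal{Z}][\nu]^{\vee}$ under direct summands follows from the characterization $\p[\mathcal{Z}][\nu]^{\vee}=\{M\in\mathcal{V}\,:\,\idr[\mathcal{V}][M]<\infty\}$, the closure of $\mathcal{V}$ under direct summands, and additivity of $\Ext$ in its second variable. For the $\p[\mathcal{Z}][\nu]^{\vee}$ half of (c): if $\mathcal{Z}$ is left thick then $\mathcal{V}$ is left thick, so Lemma~\ref{lem:gen aus id vs cores con cogenerador}(c) makes $\mu_{\mathcal{Z}}^{\vee}$ left thick and yields $\mu_{\mathcal{Z}}^{\vee}\subseteq\mathcal{V}$, whence $\p[\mathcal{Z}][\nu]^{\vee}=\mathcal{V}\cap\mu_{\mathcal{Z}}^{\vee}=\mu_{\mathcal{Z}}^{\vee}$ is left thick. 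The main obstacle---the only genuinely non-formal step---is the verification that $\mu$ is a relative cogenerator in $\mathcal{V}$ and the companion fact that finite $\nu_\mathcal{Z}$-coresolutions have all terms in $\mu$; both rest squarely on $\mathcal{Z}$ being closed under extensions.
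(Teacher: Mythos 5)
Your proposal is correct and follows essentially the same route as the paper: Theorem \ref{thm:teo nuevo} for (a1), the first equality of (a2) and the closure/thickness statements, then Lemma \ref{lem:gen aus id vs cores con cogenerador} applied with $\mathcal{W}=(\mathcal{Z},\nu)_{\infty}^{\vee}$ and $\mathcal{X}=\mathcal{Z}$ for the rest. Your only deviation is to use $\mu=\mathcal{Z}\cap\nu$ as the relative cogenerator instead of $\nu$ itself (which is in fact slightly more careful, since the definition of relative cogenerator requires containment in $\mathcal{W}$); the extra identifications this forces are handled correctly, modulo a harmless swap of the labels $\leq$ and $\geq$ in the comparison of $\coresdimr{\mu}M{\mathcal{Z}}$ with $\coresdimr{\nu}M{\mathcal{Z}}$.
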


\begin{proof} (a) \& (b) By Theorem \ref{thm:teo nuevo} we get: the first two equalities
in (a), $(\mathcal{Z},\nu)_{\infty}^{\vee}$ is closed under extensions
and direct summands, and $(\mathcal{Z},\nu)^{\vee}$ is closed
under extensions.
\

Let $\mathcal{W}=(\mathcal{Z},\nu)_{\infty}^{\vee}.$ Since
$\nu$ is $\mathcal{Z}$-injective, it follows that $\nu$ is a $\mathcal{W}$-injective
relative cogenerator in $\mathcal{W}$. Hence, by Lemma \ref{lem:gen aus id vs cores con cogenerador} (a)
\[
\p[\mathcal{Z}][\nu]^{\vee}=\mathcal{W}\cap\p[\mathcal{Z}][\nu]^{\vee}=\mathcal{W}\cap\mathcal{Z}\cap\nu_{\mathcal{Z}}^{\vee}=\left\{ M\in\mathcal{W}\cap\mathcal{Z}\,|\:\idr[\mathcal{W}\cap\mathcal{Z}][M]<\infty\right\} \mbox{.}
\]
Note that $\mathcal{W}\cap\mathcal{Z}=\mathcal{W}$. Consequently,
\begin{equation*}
(*)\quad{\small(\mathcal{Z},\nu)^{\vee}=\left\{ M\in(\mathcal{Z},\nu)_{\infty}^{\vee}\,|\:\idr[(\mathcal{Z},\nu)_{\infty}^{\vee}][M]<\infty\right\} =\left\{ M\in\mathcal{W}\,|\:\idr[\mathcal{W}][M]<\infty\right\}} \mbox{.}
\end{equation*}
 Furthermore, we have (b) from Lemma \ref{lem:gen aus id vs cores con cogenerador}(b).
 \
 
Let us show that $(\mathcal{Z},\nu)^{\vee}$ is closed under direct summands. Indeed, let $M\in(\mathcal{Z},\nu)^{\vee}$ and $M=M_{1}\oplus M_{2}$. Since
$(\mathcal{Z},\nu)^{\vee}\subseteq(\mathcal{Z},\nu)_{\infty}^{\vee}$
and $(\mathcal{Z},\nu)_{\infty}^{\vee}$ is closed under direct summands,
we have $M_{1},M_{2}\in(\mathcal{Z},\nu)_{\infty}^{\vee}$. Now, by using $(*)$ and that
$$\max\left\{ \idr[\mathcal{W}][M_{1}],\idr[\mathcal{W}][M_{2}]\right\} =\idr[\mathcal{W}][M]<\infty,$$
it follows that $M_{1},M_{2}\in(\mathcal{Z},\nu)^{\vee}$.
\

(c) Let $\mathcal{Z}$ be left thick. By Theorem \ref{thm:teo nuevo} (b),
we have that $(\mathcal{Z},\nu)_{\infty}^{\vee}$ is left thick. Now,
from (a) and the proof of Theorem \ref{thm:teo nuevo} (b), it follows that 
$(\mathcal{Z},\nu)^{\vee}$
is left thick.
\end{proof}

\begin{cor}\label{cor:coronuevo} Let 
$\mathcal{X}\subseteq\mathcal{C}$ be left thick and $\mathcal{T}=\addx[\mathcal{T}]\subseteq\mathcal{C}$
be such that $\mathcal{T}\subseteq\mathcal{T}^{\bot}\cap\mathcal{X}.$
Then, the following statements hold true.
\begin{enumerate}
\item[$\mathrm{(a)}$] The class $\mathcal{Q}:=({}^{\bot}\mathcal{T}\cap\mathcal{X},\mathcal{T})_{\infty}^{\vee}$
is left thick.

\item[$\mathrm{(b)}$] $\mathcal{T}_{\mathcal{X}}^{\vee}=\mathcal{T}^{\vee}=\{M\in\mathcal{Q}\;|\;\idr[\mathcal{Q}][M]<\infty\}$
and it is left thick. 
\end{enumerate}
\end{cor}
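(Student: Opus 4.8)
The plan is to apply Theorem~\ref{thm:el reemplazo } to the pair $(\mathcal{Z},\nu)$ with $\mathcal{Z}:={}^{\bot}\mathcal{T}\cap\mathcal{X}$ and $\nu:=\mathcal{T}$, so that $\mathcal{Q}=(\mathcal{Z},\nu)_{\infty}^{\vee}$. First I would verify the hypotheses of that theorem. For each $T\in\mathcal{T}$, the long exact sequence induced by the functor $\Homx[\mathcal{C}][-][T]$ shows that ${}^{\bot}\mathcal{T}$ is closed under extensions, direct summands and epi-kernels; since $\mathcal{X}$ is left thick, it follows that $\mathcal{Z}={}^{\bot}\mathcal{T}\cap\mathcal{X}$ is left thick, and in particular $\mathcal{Z}=\smd(\mathcal{Z})$ is closed under extensions. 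Moreover $\add(\mathcal{T})=\mathcal{T}$ by hypothesis, and $\mathcal{Z}\subseteq{}^{\bot}\mathcal{T}$ gives $\idr[\mathcal{Z}][\mathcal{T}]=0$, so that $\nu=\mathcal{T}$ is $\mathcal{Z}$-injective. Thus all the hypotheses of Theorem~\ref{thm:el reemplazo } are met, and part (a) is then immediate from Theorem~\ref{thm:el reemplazo }(c), as $\mathcal{Z}$ is left thick.

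For (b), the heart of the matter is to identify the three a priori different coresolution classes $\mathcal{T}^{\vee}$, $\mathcal{T}_{\mathcal{X}}^{\vee}$ and $(\mathcal{Z},\mathcal{T})^{\vee}$. I would first observe that $\mathcal{T}\subseteq\mathcal{T}^{\bot}\cap\mathcal{X}$ yields $\mathcal{T}\subseteq\mathcal{X}$ together with the self-orthogonality $\Extx[j][][\mathcal{T}][\mathcal{T}]=0$ for all $j\geq1$; being symmetric in its two arguments, this also gives $\mathcal{T}\subseteq{}^{\bot}\mathcal{T}$, whence $\mathcal{T}\subseteq\mathcal{Z}$ and $\mathcal{Z}\cap\mathcal{T}=\mathcal{T}$. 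The crucial claim is that for any finite $\mathcal{T}$-coresolution $0\to M\to T_{0}\to\cdots\to T_{n}\to0$ with all $T_{i}\in\mathcal{T}$, every cosyzygy $K_{i}$ lies in $\mathcal{Z}$. Indeed, the $K_{i}$ lie in $\mathcal{X}$ by downward iteration of the epi-kernel closure of the left thick class $\mathcal{X}$ along the sequences $\suc[K_{i}][T_{i}][K_{i+1}]$ (using $T_{i}\in\mathcal{T}\subseteq\mathcal{X}$); and they lie in ${}^{\bot}\mathcal{T}$ by downward induction on the same sequences, since $T_{i},K_{i+1}\in{}^{\bot}\mathcal{T}$ force $\Extx[j][][K_{i}][T]=0$ for all $j\geq1$ and $T\in\mathcal{T}$, the base case being $K_{n}=T_{n}\in\mathcal{T}\subseteq{}^{\bot}\mathcal{T}$.

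Granting this claim, any $\mathcal{T}$-coresolution is automatically a $\mathcal{T}_{\mathcal{Z}}$-coresolution (hence a $\mathcal{T}_{\mathcal{X}}$-coresolution) with $M\in\mathcal{Z}$, while conversely any $\mathcal{T}_{\mathcal{X}}$- or $\mathcal{T}_{\mathcal{Z}}$-coresolution is in particular a $\mathcal{T}$-coresolution; this produces the equalities
\[
\mathcal{T}^{\vee}=\mathcal{T}_{\mathcal{X}}^{\vee}=\mathcal{Z}\cap\mathcal{T}_{\mathcal{Z}}^{\vee}=(\mathcal{Z},\mathcal{T})^{\vee}.
\]
Theorem~\ref{thm:el reemplazo }(a2) then identifies $(\mathcal{Z},\mathcal{T})^{\vee}$ with $\{M\in\mathcal{Q}\,|\:\idr[\mathcal{Q}][M]<\infty\}$, and Theorem~\ref{thm:el reemplazo }(c) shows this class is left thick, completing (b). The hard part will be the cosyzygy computation: one must check, with the correct indexing, that the cosyzygies of an \emph{unrestricted} $\mathcal{T}$-coresolution automatically land in both $\mathcal{X}$ and ${}^{\bot}\mathcal{T}$, since this is exactly what collapses the three relative coresolution classes into one. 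Everything else is bookkeeping layered on top of Theorem~\ref{thm:el reemplazo }.
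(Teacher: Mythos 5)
Your proof is correct, and for part (a) it is the same argument as the paper's: verify that $\mathcal{Z}:={}^{\bot}\mathcal{T}\cap\mathcal{X}$ is left thick, that $\mathcal{T}=\mathrm{add}(\mathcal{T})$ is $\mathcal{Z}$-injective, and invoke Theorem \ref{thm:el reemplazo }(c). For part (b) you take a mildly different route. The paper applies Lemma \ref{lem:(5.3homologia_relativa)} and Lemma \ref{lem:gen aus id vs cores con cogenerador} directly to the pair $(\mathcal{Q},\mathcal{T})$ (using that $\mathcal{T}$ is a $\mathcal{Q}$-injective relative cogenerator in the left thick class $\mathcal{Q}$ and that $\mathcal{Q}\cap\mathcal{X}=\mathcal{Q}$), which identifies each of $\mathcal{T}^{\vee}$ and $\mathcal{T}^{\vee}_{\mathcal{X}}$ separately with $\{M\in\mathcal{Q}\;|\;\mathrm{id}_{\mathcal{Q}}(M)<\infty\}$; there the inclusion $\mathcal{T}^{\vee}\subseteq\mathcal{Q}$ is delegated to Lemma \ref{lem:lemitia auslander reiten}. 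You instead carry out the underlying cosyzygy computation explicitly — downward induction along $0\to K_{i}\to T_{i}\to K_{i+1}\to 0$ showing all cosyzygies land in ${}^{\bot}\mathcal{T}\cap\mathcal{X}$ — to collapse $\mathcal{T}^{\vee}=\mathcal{T}^{\vee}_{\mathcal{X}}=(\mathcal{Z},\mathcal{T})^{\vee}$ in one stroke, and then quote Theorem \ref{thm:el reemplazo }(a2) and (c). The two arguments rest on the same epi-kernel induction; yours has the merit of making explicit the identification of the three coresolution classes, which the paper's proof leaves as ``it can be seen,'' at the cost of redoing by hand what the cited lemmas already package. Both are complete and correct.
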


\begin{proof}
Note that $\mathcal{Q}\subseteq{}^{\bot}\mathcal{T}\cap\mathcal{X},$
 $\mathcal{Q}\cap\mathcal{X}=\mathcal{Q}$ and $\mathcal{T}\subseteq{}^{\bot}\mathcal{T}\cap\mathcal{Q}.$  
Using that $\Extx[i][][^{\bot}\mathcal{T}][\mathcal{T}]=0$
$\forall i>0$, it follows that $\mathcal{T}$ is 
$({}^{\bot}\mathcal{T}\cap\mathcal{X})$-injective
and thus $\T$ is $\mathcal{Q}$-injective since $\mathcal{Q}\subseteq{}^{\bot}\mathcal{T}\cap\mathcal{X}.$ 
Lastly, observe that ${}^{\bot}\mathcal{T}\cap\mathcal{X}$ is left thick since $\X$ and ${}^{\bot}\mathcal{T}$ are left thick.
Now, by applying Theorem \ref{thm:el reemplazo }
to the pair $({}^{\bot}\mathcal{T}\cap\mathcal{X},\mathcal{T}),$
we get that $\mathcal{Q}$ is left thick; proving (a). Finally, by applying
Lemma \ref{lem:(5.3homologia_relativa)} and Lemma \ref{lem:gen aus id vs cores con cogenerador}
to the pair $(\mathcal{Q},\mathcal{T})$ and the class $\mathcal{X},$ it can be seen that
(b) holds true.
\end{proof}

We can now return to the study of the $\mathcal{X}$-complete pairs in an abelian category $\C.$
We will be focusing on deepening our understanding of the relations between
the different induced relative dimensions. Furthermore, we will see that, under
certain hypotheses, for an $\mathcal{X}$-complete pair $\p[\mathcal{A}][\mathcal{B}]$ in $\C,$
the class $\mathcal{A}\cap\mathcal{B}\cap\mathcal{X}$ is a relative
generator in $\mathcal{A}\cap\mathcal{X}$ and a relative cogenerator
in $\mathcal{B}\cap\mathcal{X}.$ Let us start by recalling the following
result proved by M. Auslander and R. O. Buchweitz in \cite{Auslander-Buchweitz}.

\begin{pro}\cite[Prop. 2.1]{Auslander-Buchweitz} \label{thm:(5.6homologia_relativa)}
Let $\p[\mathcal{X}][\omega]\subseteq\mathcal{C}^{2}$ be classes which are closed under direct summands, $\mathcal{X}$ be closed under extensions and let $\omega$ be an $\mathcal{X}$-injective
relative cogenerator in $\mathcal{X}.$ Then 
\begin{center}
$\pdr[\omega^{\wedge}][C]=\pdr[\omega][C]=\resdimx{\mathcal{X}}C\quad\forall\, C\in\mathcal{X}^{\wedge}.$
\end{center}
\end{pro}

 In the case of an $\mathcal{X}$-complete
and $\mathcal{X}$-hereditary pair, the above result can be strengthened, see Proposition \ref{pro:(5.7homologia_relativa)}. As we will see in \cite{Argudin-Mendoza2}, for an $n$-$\X$-tilting subcategory of $\C,$ the pair $({}^\perp(\T^\perp), \T^\perp)$ is always $\X$-complete and $\X$-hereditary. Therefore, Proposition \ref{pro:(5.7homologia_relativa)} and Theorem \ref{thm:(5.8homologia_relativa)}  will play an important role in the development of the $n$-$\X$-tilting theory  in \cite{Argudin-Mendoza2}.

\begin{pro}\label{pro:(5.7homologia_relativa)} For a class $\mathcal{X}\subseteq\mathcal{C},$ an $\mathcal{X}$-complete
and $\mathcal{X}$-hereditary pair $\p$ in $\C$ such that $\mathcal{A},$ $\mathcal{X}$
and $\mathcal{B}$ are closed under extensions and direct summands,  and $\omega:=\mathcal{A}\cap\mathcal{B}\cap\mathcal{X},$
the following statements hold true.
\begin{enumerate}
\item[$\mathrm{(a)}$] The class $\omega$ is an $\mathcal{A}\cap\mathcal{X}$-injective relative cogenerator
in $\mathcal{A}\cap\mathcal{X}.$

\item[$\mathrm{(b)}$] The class $\omega$ is an $\B\cap\X$-projective relative generator
in $\B\cap\X.$

\item[$\mathrm{(c)}$] For any $M\in\left(\mathcal{A},\mathcal{X}\right)^{\wedge},$ we have the equalities
\begin{center}
$\pdr[\mathcal{B}\cap\mathcal{X}][M]=\pdr[\omega][M]=\pdr[\omega^{\wedge}][M]=\resdimr{\mathcal{A}}M{\mathcal{X}}=\resdimx{\mathcal{X}\cap\mathcal{A}}M.$
\end{center}

\item[$\mathrm{(d)}$] $\pdr[\mathcal{B}\cap\mathcal{X}][M]=\resdimr{\omega}M{\mathcal{X}}\;$
$\forall\, M\in(\omega,\mathcal{X})^{\wedge}.$
\vspace{0.2cm}

\item[$\mathrm{(e)}$] For any $M\in\left(\X,\B\right)^{\vee},$ we have the equalities
\begin{center}
$\idr[\A\cap\mathcal{X}][M]=\idr[\omega][M]=\idr[\omega^{\vee}][M]=\coresdimr{\B}M{\mathcal{X}}=\coresdimx{\mathcal{X}\cap\B}M.$
\end{center}

\item[$\mathrm{(f)}$] $\idr[\A\cap\mathcal{X}][M]=\coresdimr{\omega}M{\mathcal{X}}\;$
$\forall\, M\in(\X,\omega)^{\vee}.$
\end{enumerate}
\end{pro}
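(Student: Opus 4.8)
The plan is to establish (a)--(d) directly and then deduce (e) and (f) by the paper's duality convention, which applies here because $\omega=\A\cap\B\cap\X$ is self-dual and the hypotheses are symmetric in $\A$ and $\B$ (the pair being $\X$-complete means both left and right $\X$-complete). Throughout I will use that $\omega=\add(\omega)$: it is closed under direct summands since $\A,\B,\X$ are, and, being extension-closed and containing $0$, it is closed under finite coproducts; this is what lets me feed $\omega$ into the Auslander--Buchweitz lemmas.

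For (a): the inclusion $\omega\subseteq\A\cap\X$ is immediate, and since $\omega\subseteq\B\cap\X$ the $\X$-hereditary hypothesis $\id_{\A\cap\X}(\B\cap\X)=0$ gives $\id_{\A\cap\X}(\omega)=0$, so $\omega$ is $\A\cap\X$-injective. For the relative cogenerator property I apply right $\X$-completeness to any $X\in\A\cap\X$, obtaining $0\to X\to B\to A\to 0$ with $B\in\B\cap\X$ and $A\in\A\cap\X$; since $X,A\in\A$ and $\A$ is closed under extensions, $B\in\A$, hence $B\in\omega$, and this is the required sequence. Part (b) is the formal dual, using left $\X$-completeness and closure of $\B$ under extensions to place the $\A$-term into $\omega$; its $\B\cap\X$-projectivity again comes from $\X$-heredity via $\omega\subseteq\A\cap\X$.

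For (c) the strategy is to split the five quantities into two overlapping groups, both anchored at $\resdim_{\X\cap\A}(M)$. First, applying the Auslander--Buchweitz Theorem~\ref{thm:(5.6homologia_relativa)} to the pair $(\A\cap\X,\omega)$ -- legitimate by (a), since $\A\cap\X$ and $\omega$ are summand-closed and $\A\cap\X$ is extension-closed -- gives $\pd_{\omega^{\wedge}}(M)=\pd_{\omega}(M)=\resdim_{\X\cap\A}(M)$; here I use that $(\A,\X)^{\wedge}\subseteq(\A\cap\X)^{\wedge}$, which holds because $\X$ is extension-closed forces every term of an $\A_{\X}$-resolution of $M\in\X$ to lie in $\X$. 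Second, I invoke the dual of Corollary~\ref{cor:(4.9homologia_relativa)}(b) for the (left $\X$-complete, $\X$-hereditary) pair with $\A=\smd(\A)$; its extra hypothesis ${}^{\perp}(\B\cap\X)\cap\B\cap\X\subseteq\A\cap\X$ holds because the dual of Lemma~\ref{lem:(5.1homologia_relativa)}(b), applied to $(\B\cap\X,\omega)$ through (b), identifies this intersection with $\omega$. This yields $\resdim^{\X}_{\A}(M)=\pd_{\B\cap\X}(M)=\resdim_{\X\cap\A}(M)$, and gluing the two groups along $\resdim_{\X\cap\A}(M)$ gives all five equalities.

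For (d) the decisive reduction is the identity $(\omega,\X)^{\wedge}=\B\cap\X\cap\omega^{\wedge}_{\X}$. Its nontrivial inclusion $(\omega,\X)^{\wedge}\subseteq\B$ I prove by induction on resolution length once I show that $\B$ is closed under mono-cokernels with cokernel in $\X$: given $0\to B_1\to B_0\to M\to 0$ with $B_0,B_1\in\B\cap\X$ and $M\in\X$, right $\X$-completeness produces $0\to M\to B''\to A''\to 0$, and the $\X$-hereditary condition forces $\Ext^1(A'',M)=0$ through the first sequence, so the second splits and $M\in\B$. Granting this identity, statement (d) is exactly the dual of the generalized Auslander--Buchweitz Lemma~\ref{lem:gen aus id vs cores con cogenerador}(b) applied to $(\B,\omega)$ and $\X$, whose hypothesis that $\omega=\smd(\omega)$ is a $\B\cap\X$-projective relative generator in $\B\cap\X$ is precisely (b). The main obstacle throughout is not any single hard estimate but the careful bookkeeping of syzygies: one must keep them inside the relevant relative class so that the orthogonality facts $(\A\cap\X)\cap(\A\cap\X)^{\perp}=\omega=(\B\cap\X)\cap{}^{\perp}(\B\cap\X)$ and the domain identification in (d) are available. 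Finally, (e) and (f) are the duals of (c) and (d) and follow by the duality convention.
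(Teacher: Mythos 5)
Your proposal is correct, and parts (a), (b) together with the duality reduction for (e) and (f) coincide with the paper's own argument; the interesting divergence is in (c) and (d). For (c) the paper also combines Theorem \ref{thm:(5.6homologia_relativa)} applied to $(\mathcal{A}\cap\mathcal{X},\omega)$ with the dual of Corollary \ref{cor:(4.9homologia_relativa)}, but it only uses part (a) of that corollary, which gives $\pd_{\mathcal{B}\cap\mathcal{X}}(M)=\resdim_{\mathcal{A}}^{\mathcal{X}}(M)$, and then closes the chain by hand: it writes $\pd_{\mathcal{B}\cap\mathcal{X}}(M)=m+t$ with $m=\pd_{\omega}(M)$, resolves an arbitrary $B\in\mathcal{B}\cap\mathcal{X}$ by $\omega$ using (b), and kills $t$ with the Shifting Lemma. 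You instead invoke the dual of Corollary \ref{cor:(4.9homologia_relativa)}(b), whose extra hypothesis ${}^{\bot}\left(\mathcal{B}\cap\mathcal{X}\right)\cap\mathcal{B}\cap\mathcal{X}\subseteq\mathcal{A}\cap\mathcal{X}$ you correctly discharge via the dual of Lemma \ref{lem:(5.1homologia_relativa)}(b) applied through (b); this yields $\pd_{\mathcal{B}\cap\mathcal{X}}(M)=\resdim_{\mathcal{A}}^{\mathcal{X}}(M)=\resdim_{\mathcal{X}\cap\mathcal{A}}(M)$ in one stroke and spares the shifting-lemma computation, at the cost of needing the stronger part (b) of the corollary. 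For (d) the paper runs a direct induction on $\resdim_{\omega}^{\mathcal{X}}(M)$, using (c) to rule out the degenerate splitting case; you instead prove $(\omega,\mathcal{X})^{\wedge}=\mathcal{B}\cap\mathcal{X}\cap\omega_{\mathcal{X}}^{\wedge}$ (your splitting argument showing $\mathcal{B}$ is closed under mono-cokernels along $\mathcal{X}$ is sound, and the downward induction on the resolution is routine) and then quote the dual of Lemma \ref{lem:gen aus id vs cores con cogenerador}(b) for $(\mathcal{B},\omega)$, whose hypotheses are exactly statement (b). Both substitutions lean only on general lemmas established earlier in the paper, so there is no circularity; what your route buys is a cleaner, more modular deduction that also makes the inclusion $(\omega,\mathcal{X})^{\wedge}\subseteq\mathcal{B}\cap\mathcal{X}$ explicit (a fact the paper only records later, in Theorem \ref{thm:(5.8homologia_relativa)}), while the paper's route is more self-contained in (c) and avoids verifying the orthogonality hypothesis of Corollary \ref{cor:(4.9homologia_relativa)}(b).
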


\begin{proof} Note that (b) is the dual of (a), (e) is the dual of (c) and (f) is the dual of (d).
\

(a) We have that $\idr[\mathcal{A}\cap\mathcal{X}][\mathcal{B}\cap\mathcal{X}]=0$ since $\p$ is $\mathcal{X}$-hereditary.
In particular $\idr[\mathcal{A}\cap\mathcal{X}][\omega]\leq\idr[\mathcal{A}\cap\mathcal{X}][\mathcal{B}\cap\mathcal{X}]=0$ and thus 
 $\omega$ is $\mathcal{A}\cap\mathcal{X}$-injective.
 \
 
Let us show that $\omega$ is a relative cogenerator in $\mathcal{A}\cap\mathcal{X}.$ Indeed, since $\p$ is right $\mathcal{X}$-complete, for every $X\in\mathcal{A}\cap\mathcal{X},$
there is an exact sequence 
\[
\suc[X][W][X']\mbox{, with }W\in\mathcal{B}\cap\mathcal{X}\mbox{ and }X'\in\mathcal{A}\cap\mathcal{X}.
\]
Furthermore, $W\in\mathcal{A}\cap\mathcal{X}$ since $\mathcal{A}\cap\mathcal{X}$ is closed under extensions. Hence $W$ belongs to $\mathcal{A}\cap\mathcal{B}\cap\mathcal{X}=\omega,$ proving (a). 
\

(c) Observe, firstly, that $\left(\mathcal{A},\mathcal{X}\right)^{\wedge}\subseteq\left(\mathcal{A}\cap\mathcal{X}\right)^{\wedge}$ since $\mathcal{X}$ is
 closed under extensions.
\

Let $M\in\left(\mathcal{A},\mathcal{X}\right)^{\wedge}.$ Then by (a) and Proposition \ref{thm:(5.6homologia_relativa)},  $\pdr[\omega^{\wedge}][M]=\pdr[\omega][M]=\resdimx{\mathcal{A}\cap\mathcal{X}}M\mbox{.}$
By the dual of Corollary \ref{cor:(4.9homologia_relativa)} (a), 
$\pdr[\mathcal{B}\cap\mathcal{X}][M]=\resdimr{\mathcal{A}}M{\mathcal{X}}\mbox{.}$
Moreover $\resdimx{\mathcal{A}\cap\mathcal{X}}M\leq\resdimr{\mathcal{A}}M{\mathcal{X}}$ because $\mathcal{X}$ is closed under extensions.
Since we have that
\[
\pdr[\omega][M]=\pdr[\omega^{\wedge}][M]=\resdimx{\mathcal{A}\cap\mathcal{X}}M\leq\resdimr{\mathcal{A}}M{\mathcal{X}}=\pdr[\mathcal{B}\cap\mathcal{X}][M]\mbox{,}
\]
it is enough to show that $\pdr[\mathcal{B}\cap\mathcal{X}][M]\leq\pdr[\omega][M].$ In order to prove that, we can assume that $\pdr[\omega][M]=m<\infty$.
Then
\[
\pdr[\omega][M]\leq\pdr[\mathcal{B}\cap\mathcal{X}][M]=\resdimr{\mathcal{A}}M{\mathcal{X}}<\infty\mbox{.}
\]
and there is some $t\geq0$ such that $\pdr[\mathcal{B\cap\mathcal{X}}][M]=m+t$.
Let $B\in\mathcal{B}\cap\mathcal{X}$. By (b), we know that $\omega$
is a $\mathcal{B}\cap\mathcal{X}$-projective relative generator
in $\mathcal{B}\cap\mathcal{X}$. Hence, there is an exact sequence
$
0\rightarrow B_{t}\rightarrow A_{t-1}\rightarrow\cdots\rightarrow A_{0}\rightarrow B\rightarrow0\mbox{,}
$
with $B_{t}\in\mathcal{B}\cap\mathcal{X}$ and $A_{i}\in\omega\:\forall i\in[0,t-1].$
Since 
$
A_{i}\in\omega\subseteq M^{\bot_{>m}}\;\forall\, i\in[0,t-1]\mbox{,}
$
 by the Shifting Lemma we have 
$
\Extx[k][][M][B]\cong\Extx[k+t][][M][B_{t}]\,\forall k>m\mbox{.}
$
Now, by using that $\pdr[\mathcal{B\cap\mathcal{X}}][M]=m+t$, it follows
that 
\[
\Extx[k][][M][B]\cong\Extx[k+t][][M][B_{t}]=0\quad\forall\, k>m\mbox{.}
\]
 Therefore $\pdr[\mathcal{B}\cap\mathcal{X}][M]\leq m=\pdr[\omega][M]$.
\

(d) Let $M\in(\omega,\mathcal{X})^{\wedge}$. By (b), we have that
$\pdr[\mathcal{B}\cap\mathcal{X}][\omega]=0$. Then, by the dual 
of Theorem \ref{thm:(4.2homologia_relativa)}, we get that 
\[
\pdr[\mathcal{B}\cap\mathcal{X}][M]\leq\pdr[\mathcal{B}\cap\mathcal{X}][\omega]+\resdimr{\omega}M{\mathcal{X}}=\resdimr{\omega}M{\mathcal{X}}\mbox{.}
\]
We shall prove, by induction on $n=\resdimr{\omega}M{\mathcal{X}}$,
that $\pdr[\mathcal{B}\cap\mathcal{X}][M]=\resdimr{\omega}M{\mathcal{X}}$.
\\
If $n=0$ then $M\in\omega$ and thus $\pdr[\mathcal{B}\cap\mathcal{X}][M]=0.$
\

Let $n>0$. By inductive hypothesis $\pdr[\mathcal{B}\cap\mathcal{X}][N]=\resdimr{\omega}N{\mathcal{X}},$
for every $N\in\mathcal{X}$ with $\resdimr{\omega}N{\mathcal{X}}<n$.
Since $n=\resdimr{\omega}M{\mathcal{X}}$, we have an exact sequence
\[
\eta:\quad\suc[K][W_{0}][M]\mbox{, with }W_{0}\in\omega,\,K\in(\omega,\mathcal{X})^{\wedge}\mbox{,}
\]
and $\resdimr{\omega}K{\mathcal{X}}=n-1.$ Thus $\pdr[\mathcal{B}\cap\mathcal{X}][K]=n-1$.
Besides, by Lemma \ref{lem:(4.7/51homologia_relativa)}, 
\begin{equation}
n-1=\pdr[\mathcal{B}\cap\mathcal{X}][K]\leq\max\{\pdr[\mathcal{B}\cap\mathcal{X}][M]-1,\pdr[\mathcal{B}\cap\mathcal{X}][W_{0}]\}\mbox{.}\label{eq:maxmax}
\end{equation}
We assert that $\pdr[\mathcal{B}\cap\mathcal{X}][M]>0.$ Suppose that
 $\pdr[\mathcal{B}\cap\mathcal{X}][M]=0$. Since $\omega\subseteq\mathcal{A}$,
we have $(\omega,\mathcal{X})^{\wedge}\subseteq(\mathcal{A},\mathcal{X})^{\wedge}$.
Hence, by (c), $\pdr[\mathcal{B}\cap\mathcal{X}][M]=\pdr[\omega^{\wedge}][M]$.
Thus $\eta$ splits since $K\in\omega^{\wedge}.$ Consequently
$M\in\omega$, contradicting that $\resdimr{\omega}M{\mathcal{X}}>0;$ and the assertion follows. Then, by (\ref{eq:maxmax}), we get that $\pdr[\mathcal{B}\cap\mathcal{X}][M]=n$ since 
 $\pdr[\mathcal{B}\cap\mathcal{X}][M]>0$ and $\pdr[\mathcal{B}\cap\mathcal{X}][W_{0}]=0.$
\end{proof}

\begin{thm}\label{thm:(5.8homologia_relativa)} For a class $\mathcal{X}\subseteq\mathcal{C},$ an $\mathcal{X}$-complete
and $\mathcal{X}$-hereditary pair $\p$ in $\C$ such that $\mathcal{A},$ $\mathcal{X}$
and $\mathcal{B}$ are closed under extensions and direct summands,  and $\omega:=\mathcal{A}\cap\mathcal{B}\cap\mathcal{X},$
the following statements hold true.
\begin{itemize}
\item[$\mathrm{(a)}$] $\omega=\left(\mathcal{A}\cap\mathcal{X}\right)^{\bot}\cap\mathcal{A}\cap\mathcal{X}=\mathcal{A}\cap\mathcal{X}\cap\omega^{\wedge}=(\omega,\mathcal{A}\cap\mathcal{X})^{\wedge}.$ Furthermore, 
  \begin{itemize}
  \item[$\mathrm{(a1)}$] we have that 
  \begin{align*}
\pdr[\mathcal{X}][\mathcal{A}\cap\mathcal{X}] & =\pdr[\mathcal{A\cap\mathcal{X}}][\mathcal{A}\cap\mathcal{X}] & =\coresdimx{\mathcal{B}\cap\mathcal{X}}{\mathcal{A}\cap\mathcal{X}}\\
 & =\coresdimr{\mathcal{B}}{\mathcal{X}}{\mathcal{X}} & =\coresdimr{\mathcal{B}}{\mathcal{A}\cap\mathcal{X}}{\mathcal{X}}\\
 & =\coresdimx{\omega}{\mathcal{A}\cap\mathcal{X}} & =\coresdimx{\mathcal{B}\cap\mathcal{X}}{\mathcal{A}\cap\mathcal{X}}\\
 & =\coresdimr{\mathcal{B}\cap\mathcal{X}}{\mathcal{A}\cap\mathcal{X}}{\mathcal{X}} & =\coresdimr{\mathcal{B}\cap\mathcal{X}}{\mathcal{X}}{\mathcal{X}}\mbox{;}
\end{align*}
  
  \item[$\mathrm{(a2)}$] $\pdr[\mathcal{A}\cap\mathcal{X}][\mathcal{A}\cap\mathcal{X}]<\infty$
if, and only if, $\mathcal{A}\cap\mathcal{X}\subseteq\omega^{\vee}$
and $\pdr[\mathcal{X}][\omega]<\infty.$ Moreover, for $\pdr[\mathcal{A}\cap\mathcal{X}][\mathcal{A}\cap\mathcal{X}]<\infty,$  we have that
 \begin{center}
$\mathcal{X}\subseteq(\mathcal{X},\mathcal{B})^{\vee}\subseteq\left(\mathcal{B}\cap\mathcal{X}\right)^{\vee}\mbox{ and }\pdr[\mathcal{X}][\mathcal{A}\cap\mathcal{X}]=\pdr[\mathcal{X}][\omega].$
  \end{center}
  \end{itemize}

\item[$\mathrm{(b)}$]  $\omega={}^\perp\left(\B\cap\mathcal{X}\right)\cap\B\cap\mathcal{X}=\B\cap\mathcal{X}\cap\omega^{\vee}=(\B\cap\mathcal{X},\omega)^{\vee}.$ Furthermore, 
  \begin{itemize}
  \item[$\mathrm{(b1)}$] we have that 
  \begin{align*}
\idr[\mathcal{X}][\B\cap\mathcal{X}] & =\idr[\mathcal{\B\cap\mathcal{X}}][\B\cap\mathcal{X}] & =\resdimx{\A\cap\mathcal{X}}{\B\cap\mathcal{X}}\\
 & =\resdimr{\A}{\mathcal{X}}{\mathcal{X}} & =\resdimr{\A}{\B\cap\mathcal{X}}{\mathcal{X}}\\
 & =\resdimx{\omega}{\B\cap\mathcal{X}} & =\resdimx{\A\cap\mathcal{X}}{\B\cap\mathcal{X}}\\
 & =\resdimr{\A\cap\mathcal{X}}{\B\cap\mathcal{X}}{\mathcal{X}} & =\resdimr{\A\cap\mathcal{X}}{\mathcal{X}}{\mathcal{X}}\mbox{;}
\end{align*}
  
  \item[$\mathrm{(b2)}$] $\idr[\B\cap\mathcal{X}][\B\cap\mathcal{X}]<\infty$
if, and only if, $\B\cap\mathcal{X}\subseteq\omega^{\wedge}$
and $\idr[\mathcal{X}][\omega]<\infty.$ Moreover, for $\idr[\B\cap\mathcal{X}][\B\cap\mathcal{X}]<\infty,$  we have that
 \begin{center}
$\mathcal{X}\subseteq(\A,\mathcal{X})^{\wedge}\subseteq\left(\A\cap\mathcal{X}\right)^{\wedge}\mbox{ and }\idr[\mathcal{X}][\B\cap\mathcal{X}]=\idr[\mathcal{X}][\omega].$
  \end{center}
  \end{itemize}
\end{itemize}
\end{thm}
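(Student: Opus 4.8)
The plan is to prove (a) in full and to deduce (b) by passing to the opposite category, where the hypotheses ``$\mathcal{X}$-complete'' and ``$\mathcal{X}$-hereditary'' are self-dual and $\omega=\mathcal{A}\cap\mathcal{B}\cap\mathcal{X}$ is symmetric in $\mathcal{A}$ and $\mathcal{B}$. First I would record two facts about $\omega$. Since $\mathcal{A}$, $\mathcal{B}$, $\mathcal{X}$ are closed under extensions and direct summands, so is $\omega$; being closed under extensions and summands it absorbs finite biproducts, so $\add(\omega)=\omega$. Moreover, by Proposition \ref{pro:(5.7homologia_relativa)}(a), $\omega$ is an $\mathcal{A}\cap\mathcal{X}$-injective relative cogenerator in $\mathcal{A}\cap\mathcal{X}$. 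Applying Lemma \ref{lem:(5.1homologia_relativa)}(b) to the pair $(\mathcal{A}\cap\mathcal{X},\omega)$ then yields at once the first display of (a), namely $\omega=(\mathcal{A}\cap\mathcal{X})^{\bot}\cap\mathcal{A}\cap\mathcal{X}=\mathcal{A}\cap\mathcal{X}\cap\omega^{\wedge}=(\omega,\mathcal{A}\cap\mathcal{X})^{\wedge}$. In particular $(\mathcal{A}\cap\mathcal{X})^{\bot}\cap\mathcal{A}\cap\mathcal{X}=\omega\subseteq\mathcal{B}\cap\mathcal{X}$, which is precisely the containment needed to invoke Corollary \ref{cor:(4.9homologia_relativa)}(b).

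For (a1) I would first turn the two projective dimensions into injective ones via Lemma \ref{lem:(4.1homologia_relativa)}: $\pdr[\mathcal{X}][\mathcal{A}\cap\mathcal{X}]=\idr[\mathcal{A}\cap\mathcal{X}][\mathcal{X}]$ and $\pdr[\mathcal{A}\cap\mathcal{X}][\mathcal{A}\cap\mathcal{X}]=\idr[\mathcal{A}\cap\mathcal{X}][\mathcal{A}\cap\mathcal{X}]$. Corollary \ref{cor:(4.9homologia_relativa)}(b) gives, for every $X\in\mathcal{X}$, the termwise equalities $\coresdimr{\mathcal{B}}{X}{\mathcal{X}}=\coresdimr{\mathcal{B}\cap\mathcal{X}}{X}{\mathcal{X}}=\idr[\mathcal{A}\cap\mathcal{X}][X]=\coresdimx{\mathcal{B}\cap\mathcal{X}}{X}$. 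Taking the supremum over $\mathcal{X}$ collapses all the $\mathcal{X}$-indexed entries $\coresdimr{\mathcal{B}}{\mathcal{X}}{\mathcal{X}}$ and $\coresdimr{\mathcal{B}\cap\mathcal{X}}{\mathcal{X}}{\mathcal{X}}$ to $\idr[\mathcal{A}\cap\mathcal{X}][\mathcal{X}]$, while taking the supremum over $\mathcal{A}\cap\mathcal{X}$ collapses the entries $\coresdimr{\mathcal{B}}{\mathcal{A}\cap\mathcal{X}}{\mathcal{X}}$, $\coresdimr{\mathcal{B}\cap\mathcal{X}}{\mathcal{A}\cap\mathcal{X}}{\mathcal{X}}$ and $\coresdimx{\mathcal{B}\cap\mathcal{X}}{\mathcal{A}\cap\mathcal{X}}$ to $\idr[\mathcal{A}\cap\mathcal{X}][\mathcal{A}\cap\mathcal{X}]$. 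The remaining entry $\coresdimx{\omega}{\mathcal{A}\cap\mathcal{X}}$ equals $\idr[\mathcal{A}\cap\mathcal{X}][\mathcal{A}\cap\mathcal{X}]$ termwise by Lemma \ref{lem:(5.3homologia_relativa)}(a,b) for $(\mathcal{A}\cap\mathcal{X},\omega)$: part (a) forces $\idr[\mathcal{A}\cap\mathcal{X}][M]=\infty$ exactly when $\coresdimx{\omega}{M}=\infty$, and part (b) handles the finite case. Thus the whole chain reduces to the single equality $\idr[\mathcal{A}\cap\mathcal{X}][\mathcal{X}]=\idr[\mathcal{A}\cap\mathcal{X}][\mathcal{A}\cap\mathcal{X}]$.

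This last equality is the crux and the main obstacle. The inequality $\geq$ is trivial from $\mathcal{A}\cap\mathcal{X}\subseteq\mathcal{X}$. For $\leq$, the naive route through a \emph{right} $\mathcal{X}$-complete sequence $0\to X\to B\to A\to 0$ only produces the bound $\idr[\mathcal{A}\cap\mathcal{X}][X]\leq\idr[\mathcal{A}\cap\mathcal{X}][\mathcal{A}\cap\mathcal{X}]+1$, which is off by one. The fix is to use \emph{left} $\mathcal{X}$-completeness instead: given $X\in\mathcal{X}$, choose an exact sequence $\suc[B'][A'][X]$ with $A'\in\mathcal{A}\cap\mathcal{X}$ and $B'\in\mathcal{B}\cap\mathcal{X}$, and apply $\Hom_{\mathcal{C}}(A,-)$ for $A\in\mathcal{A}\cap\mathcal{X}$. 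Since the pair is $\mathcal{X}$-hereditary, $\Ext^{k+1}_{\mathcal{C}}(A,B')=0$ for all $k\geq 1$, so the long exact sequence yields epimorphisms $\Ext^{k}_{\mathcal{C}}(A,A')\twoheadrightarrow\Ext^{k}_{\mathcal{C}}(A,X)$ for all $k\geq 1$; hence $\idr[\mathcal{A}\cap\mathcal{X}][X]\leq\idr[\mathcal{A}\cap\mathcal{X}][A']\leq\idr[\mathcal{A}\cap\mathcal{X}][\mathcal{A}\cap\mathcal{X}]$. Resolving $X$ on the left (with kernel in $\mathcal{B}\cap\mathcal{X}$) is exactly what kills the connecting map and removes the spurious $+1$. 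This closes (a1).

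For (a2), set $n:=\pdr[\mathcal{A}\cap\mathcal{X}][\mathcal{A}\cap\mathcal{X}]$. If $n<\infty$ then by (a1) $\coresdimx{\omega}{M}\leq n$ for every $M\in\mathcal{A}\cap\mathcal{X}$, so $\mathcal{A}\cap\mathcal{X}\subseteq\omega^{\vee}$, and $\pdr[\mathcal{X}][\omega]\leq\pdr[\mathcal{X}][\mathcal{A}\cap\mathcal{X}]=n<\infty$ because $\omega\subseteq\mathcal{A}\cap\mathcal{X}$. Conversely, assuming $\mathcal{A}\cap\mathcal{X}\subseteq\omega^{\vee}$ and $\pdr[\mathcal{X}][\omega]=m<\infty$, each $M\in\mathcal{A}\cap\mathcal{X}$ admits a finite $\omega$-coresolution $\suc[M][W_{0}\to\cdots][W_{r}]$; splitting it into short exact sequences and iterating the dual of Lemma \ref{lem:(4.7/51homologia_relativa)} gives $\pdr[\mathcal{X}][M]\leq\max_{i}\{\pdr[\mathcal{X}][W_{i}]-i\}\leq m$, whence $\pdr[\mathcal{X}][\mathcal{A}\cap\mathcal{X}]\leq m<\infty$ and therefore $n=\pdr[\mathcal{A}\cap\mathcal{X}][\mathcal{A}\cap\mathcal{X}]\leq\pdr[\mathcal{X}][\mathcal{A}\cap\mathcal{X}]<\infty$; the same estimate, together with $\omega\subseteq\mathcal{A}\cap\mathcal{X}$, proves $\pdr[\mathcal{X}][\mathcal{A}\cap\mathcal{X}]=\pdr[\mathcal{X}][\omega]$. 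Finally, when $n<\infty$, (a1) gives $\coresdimr{\mathcal{B}}{X}{\mathcal{X}}<\infty$ for every $X\in\mathcal{X}$, i.e. $\mathcal{X}\subseteq\mathcal{X}\cap\mathcal{B}_{\mathcal{X}}^{\vee}=(\mathcal{X},\mathcal{B})^{\vee}$; and since $\mathcal{X}$ is closed under extensions, every term of a finite $\mathcal{B}_{\mathcal{X}}$-coresolution of an object of $\mathcal{X}$ lies in $\mathcal{B}\cap\mathcal{X}$, so $(\mathcal{X},\mathcal{B})^{\vee}\subseteq(\mathcal{B}\cap\mathcal{X})^{\vee}$. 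Statement (b) then follows from (a) by duality, and the decisive point throughout is the use of left (rather than right) $\mathcal{X}$-completeness to overcome the off-by-one gap in (a1).
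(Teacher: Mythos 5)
Your proof is correct and follows essentially the same route as the paper's: (a) via Proposition \ref{pro:(5.7homologia_relativa)}(a) and Lemma \ref{lem:(5.1homologia_relativa)}(b), (a1) via Corollary \ref{cor:(4.9homologia_relativa)}(b) together with Lemma \ref{lem:(5.3homologia_relativa)} for the $\omega$-entry, (a2) via the dimension shift along finite $\omega$-coresolutions, and (b) by duality. The ``left-completeness fix'' you present as the crux, namely $\idr[\mathcal{A}\cap\mathcal{X}][\mathcal{X}]=\idr[\mathcal{A}\cap\mathcal{X}][\mathcal{A}\cap\mathcal{X}]$, is precisely the computation behind (the dual of) Lemma \ref{thm:(4.8homologia_relativa)}(c), which the paper cites at that point rather than re-deriving.
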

\begin{proof} Note firstly that (b) is the dual of (a). Thus, we need to prove (a).
\

By Proposition \ref{pro:(5.7homologia_relativa)} (a), $\omega$ is an 
$(\mathcal{A}\cap\mathcal{X})$-injective
relative cogenerator in $\mathcal{A}\cap\mathcal{X}$. Then, by Lemma \ref{lem:(5.1homologia_relativa)} (b),
$\omega$ satisfies the desired equalities of (a). Let us show the statements of
(a1) and (a2). 
\

(a1) By Corollary \ref{cor:(4.9homologia_relativa)}, we have 
\begin{alignat*}{1}
\idr[\mathcal{A}\cap\mathcal{X}][\mathcal{X}] & =\coresdimr{\mathcal{B}}{\mathcal{X}}{\mathcal{X}}=\coresdimr{\mathcal{B}\cap\mathcal{X}}{\mathcal{X}}{\,}=\coresdimr{\mathcal{B}\cap\mathcal{X}}{\mathcal{X}}{\mathcal{X}}\mbox{ and }\\
\idr[\mathcal{A}\cap\mathcal{X}][\mathcal{A}'] & =\coresdimr{\mathcal{B}}{\mathcal{A}'}{\mathcal{X}}=\coresdimr{\mathcal{B}\cap\mathcal{X}}{\mathcal{A}'}{\,}=\coresdimr{\mathcal{B}\cap\mathcal{X}}{\mathcal{A}'}{\mathcal{X}},
\end{alignat*}
where $\mathcal{A}':=\mathcal{A}\cap\mathcal{X}$. On the other hand,
by Lemma \ref{thm:(4.8homologia_relativa)} (c) and Lemma \ref{lem:(4.1homologia_relativa)},
\[
\idr[\mathcal{A}\cap\mathcal{X}][\mathcal{X}]=\pdr[\mathcal{X}][\mathcal{A}\cap\mathcal{X}]=\pdr[\mathcal{A}\cap\mathcal{X}][\mathcal{A}\cap\mathcal{X}]=\idr[\mathcal{A}\cap\mathcal{X}][\mathcal{A}\cap\mathcal{X}].
\]
Now, since $\omega\subseteq\mathcal{B}\cap\mathcal{X}$, we have 
\[
\coresdimx{\mathcal{B}\cap\mathcal{X}}{\mathcal{\mathcal{A}\cap X}}\leq\coresdimx{\omega}{\mathcal{\mathcal{A}\cap X}}\mbox{.}
\]
We claim that $\coresdimx{\omega}{\mathcal{\mathcal{A}\cap X}}\leq\idr[\mathcal{A}\cap\mathcal{X}][\mathcal{A}\cap\mathcal{X}]$.
To show it, we can assume $\idr[\mathcal{A}\cap\mathcal{X}][\mathcal{A}\cap\mathcal{X}]<\infty$.
By Proposition \ref{pro:(5.7homologia_relativa)} (a), we can apply 
Lemma \ref{lem:(5.3homologia_relativa)}
to the pair $(\mathcal{A}\cap\mathcal{X},\omega)$ and since $\idr[\mathcal{A}\cap\mathcal{X}][\mathcal{A}\cap\mathcal{X}]<\infty$,
it follows that 
\[
\mathcal{A}\cap\mathcal{X}=\left\{ Z\in\mathcal{A}\cap\mathcal{X}\,|\:\idr[\mathcal{A}\cap\mathcal{X}][Z]<\infty\right\} =\mathcal{A}\cap\mathcal{X}\cap\omega^{\vee}\subseteq\omega^{\vee}\mbox{ }
\]
and  $\idr[\mathcal{A}\cap\mathcal{X}][\mathcal{A}\cap\mathcal{X}]=\coresdimr{\omega}{\mathcal{A}\cap\mathcal{X}}{\,}\mbox{.}$
Hence
\[
\coresdimr{\mathcal{B}\cap\mathcal{X}}{\mathcal{A}'}{\,}\leq\coresdimr{\omega}{\mathcal{A}'}{\,}\leq\idr[\mathcal{A}\cap\mathcal{X}][\mathcal{A}']=\coresdimr{\mathcal{B}\cap\mathcal{X}}{\mathcal{A}'}{\,}
\]
and thus $\coresdimr{\mathcal{B}\cap\mathcal{X}}{\mathcal{A}\cap\mathcal{X}}{\,}=\coresdimr{\omega}{\mathcal{A}\cap\mathcal{X}}{\,};$ proving (a1).
\

(a2) $(\Rightarrow)$ Let $\pdr[\mathcal{A}\cap\mathcal{X}][\mathcal{A}\cap\mathcal{X}]<\infty.$ Then, by (a1)
$\coresdimr{\mathcal{B}}{\mathcal{X}}{\mathcal{X}}<\infty$
 and thus $\mathcal{X}\subseteq(\mathcal{X},\mathcal{B}){}^{\vee}\subseteq(\mathcal{B}\cap\mathcal{X})^{\vee}$
since $\mathcal{X}$ is closed under extensions. Moreover, in the proof
of (a1), we showed that $\mathcal{A}\cap\mathcal{X}\subseteq\omega^{\vee}$.
Then, by (a1) and Proposition \ref{pro:(5.7homologia_relativa)} (e), we have 
$\pdr[\mathcal{X}][\omega]=\idr[\omega][\mathcal{X}]=\idr[\mathcal{A}\cap\mathcal{X}][\mathcal{X}]=\pdr[\mathcal{X}][\mathcal{A}\cap\mathcal{X}]\mbox{.}$
\

$(\Leftarrow)$ Let $\mathcal{A}\cap\mathcal{X}\subseteq\omega^{\vee}$
and $\pdr[\mathcal{X}][\omega]=n<\infty$. Since $\omega\subseteq\mathcal{A}\cap\mathcal{X}\subseteq\omega^{\vee}$,
by (a1) and Lemma \ref{lem:(4.10homologia_relativa)}, we have $\pdr[\mathcal{X}][\omega]=\pdr[\mathcal{X}][\omega^{\vee}]\geq\pdr[\mathcal{X}][\mathcal{A}\cap\mathcal{X}]=\pdr[\mathcal{A}\cap\mathcal{X}][\mathcal{A}\cap\mathcal{X}].$
\end{proof}

\section{The class $\protect\Gennr[M][n][\mathcal{X}]$}

In the theory of infinitely generated tilting modules of finite projective
dimension, 
S. Bazzoni
 \cite{Bazzonintilting} and 
 J. Wei
  \cite{Wei}
 presented the class $\operatorname{Gen}_{n}(M)$ as a tool in the characterization of tilting modules. The goal of this section
is to present a generalization of such class, and to review some basic
properties, which will be used in  \cite{Argudin-Mendoza2} to characterize when a class $\mathcal{T}\subseteq\C$ is $n$-$\X$-tilting. 

\begin{defn}\label{def:gennr} Let $\mathcal{C}$ be an abelian category, $n\geq1$
and $\mathcal{X},\mathcal{T}\subseteq\mathcal{C}$. We denote by \textbf{$\Gennr$}
the class of all the objects $C\in\mathcal{C}$ admitting an exact sequence
\[
0\rightarrow K\rightarrow T_{n}\xrightarrow{f_{n}}T_{n-1}\to\cdots\to T_2\xrightarrow{f_{2}}T_{1}\xrightarrow{f_{1}}C\rightarrow 0\mbox{,}
\]
with $\Kerx[f_{i}]\in\mathcal{X}$ and $T_{i}\in\mathcal{T}\cap\mathcal{X}$
$\forall\, i\in[1,n]$.

We also define $\operatorname{Gen}_{n}^{\mathcal{X}}(\mathcal{T}):=\Gennr[\mathcal{T}^{\oplus}][n][\mathcal{X}]$
and $\operatorname{gen}_{n}^{\mathcal{X}}(\mathcal{T}):=\Gennr[\mathcal{T}^{\oplus_{<\infty}}][n][\mathcal{X}]$.
For an object $T\in\mathcal{C}$, we define $\operatorname{Gen}_{n}^{\mathcal{X}}(T):=\operatorname{Gen}_{n}^{\mathcal{X}}(\operatorname{Add}(T))$
and $\operatorname{gen}_{n}^{\mathcal{X}}(T):=\operatorname{gen}_{n}^{\mathcal{X}}(\operatorname{add}(T))$.
If $\mathcal{X}=\mathcal{C}$, we set $\Genn[\mathcal{T}]:=\Gennr[\mathcal{T}][][\mathcal{C}]$,
$\operatorname{Gen}_{n}(\mathcal{T}):=\operatorname{Gen}_{n}^{\mathcal{C}}(\mathcal{T})$
and $\operatorname{gen}_{n}(\mathcal{T}):=\operatorname{gen}_{n}^{\mathcal{C}}(\mathcal{T})$.
\end{defn}

The following result is a generalization of \cite[Prop. 3.7]{Wei}.

\begin{pro}\label{pro:Genn cerrado por n-cocientes} Let $\mathcal{C}$ be an
abelian category, $\mathcal{T}\subseteq\mathcal{C},$ $\mathcal{X}=\smdx[\mathcal{X}]\subseteq\mathcal{C}$
be closed under extensions, $\Gennr\cap\mathcal{X}$ be closed
under extensions and let
$\Gennr\cap\mathcal{X}=\Gennr[\mathcal{T}][n+1]\cap\mathcal{X}.$ Then
\begin{center}
 \textup{$\mbox{Fac}_{k}^{\mathcal{X}}(\Gennr)\cap\mathcal{X}=\Gennr[][k]\cap\mathcal{X}\mbox{ }\forall k\geq1$}
 \end{center}
and $\Gennr\cap\mathcal{X}$ is closed by $n$-quotients
in $\mathcal{X}$. 
\end{pro}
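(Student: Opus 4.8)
Throughout write $\mathcal{F}_{k}:=\Gennr[\mathcal{T}][k]\cap\mathcal{X}$, so that $\mathcal{G}=\Gennr\cap\mathcal{X}=\mathcal{F}_{n}$, and set $\mathcal{F}_{0}:=\mathcal{X}$. The plan is to prove the displayed equality $\Gennr[\mathcal{G}][k]\cap\mathcal{X}=\mathcal{F}_{k}$ for all $k\geq1$ by induction on $k$; the closure by $n$-quotients is then the special case $k=n$, since an exact sequence as in the definition of closed by $n$-quotients exhibits its right end term $B$ as an object of $\Gennr[\mathcal{G}][n]\cap\mathcal{X}=\mathcal{F}_{n}=\mathcal{G}$. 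First I would record a \emph{recursive description}: splitting the defining sequence of an object of $\Gennr[\mathcal{T}][k]$ into short exact sequences (all the kernels $\Ker(f_{i})$ lie in $\mathcal{X}$), one checks that for $k\geq1$ an object $C\in\mathcal{X}$ lies in $\mathcal{F}_{k}$ if and only if there is an exact sequence $\suc[C'][T][C]$ with $T\in\mathcal{T}\cap\mathcal{X}$ and $C'\in\mathcal{F}_{k-1}$. The forward implication is the top short exact sequence of a finite $\mathcal{T}_{\mathcal{X}}$-coresolution read as a resolution; the converse splices a resolution of $C'$ beneath $T$, the new kernel $\Ker(T\to C)$ being an extension of two objects of $\mathcal{X}$ and so in $\mathcal{X}$. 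The same description holds verbatim for $\Gennr[\mathcal{G}][k]\cap\mathcal{X}$ because $\mathcal{G}\subseteq\mathcal{X}$.

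Two preliminaries would then be isolated. (i) $\mathcal{T}\cap\mathcal{X}\subseteq\mathcal{F}_{k}$ for every $k$: for $T\in\mathcal{T}\cap\mathcal{X}$ the sequence of length $k$ all of whose terms equal $T$ and whose structure maps alternate between $\id_{T}$ and $0$ is exact and is a $\mathcal{T}_{\mathcal{X}}$-resolution, since each kernel is $0$ or $T$, hence in $\mathcal{X}$. In particular $\mathcal{T}\cap\mathcal{X}\subseteq\mathcal{G}$, which gives at once the easy inclusion $\mathcal{F}_{k}\subseteq\Gennr[\mathcal{G}][k]\cap\mathcal{X}$, every $\mathcal{T}_{\mathcal{X}}$-resolution being a $\mathcal{G}_{\mathcal{X}}$-resolution. (ii) Every $G\in\mathcal{G}$ admits a \emph{good cover}, i.e. an exact sequence $\suc[G'][T][G]$ with $T\in\mathcal{T}\cap\mathcal{X}$ and $G'\in\mathcal{G}$: indeed $G\in\mathcal{G}=\mathcal{F}_{n}=\mathcal{F}_{n+1}$ by hypothesis, and the recursive description at level $n+1$ produces such a sequence with $G'\in\mathcal{F}_{n}=\mathcal{G}$. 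This is the one place where the stabilization hypothesis $\Gennr\cap\mathcal{X}=\Gennr[\mathcal{T}][n+1]\cap\mathcal{X}$ enters.

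The reverse inclusion $\Gennr[\mathcal{G}][k]\cap\mathcal{X}\subseteq\mathcal{F}_{k}$ is the heart of the matter, and the main obstacle is that the intermediate classes $\mathcal{F}_{j}$ with $j<n$ are not known to be closed under extensions — only $\mathcal{G}=\mathcal{F}_{n}$ is, by hypothesis (so a naive horseshoe argument is unavailable, all the more since $\mathcal{T}$ need not be closed under sums). To circumvent this I would prove, by induction on $j\geq0$, the auxiliary statement $(\ast_{j})$: for every exact sequence $\suc[A][W][B]$ with $A\in\mathcal{G}$, $B\in\mathcal{F}_{j}$ and $W\in\mathcal{X}$ one has $W\in\mathcal{F}_{j}$. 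The base $j=0$ is closure of $\mathcal{X}$ under extensions. For the step, resolve $B\in\mathcal{F}_{j}$ as $\suc[B'][S][B]$ with $S\in\mathcal{T}\cap\mathcal{X}$ and $B'\in\mathcal{F}_{j-1}$, and form the pullback $Q$ of $W\to B\leftarrow S$. This yields $\suc[A][Q][S]$, whence $Q\in\mathcal{G}$ because $\mathcal{G}$ is closed under extensions and $S\in\mathcal{T}\cap\mathcal{X}\subseteq\mathcal{G}$; and it yields $\suc[B'][Q][W]$. A good cover $\suc[K][T][Q]$ of $Q$, with $K\in\mathcal{G}$, composed with $Q\to W$ gives an exact sequence $\suc[W''][T][W]$ whose kernel $W''$ sits in $\suc[K][W''][B']$; thus $W''\in\mathcal{X}$, and $(\ast_{j-1})$ applied to this last sequence gives $W''\in\mathcal{F}_{j-1}$. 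The recursive description then yields $W\in\mathcal{F}_{j}$, completing the induction on $j$.

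Finally, the reverse inclusion follows by induction on $k$. Given $C\in\Gennr[\mathcal{G}][k]\cap\mathcal{X}$, the recursive description provides $\suc[Z][G][C]$ with $G\in\mathcal{G}$ and $Z\in\Gennr[\mathcal{G}][k-1]\cap\mathcal{X}$, which equals $\mathcal{F}_{k-1}$ by the inductive hypothesis. Composing a good cover of $G$ with $G\to C$ produces $\suc[W][T][C]$ with $T\in\mathcal{T}\cap\mathcal{X}$ and $W$ fitting into $\suc[K_{0}][W][Z]$ with $K_{0}\in\mathcal{G}$ and $Z\in\mathcal{F}_{k-1}$; by $(\ast_{k-1})$ we get $W\in\mathcal{F}_{k-1}$, and the recursive description gives $C\in\mathcal{F}_{k}$. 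Together with the easy inclusion of (i) this establishes the equality for all $k$, and the closure by $n$-quotients as the case $k=n$, proving the proposition.
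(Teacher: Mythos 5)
Your proof is correct, and it rests on the same three ingredients as the paper's: the inclusion $\mathcal{T}\cap\mathcal{X}\subseteq\operatorname{Fac}_{k}^{\mathcal{X}}(\mathcal{T})\cap\mathcal{X}$, the ``good covers'' extracted from the stabilization $\operatorname{Fac}_{n}^{\mathcal{X}}(\mathcal{T})\cap\mathcal{X}=\operatorname{Fac}_{n+1}^{\mathcal{X}}(\mathcal{T})\cap\mathcal{X}$, and pullback squares combined with the extension-closure of $\mathcal{G}:=\operatorname{Fac}_{n}^{\mathcal{X}}(\mathcal{T})\cap\mathcal{X}$. The only genuine structural difference is where the induction lives. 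The paper runs a single induction on $k$: in the step it forms the same two pullbacks you do, but at the point where it must show that the first pullback object (an extension, inside $\mathcal{X}$, of an object of $\mathcal{F}_{k}$ by an object of $\mathcal{G}$) again lies in $\mathcal{F}_{k}$, it exhibits that object as an element of $\operatorname{Fac}_{k}^{\mathcal{X}}(\mathcal{G})\cap\mathcal{X}$ and quotes the inductive hypothesis, i.e.\ the displayed equality at level $k$. You instead isolate exactly this closure property as the auxiliary statement $(\ast_{j})$ and prove it by its own induction on $j$, independently of the main equality. Your organization makes the mechanism more transparent --- it names the precise weak form of extension-closure that the intermediate classes $\mathcal{F}_{j}$ do enjoy, compensating for the fact that they are not assumed closed under extensions --- at the cost of an extra layer of induction; the paper's version is shorter but entangles the two inductions. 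The underlying diagram chases coincide, and your reduction of the $n$-quotients claim to the case $k=n$ of the displayed equality is exactly what the paper does.
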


\begin{proof}
Since $\mathcal{T}\cap\mathcal{X}\subseteq\Gennr\cap\mathcal{X}$,
it follows that 
\[
\mbox{Fac}_{k}^{\mathcal{X}}(\Gennr)\cap\mathcal{X}\supseteq\Gennr[][k]\cap\mathcal{X}\mbox{.}
\]
 Hence, we need to show that $\mbox{Fac}_{k}^{\mathcal{X}}(\Gennr)\cap\mathcal{X}\subseteq\Gennr[][k]\cap\mathcal{X}$.
We proceed by induction on $k$.

Let $k=1$ and $X\in \mbox{Fac}_{k}^{\mathcal{X}}(\Gennr)\cap\mathcal{X}.$ Then, there is an exact sequence 
\[
\suc[K][M'][X][f]\mbox{ with }X,K\in\mathcal{X}\mbox{ and }M'\in\Gennr\cap\mathcal{X}.
\]
Moreover, by using that $\Gennr\cap\mathcal{X}\subseteq\Gennr[][1]\cap\mathcal{X},$ there is an exact sequence 
\[
\suc[K_{1}][M_{1}][M'][][g]\mbox{ with }M_{1}\in\mathcal{T}\cap\mathcal{X}\mbox{ and }K_{1}\in\mathcal{X}\mbox{.}
\]
By taking the pull-back of $f$ and $g$ and since $\mathcal{X}$
is closed under extensions, we get that $X\in\Gennr[\mathcal{T}][1][\mathcal{X}]\cap\mathcal{X}$.
Therefore $\mbox{Fac}_{1}^{\mathcal{X}}(\Gennr)\cap\mathcal{X}\subseteq\Gennr[][1]\cap\mathcal{X}.$
\

Let $k>1$ and $M\in \mbox{Fac}_{k+1}^{\mathcal{X}}(\Gennr)\cap\mathcal{X}.$ Then, there is an exact sequence 
\[
0\rightarrow K\rightarrow C_{k+1}\overset{f_{k+1}}{\rightarrow}C_{k}\rightarrow...\rightarrow C_{1}\overset{f_{1}}{\rightarrow}M\rightarrow 0,
\]
 with $M\in\mathcal{X}$, $C_{i}\in\Gennr[\mathcal{T}][n]\cap\mathcal{X}$
and $\Kerx[f_{i}]\in\mathcal{X}$ $\forall\, i\in[1,k+1]$. Observe that
\[
M_{1}:=\Kerx[f_{1}]\in\mbox{Fac}_{k}^{\mathcal{X}}(\Gennr)\cap\mathcal{X}\mbox{.}
\]
 By inductive hypothesis $M_{1}\in\Gennr[][k]\cap\mathcal{X},$ and thus, there  is an exact sequence 
\begin{center} 
 $0\to M_{1}\xrightarrow{i}C_{1}\xrightarrow{f_1}M\to 0,$ with $M_{1}\in\Gennr[][k]\cap\mathcal{X}$  and $C_{1}\in\Gennr\cap\mathcal{X}.$
 \end{center}

On the other hand, since $\Gennr\cap\mathcal{X}=\Gennr[][n+1]\cap\mathcal{X}$
and $C_{1}\in\Gennr\cap\mathcal{X}$, \\
\begin{minipage}[t]{0.55\columnwidth}%
there is an exact sequence 
\[
0\rightarrow C'\rightarrow T_{1}\overset{p}{\rightarrow}C_{1}\rightarrow 0,
\]
 with $T_{1}\in\mathcal{T}\cap\mathcal{X}$ and $C'\in\Gennr[\mathcal{T}][n][\mathcal{X}]\cap\mathcal{X}$.
Now, by taking the pull-back of $i$ and $p$, we get an exact sequence
\[
\suc[C'][Y][M_{1}][\,][p']\mbox{;}
\]
and since $M_{1}\in\Gennr[][k]$, we have an exact sequence 
\[
\suc[M'_{1}][T'_{1}][M_{1}][\,][p''],
\]
\end{minipage}\hfill{}%
\fbox{\begin{minipage}[t]{0.4\columnwidth}%
\[
\begin{tikzpicture}[-,>=to,shorten >=1pt,auto,node distance=.9cm,main node/.style=,x=1.5cm,y=1.5cm]

   \node[main node] (M1) at (0,0)      {$M_1$};
   \node[main node] (C1) [right of=M1]  {$C_1$};
   \node[main node] (MA) [right of=C1]  {$M$};

   \node[main node] (Y) [above of=M1]  {$Y$};
   \node[main node] (T1) [right of=Y]  {$T_1$};
   \node[main node] (MB) [right of=T1]  {$M$};

   \node[main node] (CA) [above of=Y]  {$C'$};
   \node[main node] (CB) [above of=T1]  {$C'$};

   \node[main node] (01) [above of=CA]    {$0$};
   \node[main node] (02) [above of=CB]    {$0$};

   \node[main node] (03) [left of=Y]    {$0$};
   \node[main node] (04) [right of=MB]    {$0$};

   \node[main node] (05) [left of=M1]    {$0$};
   \node[main node] (06) [right of=MA]    {$0$};

   \node[main node] (07) [below of=M1]    {$0$};
   \node[main node] (08) [below of=C1]    {$0$};

\draw[->, thin]   (M1)  to node  {$$}  (C1);
\draw[->, thin]   (C1)  to node  {$$}  (MA);
\draw[->, thin]   (05)  to node  {$$}  (M1);
\draw[->, thin]   (MA)  to node  {$$}  (06);

\draw[->, thin]   (Y)  to node  {$$}  (T1);
\draw[->, thin]   (T1)  to node  {$$}  (MB);
\draw[->, thin]   (03)  to node  {$$}  (Y);
\draw[->, thin]   (MB)  to node  {$$}  (04);

\draw[->, thin]   (01)  to node  {$$}  (CA);
\draw[->, thin]   (CA)  to  node  {$$}  (Y);
\draw[->, thin]   (Y)  to  node   {$$}  (M1);
\draw[->, thin]   (M1)  to  node   {$$}  (07);

\draw[-, double]   (CA)  to  node  {$$}  (CB);

\draw[->, thin]   (02)  to  node   {$$}  (CB);
\draw[->, thin]   (CB)  to  node   {$$}  (T1);
\draw[->, thin]   (T1)  to  node   {$$}  (C1);
\draw[->, thin]   (C1)  to  node   {$$}  (08);

\draw[-, double]   (MA)  to  node   {$$}  (MB);
   
\end{tikzpicture}
\]%
\end{minipage}}\\
\vspace{0.2cm}

\noindent \makebox[\linewidth]{with $T'_{1}\in\mathcal{T}\cap\mathcal{X}$ and $M'_{1}\in\Gennr[][k-1]\cap\mathcal{X}$.
By taking the pull-back of $p'$ and $p''$,}
\\
\fbox{\begin{minipage}[t]{0.4\columnwidth}%
\[
\begin{tikzpicture}[-,>=to,shorten >=1pt,auto,node distance=.9cm,main node/.style=,x=1.5cm,y=1.5cm]

   \node[main node] (M1) at (0,0)      {$M_1$};
   \node[main node] (Y) [left of=M1]  {$Y$};
   \node[main node] (CA) [left of=Y]  {$C'$};

   \node[main node] (T1) [above of=M1]  {$T'_1$};
   \node[main node] (X) [above of=Y]  {$X$};
   \node[main node] (CB) [left of=X]  {$C'$};

   \node[main node] (MB) [above of=T1]  {$M'_1$};
   \node[main node] (MA) [above of=X]  {$M'_1$};

   \node[main node] (01) [above of=MA]    {$0$};
   \node[main node] (02) [above of=MB]    {$0$};

   \node[main node] (03) [left of=CB]    {$0$};
   \node[main node] (04) [right of=T1]    {$0$};

   \node[main node] (05) [left of=CA]    {$0$};
   \node[main node] (06) [right of=M1]    {$0$};

   \node[main node] (07) [below of=Y]    {$0$};
   \node[main node] (08) [below of=M1]    {$0$};

\draw[->, thin]   (CA)  to node  {$$}  (Y);
\draw[->, thin]   (Y)  to node  {$$}  (M1);
\draw[->, thin]   (M1)  to node  {$$}  (06);
\draw[->, thin]   (05)  to node  {$$}  (CA);

\draw[->, thin]   (CB)  to node  {$$}  (X);
\draw[->, thin]   (X)  to node  {$$}  (T1);
\draw[->, thin]   (T1)  to node  {$$}  (04);
\draw[->, thin]   (03)  to node  {$$}  (CB);

\draw[->, thin]   (01)  to node  {$$}  (MA);
\draw[->, thin]   (MA)  to  node  {$$}  (X);
\draw[->, thin]   (X)  to  node   {$$}  (Y);
\draw[->, thin]   (Y)  to  node   {$$}  (07);

\draw[-, double]   (MA)  to  node  {$$}  (MB);

\draw[->, thin]   (02)  to  node   {$$}  (MB);
\draw[->, thin]   (MB)  to  node   {$$}  (T1);
\draw[->, thin]   (T1)  to  node   {$$}  (M1);
\draw[->, thin]   (M1)  to  node   {$$}  (08);

\draw[-, double]   (CA)  to  node   {$$}  (CB);
   
\end{tikzpicture}
\]%
\end{minipage}}\hfill{}%
\begin{minipage}[t]{0.55\columnwidth}%
we get an exact sequence
\[
\suc[C'][X][T'_{1}][\,][\,],
\]
with $C'\in\Gennr\cap\mathcal{X}$ and 
\[
T'_{1}\in\mathcal{T}\cap\mathcal{X}\subseteq\Gennr\cap\mathcal{X}\mbox{,}
\]
 and thus, $X\in\Gennr\cap\mathcal{X}$. On the other hand, from the
second pull-back we also get an exact sequence 
\[
\suc[M'_{1}][X][Y][\,][\,]\mbox{,}
\]
with   $M'_{1}\in\Gennr[\mathcal{T}][k-1]\cap\mathcal{X}\subseteq\mbox{Fac }_{k-1}^{\mathcal{X}}\left(\Gennr\right)\cap\mathcal{X}$ and  $X\in\Gennr\cap\mathcal{X}$.
\end{minipage}

\noindent  Hence, $Y\in\mbox{Fac }_{k}^{\mathcal{X}}\left(\Gennr\right)\cap\mathcal{X}\subseteq\Gennr[][k]\cap\mathcal{X}$
by inductive hypothesis. Finally, from the first pull-back, we get the exact sequence 
\[
\suc[Y][T_{1}][M][\,][\,]\mbox{,}
\]
where $Y\in\Gennr[][k]\cap\mathcal{X}$ and $T_{1}\in\mathcal{T}\cap\mathcal{X}$.
Therefore $
M\in\Gennr[][k+1]\cap\mathcal{X};$ proving the result. 

Lastly,  the fact that   $\Gennr[][n]\cap\mathcal{X}$ is closed by $n$-quotients follows from the equality $\mbox{Fac}_{n}^{\mathcal{X}}(\Gennr)\cap\mathcal{X}=\Gennr[][n]\cap\mathcal{X}$.
\end{proof}

\bigskip
{\bf Acknowledgements} We thank an anonymous referee whose comments contributed to improve the quality of the paper.

\bibliographystyle{plain}
\bibliography{nXtiltingV9}




\end{document}